\newcommand{\R}{\mathcal{R}}
\newcommand{\sS}{\mathcal{S}}
\newcommand{\sln}{\mathfrak{sl}(n)}
\newcommand{\Ur}{\mathrm{U}(r)}
\newcommand{\U}{\mathrm{U}}
\newcommand{\E}{\mathrm{E}}
\newcommand{\htor}{\mathrm{H}_{\mathrm{T}}}
\newcommand{\thetah}{\prescript{\theta}{}{\mathcal H}}
\newcommand{\nHH}{\prescript{n}{}{\mathrm{H}}}
\newcommand{\uHH}{\prescript{u}{}{\mathcal{H}}}
\newcommand{\KRHn}{\widehat{KRH}_n^\Z}
\newcommand{\KhRn}{\widehat{KhR}_n^\Z}
\newcommand{\KRHu}{\widehat{KRH}_u^\R}
\newcommand{\thetae}{\prescript{\theta}{}{E}}
\newcommand{\bet}{\boldsymbol\beta}
\newcommand{\coe}{\mathfrak{U}}
\newcommand{\fil}{\mathcal{F}}
\newcommand{\BB}{\mathcal{B}}
\newcommand{\sym}{\Z[x_1,\dots,x_k]^W}
\newcommand{\tT}{\mathrm{T}}
\newcommand{\twE}{\widehat{E}}
\newcommand{\HH}{\mathrm{H}}
\newcommand{\BTS}{\mathcal{B}t\mathcal{S}}
\newcommand{\Q}{\mathbb{Q}}
\newcommand{\C}{\mathbb{C}}
\newcommand{\unlink}{\ovoid}
\newcommand{\II}{\mathcal{I}}
\newcommand{\Z}{\mathbb{Z}}
\newcommand{\ctp}[1][]{\,\widehat\otimes_{#1}\,}
\newcommand{\EE}{\mathcal{E}}
\newcommand{\GL}{\mathrm{GL}_1}
\newcommand{\hoch}{\mathsf{HH}}
\newcommand{\gr}{\mathrm{gr}}
\newcommand{\coebs}{\mathfrak{F}}
\newcommand{\sgn}{\mathrm{sgn}}
 \newcommand{\Ch}{\mathsf{Ch}}
  \newcommand{\Br}{\mathrm{Br}}
 \newcommand{\ringD}{\R[\![X_e]\!]}
 \newcommand{\bup}{B\mathrm{U}_+}
  \newcommand{\hcal}{\mathcal{H}}
  \DeclareMathOperator{\assgr}{\mathsf{gr}}
  \DeclareMathOperator{\colim}{colim}
\newtheorem{theorem}{Theorem}
\newtheorem{corollary}{Corollary}
\newtheorem{proposition}{Proposition}
\newtheorem{definition}{Definition}
\newtheorem{lemma}{Lemma}
\theoremstyle{remark}
\newtheorem*{remark}{Remark}
\let\oldtocsection=\tocsection
\let\oldtocsubsection=\tocsubsection
\let\oldtocsubsubsection=\tocsubsubsection
\renewcommand{\tocsection}[2]{\hspace{0em}\oldtocsection{#1}{#2}}
\renewcommand{\tocsubsection}[2]{\hspace{1em}\oldtocsubsection{#1}{#2}}
\renewcommand{\tocsubsubsection}[2]{\hspace{2em}\oldtocsubsubsection{#1}{#2}}
\title{Khovanov-Rozansky homologies, Bott-Samelson spaces and twisted cohomology}
\author{Tom\'as Mej\'ia-G\'omez}
\address{Department of Mathematics, Johns Hopkins University, Baltimore, MD 21218, USA}
\email{mejia@jhu.edu}
\begin{document}

\begin{abstract}
By means of Rasmussen's formulation of Khovanov-Rozansky homology originally given over $\Q$ in \cite{rasmussen}, we compare different flavors of $\sln$-link homology  with the link invariants obtained by Kitchloo in \cite{nitubroken2} via twistings of Borel equivariant cohomology applied to the symmetry breaking spectra. In particular, we see how these geometric constructions based on Bott-Samelson varieties produce equivariant integral $\sln$ link homology with either specialized or universal potential.
\end{abstract}

\maketitle

\tableofcontents

\setstretch{1.1}

\section*{Introduction}

In the papers \cite{krmatrix} and \cite{krmatrix2}, Khovanov and Rozansky introduced link homologies which categorify the classical link invariants known as the $\sln$ polynomial $P_n$ and the HOMFLY-PT polynomial $P_\infty$. These polynomials can be normalized in such a way that, for the unknot $\unlink$, one has
$$P_n(\unlink)(q)=\frac{1-q^{2n}}{1-q^2}\quad\mbox{and}\quad P_\infty(\unlink)(q,z)=\frac{1-z}{1-q^2}.$$ The corresponding Khovanov-Rozansky homologies give, in turn,
$$KRH_n(\unlink)=\frac{\Q[x]}{(x^n)} \quad\mbox{and}\quad KRH_\infty(\unlink)=\Q[x]\otimes\Lambda(\hat x).$$
With an adequate grading $\gr=(\gr_t,\gr_q,\gr_z)$ (ignoring $\gr_z$ for $H_n$) in which $\gr(x)=(2,2,0)$ and $\gr(\hat x)=(1,0,1)$, the link polynomial is recovered by taking Euler characteristic with respect to the degree $\gr_t$. A slight reformulation of the Khovanov-Rozansky complex that computes $KRH_n$ allowed Rasmussen \cite{rasmussen} to find a spectral sequence with $E_1$-page given by HOMFLY-PT homology, and converging to $\sln$ homology. This spectral sequence should be seen as a categorification of the relation $P_n(q)=P_\infty(q,q^{2n})$. In broad terms, HOMFLY-PT homology $KRH_\infty$ can be defined through a complex $C(D)$ obtained from a cube of resolutions of a link diagram $D$, with a \emph{cubical differential} $d_v$. The cubical differential can be `twisted' by adding another differential $d_-$ (depending on $n$) to obtain $\sln$ homology as the homology with respect to the total differential $d_v+d_-$. The Rasmussen spectral sequence is the standard double complex spectral sequence in this setting.

 Rasmussen's work was done over the field $\Q$, but Krasner \cite{krasnerequivariant} extended the coefficients to $\Q[a_0,\dots,a_{n-2}]$ in order to obtain equivariant $\sln$ homology, which takes the form $$KRH^{\Q[a_0,\dots,a_{n-2}]}_p(\unlink)=\frac{\Q[a_0,\dots,a_{n-2}][x]}{(p(x))}$$
at the unknot. Here $p(x)=x^n+a_2x^{n-2}+\cdots+a_1x+a_0$, a sort of degree $n$ monic polynomial with generic coefficients. Over $\Z$, Krasner describes an integral version of the Rasmussen spectral sequence in \cite{krasnerintegral}. By the lack of an a priori integral link invariant, one is led to define $\sln$ homology over $\Z$ as either the $E_\infty$-page or the limiting object of the spectral sequence. Either way, there is a link homology which at the unknot is given by
$$KRH^\Z_n(\unlink)=\frac{\Z[x]}{((n+1)x^n)}.$$
A normalized version of integral $\sln$ homology would remove the integral torsion. This has been achieved through various different routes in the literature. Using known geometric constructions, the present work proposes such a normalization while keeping with the spirit of Rasmussen's double complex.

In order to introduce the spaces of interest, it is convenient to work with a braid presentation for the link $L$. In the positive case, take a multi-index $I=(l_1,\dots,l_n)$ with $0<l_s<r$, and a positive braid word $w_I=\sigma_{l_1}\cdots \sigma_{l_s}$ representing an element of the $r$-strand braid group $\Br(r)$, so that the closure of the braid is a diagram $D$ representing the link $L$. On the algebra side, one associates to $w_I$ a \emph{Soergel bimodule} $B_I$ over the polynomial ring $\Z[x_1,\dots,x_r]$, with Hochschild homology $\hoch(B_I)$. The whole poset of subindices (subwords) $J\leq I$, which is combinatorially a cube, indexes a \emph{cubical complex} $C(D)=\bigoplus_J \hoch(B_J)$, with a cubical differential $d_v$ defined using elementary maps between Soergel bimodules. After rationalization, this is one possible description of the complex $(C(D),d_v)$ whose homology is $KRH_\infty(L)$. Notice that $n$ plays no role so far because the twisting differential $d_-$ has not yet been involved.  

Geometry comes into the picture following the observation that Soergel bimodules $B_J$ are torus equivariant cohomologies of the Bott-Samelson varieties $\BTS_\tT(w_J)$, and relevant maps between the bimodules are induced from natural maps between the varieties. These varieties are built out of various subgroups of the unitary group $\Ur$. Concretely, we take as building block $G_i\leq \Ur$, the block diagonal subgroup with a $2\times 2$ block in positions $i$ and $i+1$ and $1\times 1$ blocks elsewhere. The significant $\tT$-spaces are the Bott-Samelson variety
$$\BTS_\tT(w_J)= G_{i_1}\times_\tT\cdots\times_\tT G_{i_l}/\tT,$$
 for which its Borel equivariant cohomology is the Soergel bimodule $\htor^*(\BTS_\tT(w_J))\cong B_J$, and also the space
$$\BB_\tT(w_J)= G_{i_1}\times_\tT\cdots\times_\tT G_{i_l},$$ which is a principal $\tT$-bundle over $\BTS_\tT(w_J)$ and satisfies $\htor^*(\BB_\tT(w_J))\cong \hoch(B_J)$. The $\tT$-action on $\BTS_\tT(w_J)$ is by left multiplication on the first factor $G_{i_1}$, while the action on $\BB_\tT(w_J)$ is by conjugation on all factors. These spaces and their relationship to Hochschild homology of Soergel bimodules are explored in an algebro-geometric language in \cite{webwil}, and they are also the starting point for the stable homotopy link invariant in \cite{nitubroken1}. In the latter, Kitchloo glues suspensions of the various $\BB_\tT(w_J)$ for $J\leq I$ to define a \emph{filtered $\Ur$-spectrum of strict broken symmetries} $s\BB(L)$, and its equivariant stable homotopy type is shown to be a link invariant up to a notion of \emph{quasi-equivalence}. Given the relation to Soergel bimodules, it is not surprising that integral HOMFLY-PT homology is recovered through Borel equivariant cohomology $\htor^*$. 

 In this context of strict broken symmetries, the notion of deforming HOMFLY-PT into $\sln$ is achieved by twisting the cohomology theory $\HH^*$. A first approach, still algebraic in flavor, is to take advantage of the structure of $\htor^*(\BB_\tT(w_J))$ as an algebra over $\HH^*(\mathrm{U})$, the non-equivariant cohomology of the infinite unitary group $\U$. This structure comes from a composite map 
 $$E\tT\times_{\tT}\BB_\tT(w_J)\to E\tT\times_{\tT}\Ur\to \U$$ in which the first arrow is induced from a canonical $\tT$-equivariant multiplication map (with respect to conjugation action) $\BB_\tT(w_J)\to \Ur$, and the second arrow can be put in concrete geometric terms of frames and matrices. The ring $\HH^*(\mathrm{U})$ is an exterior algebra in generators $\beta_n$, $n=0,1,2,\dots$ with degree $|\beta_n|=2n+1$. For fixed $n$, the action of $\beta_n$ on $\htor^*(\BB_\tT(w_J))$, being an exterior generator, defines a differential $d_-$. We can take 
 $$\nHH_\tT^*(\BB_\tT(w_J))=H\left(\htor^*(\BB_\tT(w_J)),d_-\right),$$
and think of $\nHH_\tT^*$ as a homology functorial on $\tT$-spaces over $\Ur$. Assembling into a cube $\bigoplus_{J\leq I} \nHH_\tT^*(\BB_\tT(w_J))$ and taking homology with respect to the induced cubical differential gives an integral link homology with torsion at the unknot generated by $x^n$. Formulated concisely, here is what we set out to prove.

\begin{theorem}
Through application of the theory $\nHH_\tT^*$ to the $\tT$-spaces $\BB_\tT(w_J)$ one obtains an integral version of Rasmussen's double complex with differentials $d_-$ and $d_v$. Homologies of this complex compute an $\sln$ homology theory of links with value at the unknot given by 
$$\KRHn(\unlink)=\frac{\Z[x]}{\left(x^n\right)}.$$
Scaling the differential $d_-$ by $n+1$ recovers the un-normalized integral homology $KRH^\Z_n$.
\end{theorem}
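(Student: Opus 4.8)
\emph{Proof plan.} The strategy is to identify, after the known algebraic translation, the triple $\bigl(\bigoplus_{J\le I}\htor^*(\BB_\tT(w_J)),\,d_v,\,d_-\bigr)$ with an integral version of Rasmussen's double complex — a renormalization of Krasner's \cite{krasnerintegral} — in which the twisting differential is the one attached to the potential $x^n$; the value at the unknot and the comparison with $KRH_n^\Z$ then follow by direct computation.

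First I would invoke the isomorphism of graded algebras $\htor^*(\BB_\tT(w_J))\cong\hoch(B_J)$ of \cite{webwil} and record that it is natural for the elementary maps of $\tT$-spaces inducing $d_v$, so that $\bigl(\bigoplus_{J\le I}\htor^*(\BB_\tT(w_J)),\,d_v\bigr)$ is identified with the cube of Hochschild homologies of Soergel bimodules computing integral HOMFLY-PT homology — no dependence on $n$ yet. Since the maps $E\tT\times_\tT\BB_\tT(w_J)\to E\tT\times_\tT\Ur\to\U$ are compatible across the cube, the $\HH^*(\U)$-module structure is natural in $J$; in particular $d_v$ is $\HH^*(\U)$-linear and hence anticommutes with multiplication by $\beta_n$, which assembles $d_v$ and $d_-$ into a double complex. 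That its iterated homology agrees with that of the total complex is the content of the convergence of the double-complex spectral sequence, the cube being finite in its own direction.

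The substance of the proof is to identify $d_-$ — the action of the exterior generator $\beta_n\in\HH^{2n+1}(\U)$ — with Rasmussen's twisting differential for the potential $x^n$. Because the cube complex is built multiplicatively from the atomic complexes attached to a single strand and to a single generator $G_i$, and $\BB_\tT(w_J)\to\Ur$ is multiplication of group elements, it is enough to compute the $\beta_n$-action on these atomic pieces and then appeal to the Leibniz rule. For a single strand, $E\tT\times_\tT\BB_\tT(w_\emptyset)\to\U$ is, up to homotopy, the classical Bott map $\mathbb{CP}^\infty\times S^1\to\U$, $([v],\mu)\mapsto\mu\cdot\pi_v+(\mathrm{id}-\pi_v)$ with $\pi_v$ the projection onto $\C v$; since every decomposable class of $\HH^{2n+1}(\U)$ pulls back to zero in $\HH^*(\mathbb{CP}^\infty\times S^1)=\Z[x]\otimes\Lambda(\hat x)$, the generator $\beta_n$ pulls back to $\pm x^n\hat x$, irrespective of the choice of exterior generator. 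The analogous statement for a single crossing $G_i$ is a finite check against the ``frames and matrices'' description of the maps, after which one concludes that $\beta_n$ acts on $\bigoplus_J\hoch(B_J)$ as Rasmussen's $d_-$ for the potential $x^n$.

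Granting this, the remaining assertions are immediate. At the unknot, presented by the empty braid, the cube is a single vertex with $d_v=0$, and $\nHH_\tT^*(\BB_\tT(w_\emptyset))$ is the homology of $\bigl(\Z[x]\otimes\Lambda(\hat x),\ \text{multiplication by }x^n\hat x\bigr)$, namely $\Z[x]\hat x/x^n\Z[x]\hat x\cong\Z[x]/(x^n)$, so $\KRHn(\unlink)=\Z[x]/(x^n)$; invariance under braid and Markov moves is inherited once the double complex has been identified with Krasner's \cite{krasnerintegral} (with the standard treatment of negative crossings). Scaling $d_-$ by $n+1$ replaces the $\beta_n$-action by $n+1$ times it, which is precisely Krasner's differential for the honest integral potential $x^{n+1}$, since $(x^{n+1})'=(n+1)x^n$; this recovers $KRH_n^\Z$ with $\Z[x]/\bigl((n+1)x^n\bigr)$ at the unknot. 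I expect the main obstacle to be the crossing-level identification in the previous paragraph — matching the geometrically defined $\beta_n$-action with the explicit form of Krasner's integral twisting differential on $\hoch(B_i)$ — which requires careful bookkeeping of the frames-and-matrices map together with the behaviour of $\BB_\tT(w_J)\to\Ur$ under concatenation of braid words.
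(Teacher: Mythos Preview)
Your overall shape is right, but two steps hide substantial work that the paper carries out and that your outline does not supply.

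First, over $\Z$ the identification $\htor^*(\BB_\tT(w_J))\cong\hoch(B_J)$ is only a non-canonical isomorphism of abelian groups, realized as the associated graded of the Serre filtration for $\tT\to\BB_{h\tT}(w_J)\to\BTS_{h\tT}(w_J)$; it is not an algebra or $\HH^*(\U)$-module isomorphism, so you cannot simply transport the $\beta_n$-action and compare. The paper instead treats $(\htor^*(\BB_\tT(w_J)),\beta_n)$ as a filtered complex, shows $\beta_n$ lies in Serre filtration $\geq 2n$, and proves that the resulting spectral sequence collapses at page $2n+1$ into the single Hochschild-degree-$r$ line. This collapse is the main technical lemma, established by an induction on the complexity $r+\sum_t i_t$ of the braid word, and it is what yields the canonical identification $\nHH_\tT^*(w_J)\cong H(\hoch(B_J),\beta_n)$ as well as the isomorphism $H_vH_-H_+\cong H_{v-}H_+$. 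Your appeal to ``convergence of the double-complex spectral sequence'' gives only convergence, not the collapse needed for that last isomorphism.

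Second, ``Rasmussen's $d_-$ for the potential $x^n$'' is not defined over $\Z$: the cycle $\kappa'=\tfrac{1}{n+1}\sum_i\tfrac{x_i^{n+1}-y_i^{n+1}}{x_i-y_i}\hat x_i$ has denominators at the chain level. The substantive point is that this rational class is nonetheless integral in $\hoch(\coebs)$, where it coincides with the geometric $\beta_n$; the paper proves this by restricting to $\tT$-fixed points (where $\beta_n$ becomes $\sum_i x_i^n\hat x_i$) and using injectivity of localization together with $\Z$-freeness of the Serre $E_\infty$-page. Your single-strand computation is exactly this base case, but the Leibniz/atomic argument does not assemble as written because $\BB_\tT(w_J)$ is a twisted rather than direct product of the $G_{i_t}$. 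Finally, link invariance is not inherited ``from Krasner'' --- the complex is precisely \emph{not} Krasner's, having a rescaled $d_-$ --- but is deduced from the Rasmussen spectral sequence, whose $E_1$ is integral HOMFLY-PT homology independently of the choice of $d_-$.
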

 
This approach can also be used to obtain an integral equivariant link homology in the vein of Krasner's. For this, we work with the cohomology theory $\hcal^*(X)=\HH(X)\ctp \R$, where $\R=\Z[b_1,b_2,...]$ and $b_i$ has cohomological degree $-2i$.  The reader should be wary about the ambiguous terminology: topologically we work with $\tT$-spaces and their equivariant cohomology, but when we qualify link homologies as `equivariant' we are referring to extending coefficients by formal polynomial variables such as $b_i$. In this extended theory, the cohomology of $\mathrm U$ is given by $\hcal^*(\mathrm U)=\Lambda (\beta_0,\beta_1,\beta_2,\dots)\ctp\R$, and there is a universal class $\beta_u= \sum_{i>0} b_i\beta_i \in \hcal^1(\mathrm U)$, i.e. a combination of all the $\beta_n$ for $n>0$ with the formal parameters $b_i$ as coefficients. We can, again, define a differential $d_-$ via multiplication by $\beta_u$ and thus get
$$\uHH_\tT^*(\BB_\tT(w_J))=H\left(\hcal_\tT^*(\BB_\tT(w_J)),d_-\right),$$
which is a functor of $\tT$-spaces over $\Ur$. In this case, we obtain a link homology with torsion at the unknot generated by the generic (or \emph{universal}) power series $\sum_{i>0}^\infty b_ix^i$.

\begin{theorem}
Through application of the theory $\uHH_\tT^*$ to the $\tT$-spaces $\BB_\tT(w_J)$ one obtains a `universal integral equivariant' $\sln$ homology theory of links with value at the unknot given by 
$$\KRHu(\unlink)=\frac{\Z[b_1,b_2,\dots][\![x]\!]}{(\sum_{i>0} b_ix^i)}.$$
\end{theorem}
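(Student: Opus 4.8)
The plan is to mirror the proof of the previous theorem, replacing the fixed exterior generator $\beta_n$ by the universal class $\beta_u = \sum_{i>0} b_i\beta_i \in \hcal^1(\U)$ throughout, and then isolating the one new computation: identifying the twisted cohomology $\uHH_\tT^*$ of the point (the $r=1$, empty-word case) with the claimed quotient ring. First I would set up the extended theory $\hcal^*(X) = \HH(X)\ctp\R$ carefully, checking that $\hcal^*(\U) = \Lambda(\beta_0,\beta_1,\dots)\ctp\R$ as a ring (this is just a K\"unneth/flatness statement, since $\R$ is a polynomial ring over $\Z$, hence flat), and that the $\hcal^*(\U)$-module structure on $\hcal_\tT^*(\BB_\tT(w_J))$ obtained by base-changing the $\HH^*(\U)$-module structure along $\Z\to\R$ is the one induced by the same geometric composite $E\tT\times_\tT\BB_\tT(w_J)\to E\tT\times_\tT\Ur\to\U$. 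The key point is that $\beta_u$ still acts as a (non-homogeneous, but square-zero) exterior-type operator: since each $\beta_i$ squares to zero and they anticommute, $\beta_u^2 = \sum_{i,j} b_ib_j\beta_i\beta_j = 0$, so $d_- = \beta_u\cdot(-)$ is a genuine differential and $\uHH_\tT^*(\BB_\tT(w_J))$ is well defined and functorial on $\tT$-spaces over $\Ur$ exactly as in the specialized case.

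Next I would run the same cube-of-resolutions assembly: for a positive braid word $w_I$ one forms $\bigoplus_{J\le I}\uHH_\tT^*(\BB_\tT(w_J))$ with the cubical differential $d_v$ induced by the elementary maps between the Bott-Samelson spaces (which are the topological avatars of the maps between Soergel bimodules), and one observes that $d_-$ and $d_v$ anticommute because $d_-$ is multiplication by a class pulled back from $\U$ along maps compatible with the cube structure — this is identical to the argument already used for $\nHH_\tT^*$ and needs no change, since that argument never used that the exterior class was homogeneous. Invariance of the resulting homology under Markov moves / quasi-equivalence then follows from Kitchloo's invariance of $s\BB(L)$ together with the naturality of $\hcal^*$ and of the $d_-$-construction, again formally parallel to the specialized case. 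So the bulk of the proof is a reduction: everything is functorial base change from the work underlying the first theorem, and the only genuinely new content is the unknot computation.

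The unknot computation is where the real work lies, and I expect it to be the main obstacle. For the unknot presented as a $1$-strand braid (or equivalently the trivial word), $\BB_\tT(w_\emptyset)$ is essentially $\tT$ acting on a point, so $\hcal_\tT^*(\mathrm{pt})$ over $\U = \U(1) = S^1$ becomes, after the relevant identifications, $\R[x]\ctp\hcal^*(S^1)/(\text{something})$ with the $\U$-module structure recording how $x$ (the equivariant parameter, degree $(2,2,0)$) interacts with the exterior generators. Concretely I would unwind the composite $E\tT\times_\tT\mathrm{pt}\to E\tT\times_\tT\U(1)\to\U$ at the level of cohomology to see that multiplication by $\beta_i\in\hcal^{2i+1}$ acts on $\R[x]\otimes\Lambda(\hat x)$ (with $|\hat x| = (1,0,1)$, say) by sending $1\mapsto x^i\hat x$ up to units — the $i$-th power of $x$ appearing because $\beta_i$ is the $i$-th exterior generator and the map $B\U(1)\to B\U$ classifies the tautological line, whose Chern class is $x$. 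Then $d_- = \beta_u\cdot(-)$ sends $1\mapsto \big(\sum_{i>0} b_i x^i\big)\hat x$, and taking homology of the two-step complex $\R[x]\xrightarrow{\,\cdot(\sum b_ix^i)\,}\R[x]\hat x$ yields exactly $\R[x]/(\sum_{i>0} b_ix^i)$ in the appropriate degree, which after completing in the $x$-adic (equivalently $(b_1)$-adic, since $b_1x$ is the lowest term) topology gives $\Z[b_1,b_2,\dots][\![x]\!]/(\sum_{i>0} b_i x^i)$ as claimed. The subtle points to get right are: (i) justifying the completion — why the natural home for the answer is the power-series ring rather than the polynomial ring, which comes from the fact that $\hcal^*$ involves $\R$ with the $b_i$ in \emph{negative} cohomological degree, so that the bidegree bookkeeping forces an infinite sum and the invariant limit is a completed tensor product; and (ii) checking that the geometric map really does produce the \emph{full} universal power series $\sum_{i>0}b_ix^i$ with no missing or extra terms, i.e. that each $\beta_i$ contributes $x^i$ exactly. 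Once the unknot case is pinned down, the grading claim $\gr(x) = (2,2,0)$ and the identification of the torsion generator are immediate, and the theorem follows.
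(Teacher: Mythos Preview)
Your proposal is correct and follows essentially the same route as the paper: extend coefficients to $\R=\Z[b_1,b_2,\dots]$, replace $\beta_n$ by the universal class $\beta_u=\sum_{i>0}b_i\beta_i$, observe that the structural results proved for $\nHH_\tT^*$ (concentration in top Hochschild degree, INS conditions, the Rasmussen-type collapse) carry over verbatim, and then read off the unknot value from the $r=1$ case. The paper itself says little more than ``the results from the previous section apply with minor adjustments,'' so your fleshing out of the details is appropriate.

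Two small points worth tightening. First, your description of $\BB_\tT(w_\emptyset)$ as ``$\tT$ acting on a point'' is slightly off: for the empty word in $\Br(1)$ the space is the circle $\tT=\U(1)$ itself with trivial conjugation action, which is why $\hcal_\tT^*$ picks up the exterior factor $\Lambda(\hat x)$; you arrive at the correct complex anyway. Second, the completion is not something you impose after computing $\R[x]/(\sum b_ix^i)$: it is already built into the definition $\hcal^*(X)=\HH^*(X)\ctp\R$ via the graded completed tensor product, since the $b_i$ sit in negative degree. So the unknot complex is $\R[\![x]\!]\otimes\Lambda(\hat x)$ from the outset, with $d_-(1)=(\sum_{i>0}b_ix^i)\hat x$, and the quotient $\R[\![x]\!]/(\sum_{i>0}b_ix^i)$ falls out directly with no separate completion step.
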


The double complex $\left(\bigoplus_{J\leq I}\hcal^*_\tT(\BB_\tT(w_J)),d_-,d_v\right)$ can be specialized to get other versions of $\sln$ link homology: set all $b_i=0$ except for $b_n=1$ to get integral homologies $KRH^\Z_n$, rationalize to obtain rational homologies $KRH_n$, or keep finitely many of the $b_i$ as variables to produce equivariant homologies similar to $KRH^{\Q[a_0,\dots,a_{n-2}]}_p$.

The functor $\uHH_\tT^*$ appears in \cite{nitubroken2} as an approximation to a twisted cohomology theory that we denote here by $\thetah^*_\tT$. To get a sense of what this twisted theory is, one begins with the observation that the cohomology theory $\hcal^*$ can be represented by the spectrum $\mathcal H = \HH\Z\wedge \bup$, where $\HH\Z$ is the integral Eilenberg-Maclane spectrum and $B\U$ is the classifying space of $\U$. The spectrum $\hcal$ admits a suitable ring structure, allowing us to apply the formalism of twisted cohomology theories (e.g. \cite{satiwester}, \cite{abghr}). Concretely, there is a twist $\theta:\U\to B\GL(\hcal)$, ultimately originating from Bott periodicity, giving a theory $\thetah^*$ for spaces over $\mathrm U$. It can be applied, in particular, to the Borel construction on $\BB_\tT(w_J)$, in which case we write
$$\thetah^*_\tT(\BB_\tT(w_J)):=\thetah^*(\E\tT\times_\tT\BB_\tT(w_J)).$$
The relation between $\thetah^*_\tT(\BB_\tT(w_J))$ and $\uHH^*_\tT(\BB_\tT(w_J))$ is a spectral sequence whose collapse is proved in our last section.
\begin{theorem}
There is a spectral sequence with signature
\begin{equation*}\thetae_1^{*,*}(X)=\hcal^*_\tT(\BB_\tT(w_J)) \Longrightarrow \thetah^{*}_\tT(\BB_\tT(w_J)),\end{equation*}
with differential $d_1$ given by multiplication by $\beta_u$, and thus $$\thetae_2^{*,*}(X)=\uHH^*_\tT(\BB_\tT(w_J)).$$
The spectral sequence degenerates at the second page, and in consequence $\uHH^*_\tT(\BB_\tT(w_J))$ is an associated graded of $\thetah^*_\tT(\BB_\tT(w_J))$.
\end{theorem}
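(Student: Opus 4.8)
The plan is to represent $\thetah^*$ via a twisted suspension (Thom) spectrum and to extract the spectral sequence from the natural filtration of the twist. Recall that for a space $Y$ over $\U$ the twisted group $\thetah^*(Y)$ is the $\hcal$-cohomology of the Thom spectrum of the $\hcal$-line bundle classified by $Y \to \U \xrightarrow{\theta} B\GL(\hcal)$, and that for the trivial twist this Thom spectrum is $\Sigma^\infty Y_+$, so $\thetah^*$ specializes to $\hcal^*$. The twist $\theta$ comes equipped with a canonical filtration (a Postnikov-style tower of $B\GL(\hcal)$, or the tower of $\beta_u$-adic truncations) whose bottom nontrivial layer is the primary obstruction, recorded by the universal class $\beta_u \in \hcal^1(\U)$; this is how $\theta$ is assembled out of Bott periodicity. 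Pulling this filtration back along $X = \E\tT\times_\tT\BB_\tT(w_J) \to \U$ filters the Thom spectrum $X^\theta$, and applying $\hcal^*(-)$ produces the spectral sequence. Its $E_1$-term is the untwisted cohomology $\hcal^*_\tT(\BB_\tT(w_J))$, and — because the first layer of $\theta$ is $\beta_u$ — the $d_1$-differential is module multiplication by the image of $\beta_u$ under $\hcal^*(\U) \to \hcal^*_\tT(\BB_\tT(w_J))$. By the definition of $\uHH^*_\tT$, this identifies $\thetae_2^{*,*}(X) = H(\hcal^*_\tT(\BB_\tT(w_J)),\, \beta_u\cdot-) = \uHH^*_\tT(\BB_\tT(w_J))$.

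For convergence I would check that the filtration on $X^\theta$ is exhaustive and, in each total degree, finite: this follows because $\BB_\tT(w_J)$ is compact and one may truncate $\E\tT$, or, more structurally, because $\hcal^*_\tT(\BB_\tT(w_J))$ is finitely generated over the Noetherian ring $\hcal^*(B\tT) = \R[\![x_1,\dots,x_r]\!]$ and the filtration is essentially the $\beta_u$-adic one. Hence the spectral sequence converges strongly to $\thetah^*_\tT(\BB_\tT(w_J))$, and $E_\infty$ is its associated graded.

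The substance of the statement is the degeneration $d_r = 0$ for $r \ge 2$. Abstractly the higher differentials are the secondary and higher operations carried by the upper layers of the twist; since $\theta$ is built from the single class $\beta_u$, these are (matric) Massey-type products in $\beta_u$, and the key point is that $\beta_u^2 = 0$ — indeed strictly so, because $\U$ is formal and $\beta_u = \sum_{i>0} b_i\beta_i$ is a sum of exterior generators of $\hcal^*(\U) = \Lambda(\beta_0,\beta_1,\dots)\ctp\R$, so in a minimal cochain model $\beta_u \cup \beta_u$ vanishes identically. It follows that all these higher operations are strictly defined and zero; equivalently, the pullback of $\theta$ to $X$ is equivalent to the square-zero ($\beta_u$-linear) twist, so that $X^\theta$ is the cofiber of multiplication by $\beta_u$ on $\Sigma^\infty X_+ \wedge \hcal$ and its filtration has only two nontrivial layers. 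Either way the spectral sequence collapses at the second page, giving $\uHH^*_\tT(\BB_\tT(w_J)) = \assgr\,\thetah^*_\tT(\BB_\tT(w_J))$.

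The main obstacle is this last step: turning ``$\beta_u^2 = 0$'' into the vanishing of the genuine higher differentials requires an explicit enough handle on $\theta$ — one must see that its $k$-invariants beyond the primary one contribute, on spaces over $\U$, only through operations annihilated once $\beta_u$ squares to zero. I expect this to be handled by combining the formality of $\U$ with Kitchloo's concrete Bott-periodicity model of the twist; failing a clean structural argument, one can instead compute $\thetah^*_\tT(\BB_\tT(w_J))$ directly from the iterated $G_i/\tT$-bundle structure of the Bott–Samelson spaces together with the (already established) exactness of the $d_-$-Koszul complex, and then compare ranks over $\R[\![x_1,\dots,x_r]\!]$ to force $E_2 = E_\infty$.
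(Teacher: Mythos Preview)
Your construction of the spectral sequence and identification of $d_1$ as multiplication by $\beta_u$ is in line with the paper (which cites this from \cite{nitubroken2}), but your argument for degeneration at $E_2$ takes a very different and ultimately incomplete route.

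The paper's degeneration argument is a one-line parity count. Lemma~\ref{tophochdeguniv}\ref{b2} shows that $\uHH^*_\tT(\BB_\tT(w_J))$ is concentrated in total cohomological degrees of a single parity (namely, the parity of $r$), because the Serre--Hochschild spectral sequence collapses onto the top Hochschild line $\widehat E_2^{*,r}$. Since every differential $d_s$ in $\thetae_*^{*,*}$ has total degree~$1$, the source and target of any $d_s$ with $s\geq 2$ live in opposite parities, and one of them is zero. That is the entire proof.

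Your approach instead tries to prove degeneration for \emph{any} space over $\U$, arguing that $\beta_u^2=0$ ``strictly'' in a formal model of $\hcal^*(\U)$ forces the higher Massey-type obstructions carried by $\theta$ to vanish. This is the gap you yourself flag: formality of $\U$ controls the cohomological Massey products in $\hcal^*(\U)$, but the higher differentials here are governed by the $k$-invariants of the map $\theta:\U\to B\GL(\hcal)$ as a map of spaces, and it is not clear that these are determined by cup-product relations in $\hcal^*(\U)$. In particular, a successful version of your argument would establish $E_2$-degeneration for arbitrary $\tT$-spaces $X$ over $\Ur$, which the paper neither claims nor needs. Your fallback (direct computation of $\thetah^*_\tT$ via the iterated bundle structure followed by a rank comparison) could in principle be made to work, but it duplicates much of the effort already invested in Lemma~\ref{tophochdeg}/\ref{tophochdeguniv} and is far heavier than the parity observation.
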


On the nose, the spectrum $s\BB(L)$ seems to be a rather strong topological link invariant, capturing some well known algebraic invariants when observed through adequate cohomology theories. This approach to the study of link homology can serve a double purpose. On the one hand, the topological point of view on known link homologies such as HOMFLY-PT or $\sln$ could reveal further structure through standard algebraic topology machinery such as cohomological operations. On the other hand, theories like $\HH$ and $\thetah^*$, as well as other well developed cohomologies such as different flavors of K-theory, can be explored and compared to reveal new link invariants or provide a framework for studying various kinds of relations between link homologies. A concrete direction is suggested by results relating HOMFLY-PT and knot Floer homologies, see \cite{manolescu2014untwisted} and \cite{BPRW}. The latter work exploits interesting variations on Hochschild homology of Soergel bimodules that may admit topological interpretation by choosing the right (twisted) cohomology theory.

\noindent\textbf{General structure of the paper.}  In the first section we review the Soergel bimodule approach to HOMFLY-PT homology, and explain how it is obtained from the spaces $\BTS_\tT(w_J)$ and $\BB_\tT(w_J)$. We also review how negative crossings can be accommodated and what requirements should we ask of more general cohomologies to produce algebraic link invariants out of $s\BB(L)$. In the second section we present the original Khovanov-Rozansky complex.  We discuss how this complex can be reduced to the Soergel bimodule picture, and the Rasmussen spectral sequence is laid out there as well. The third section defines $\nHH_\tT^*$ and proves technical results about $\nHH_\tT^*(\BB_\tT(w_J))$ that allow for comparison to the algebraic setting from Section 2. In the last section we do a general overview of twisted theories and the particular twist $\theta$ on $\hcal=\HH\wedge\bup$. Then we elaborate on the relation between $\thetah^*_{\tT}$ and $\uHH^*_{\tT}$.

\noindent\textbf{Notation guide.} For the reader's convenience, we provide a quick reference notation table for various homologies and cohomologies used in this work.

{\renewcommand{\arraystretch}{1.5}
\begin{tabular}{p{1.8cm}| p{11.5cm}}

 $\HH^*$ & Singular cohomology with coefficients in $\Z$, represented by the Eilenberg Mac-Lane spectrum $\HH=\HH\Z$. \\$\hcal^*$ & Singular cohomology with coefficients in $\R=\Z[b_1,b_2,\dots]$, represented by the spectrum $\hcal=\HH\Z\wedge \bup$. \\ 
  $\HH_\tT^*, \hcal_\tT^*$ & Borel equivariant versions, with $\tT$-spaces as inputs.\\  
 $\nHH_\tT^*, \uHH_\tT^*$ & Algebraically twisted versions, obtained as homology with respect to multiplication by an odd class. Their inputs are $\tT$-spaces over $\Ur$. \\
  $\thetah_\tT^*$ & Topologically twisted version of $\hcal_\tT^*$, with twist defined as a map $\U\to B\GL (\hcal)$. Its inputs are $\tT$-spaces over $\Ur$.\\
  $H(-,d_\square)$ or $H_\square$ & Homology of a complex with respect to a differential $d_\square$, where $\square$ is some decoration.\\
  $\hoch$ & Hochschild homology of a bimodule over a polynomial ring.
\end{tabular}
}
 
\noindent\textbf{Acknowledgments.} The author would like to thank his advisor Nitu Kitchloo for suggesting this project and all the continuous support. He would also like to thank Anish Chedalavada and Mikhail Khovanov for helpful discussions, and a special thank you to Valentina Zapata Castro for her overall support during the time working on this project.

\section{Ordinary Borel cohomology and triply graded link homology}

\newcommand{\topur}{\mathsf{Top}_{\Ur}}
\newcommand{\term}{J_{\mathrm{\infty}}}

As stated in the introduction, we are interested in determining what algebraic invariants of links can be extracted from them spaces $\BB_\tT(w_J)$ by applying cohomology theories to them. The first instance is HOMFLY-PT homology, obtained via Borel equivariant cohomology. This section is devoted to sketching the relevant aspects of HOMFLY-PT homology and $\BB_\tT(w_J)$, and then having a first glance at how the machinery of spectral sequences helps establish the relation between them.

\subsection{HOMFLY-PT homology}

We assume the link $L$ has a presentation as the closure of a braid word $w=w_I=\sigma_{l_1}\cdots\sigma_{l_n}$ in the classical braid group on $r$ strands, $\mathrm{Br}(r)$, with standard generators $\sigma_1,\dots,\sigma_{r-1}$. For convenience of notation, we will write $\sigma_{-l}=\sigma_l^{-1}$, so subindices are allowed to take values $l=\pm 1,\dots,\pm (r-1)$. Denote by $\II$ the poset consisting of all functions $J:\{1,\dots,n\}\to\{0,1\}$, where $J_1\leq J_2$ if $\sgn(l_j)J_1(j)\leq \sgn(l_j)J_2(j)$ for all $j$. More often we will think of $J$ as the subsequence $J=(l_{j_1},\dots,l_{j_k})\subseteq I$, where $\{j_1,\dots,j_k\}=J^{-1}(1)$. In such situations we clearly refer to $J$ as a (sub)sequence and we may write $J=(i_1,\dots,i_k)=(l_{j_1},\dots,l_{j_k})$ to avoid too many levels of subindices. As categories, $\II$ is isomorphic to a cube $(0\to 1)^n$, and a non-identity indecomposable morphism (called \emph{edge}) $J_1\leq J_2$ happens when either the subsequence $J_1$ is longer than $J_2$ by exactly one negative term (a \emph{negative edge}) or $J_2$ is longer than $J_1$ by exactly one positive term (a \emph{positive edge}). The subsequence consisting of all the positive terms in $I$, discarding the negative ones, is denoted by $I_+$. It is the maximum element of the poset $\II$.

To each $J\in\II$, thought of as a subsequence $J=(i_1,\dots,i_k)$,  there is associated a braid subword $w_J=\sigma_{i_1}\cdots\sigma_{i_k}$ of $w_I$. We also associate to $J$ a Soergel bimodule $B_J$ over the polynomial ring $\Z[x_1,\dots,x_r]$. From the topological perspective, this polynomial ring is the equivariant cohomology of a point $\htor^*=\HH^*(B\tT)$ over a rank $r$ torus $\tT$, and we keep denoting it this way. An explicit definition of $B_J$ is
\begin{equation}\label{Xform}B_J=\frac{\Z[X_{j,l}]_{0\leq j\leq n, \; 1\leq l\leq r}}{\left\langle \begin{array}{lr}X_{j,|l_j|}+X_{j,|l_j|+1}-X_{j-1,|l_j|}-X_{j-1,|l_j|+1},& J(j)=1\\ 
X_{j,|l_j|} X_{j,|l_j|+1}-X_{j-1,|l_j|} X_{j-1,|l_j|+1}, & J(j)=1\\
X_{j,l}-X_{j-1,l},&  l\neq |l_j|,|l_j|+1,\mbox{ or }J(j)=0\end{array}\right\rangle}\, ,\end{equation}
with $\HH^*_{\tT}$-bimodule structure afforded by ring maps
\begin{align*}
l:\htor^*&\to B_J & r:\htor^*&\to B_J\\
x_i&\mapsto X_{0,i}, & x_i & \mapsto  X_{n,i}.
\end{align*}
Henceforward, we denote the variables in $\htor^*$ by $x_i$ when acting on the left of a bimodule $B$ and $y_i$ when acting on the right. If the left or right module structure on a bimodule $B$ comes from a ring map $\htor^*\to B$, then we also write $x_i$ or $y_i$ for their corresponding images in $B$. Thus in the case of $B_J$ we have $x_i=X_{0,i}$ and $y_i=X_{n,i}$.

\begin{remark}
For $J=(i_1,\dots,i_k)$ a positive subword, an equivalent and more usual definition is
$$B_J=\htor^*\otimes_{(\htor^*)^{i_1}}\cdots\otimes_{(\htor^*)^{i_k}}\htor^*,$$
where $(\htor^*)^{i}\leq \htor^*$ is the subring of polynomials invariant under the permutation $x_i\leftrightarrow x_{i+1}$. This explains the relations of the form $X+Y-Z-W$ and $XY-ZW$ in (\ref{Xform}).
\end{remark}

A useful observation is that the combined left-right action map from the enveloping algebra $(\htor^*)^{\mathrm{en}}=\htor^*\otimes\htor^*\to B_J$ factors through the algebra $\coebs=\htor^*\otimes_{(\htor^*)^{\Sigma}}\htor^*$, where we tensor is over symmetric polynomials.

\begin{figure}[h!]\label{braiddiagram}
\centering\vspace{0.3cm}
\includegraphics{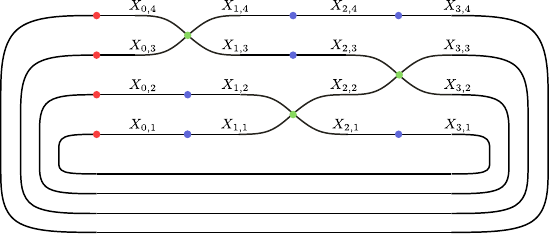}
\vspace{0.3cm}\caption{Singular diagram corresponding to the bimodule $B_{(3,1,2)}$ over $\Z[x_1,x_2,x_3,x_4]$.}
\end{figure}

It is practical to view (\ref{Xform}) as associated to a \emph{layered closed singular braid diagram} (\emph{singular diagram} in short) such as the one depicted in Figure \ref{braiddiagram}. Each vertex of a singular diagram lives in a layer $0\leq j \leq n$, numbered left to right. On each layer there is at most one 4-valent vertex, called a \emph{singular crossing} (green in figure), in which the strands $l_j$ and $l_j+1$ merge. All other vertices are 2-valent (red and blue), and we refer to these as \emph{marks}. Layer 0 will always consists of marks that will be referred to as \emph{closing marks} (red in figure). The incoming arcs to layer 0, which are the arcs that go around, are referred to as \emph{closing arcs}, and the corresponding variables are the \emph{closing variables}. These diagrams are always considered to be clock-wise oriented. Arcs have associated variables $X_{j,l}$, where $j$ indexes the layer of the source vertex and $l$ indexes the strand. If $D$ is a singular diagram, we can assign to it a ring $R(D)$ given by polynomials in the arc variables modded out by local relations. Each singular crossing contributes two relations like the top two in (\ref{Xform}), while each non-closing mark (blue in figure) contributes a relation like the bottom one.  Given a subsequence $J=(l_{j_1},\dots,l_{j_k})\subseteq I=(l_1,\dots,l_n)$ we can assign a singular diagram $D_J$ with layers $0\leq j \leq n$, where singular crossings merge contiguous strands $l_{j}$ and $l_{j}+1$ at the layer $j$ when $J(j)=1$. Layers $j$ in which $J(j)=0$ contain only marks. Then $B_J\cong R(D_J)$, where $x_i=X_{0,i}$ and $y_j=X_{n,i}$ are, respectively, the outgoing and incoming variable of the $i$-the closing mark. Notice that $x_i$ is the closing variable. 

So far, we have not considered relations corresponding to closing marks, which would be of the form $x_i-y_i$. Gearing towards the triply graded link homology categorifying the HOMFLY-PT polynomial, this is done in a derived fashion via Hochschild homology. More concretely, consider the enveloping algebra $(\htor^*)^{\mathrm{en}}=\Z [x_1,\dots,x_r,y_1,\dots,y_n]=\htor^*\otimes\htor^*$. Adding these closing relations on the nose would yield $$\hoch_0(B)=B\otimes_{(\htor^*)^{\mathrm{en}}}(\htor^*)^{\mathrm{en}}/(x_1-y_1,\dots x_r-y_r)$$ for any bimodule $B$ such as $B=B_J$, but we will rather consider the whole Hochschild homology $$\hoch(B)=B\otimes^{\mathbf{L}}_{(\htor^*)^{\mathrm{en}}}(\htor^*)^{\mathrm{en}}/(x_1-y_1,\dots x_r-y_r).$$ Since we are working over a polynomial ring, the Hochschild complex computing $\hoch(B)$ admits a concise model as a Koszul complex. This is a differential graded algebra
\newcommand{\choch}{\mathsf{CH}}
$$\choch(B):=B\otimes \Lambda(\hat x_1,\dots,\hat x_r)$$
with differential determined by $d_\mathsf{H}(\hat x_i)=x_i-y_i$ and the graded Leibniz rule with respect to the \emph{Hochschild} or \emph {Koszul grading} in which elements of $B$ have degree 0 and the exterior generators $\hat x_i$ have degree 1. When working with $B=B_J$ for positive $J$ we also have a \emph{cohomological grading} in which $X_{j,l}$ has degree 2 and $\hat x_i$ has degree 1.

\begin{definition}
Let $B$ be a $\htor^*$-bimodule. Its Hochschild complex is 
$$\choch(B):=B\otimes \Lambda(\hat x_1,\dots,\hat x_r)$$
with differential $d_\mathsf{H}$. Its Hochschild homology is
$$\hoch(B)=H_\mathsf{H}(\choch(B))=H(\choch(B),d_{\mathsf{H}}).$$
\end{definition}

These definitions of $\choch(B)$ and $\hoch(B)$ are functorial in $B$. The complex defining HOMFLY-PT homology is built by assigning a map $\hoch(B_{J_2})\to\hoch(B_{J_1})$ to each edge $J_1\leq J_2$. We refer to these as \emph{edge maps}, and they are defined as follows. 

\begin{definition}Given a positive edge $e$ of the form $J_1\leq J_2$ the \emph{positive edge map} $d_e:B_{J_2}\to B_{J_1}$ is induced by the identity map $\Z[X_{j,l}]\to\Z[X_{j,l}]$. For a negative edge $J_1\leq J_2$, where $J_1(j)>J_2(j)$, the \emph{negative edge map} $d_e:B_{J_2}\to B_{J_1}$ is induced by multiplication by $X_{j,|l_j|}-X_{j+1,|l_j|+1}$ in $\Z[X_{j,l}]$. There are corresponding positive and negative edge maps induced at the level of Hochschild complex and Hochschild homology.
\end{definition}
All induced maps always preserve the Hochschild degree. However, positive edge maps preserve cohomological degree, while negative edge maps increase it by 2.

\begin{figure}[h!]
\includegraphics{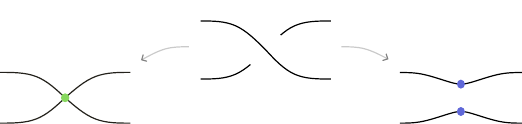}
\centering
\caption{Singular (or 0-) and oriented (or 1-) resolutions of a positive crossing. Numeration gets reversed for a negative crossing.}\label{resol}
\end{figure}

Let $L$ be a link presented by a braid word $w_I\in\Br(r)$ of length $n$. Each of the $n$ crossings in the braid can be resolved in two different ways as in Figure \ref{resol}, giving us $2^n$ possible singular diagrams.  Using the maps above we obtain a \emph{cubical diagram} 
\begin{align*}
\choch(B_\bullet) &: \II^{op} \to \mathsf{Ch}(\mathsf{Mod}_{\htor^*}).
\end{align*} By this we mean a graph morphism, where we regard $\II$ merely as a graph in which the only edges are the indecomposable morphisms as above. We obtain a double chain complex out of this cube by ``flattening it out'', which consists of arranging the Hochschild complexes corresponding to all possible subwords into
\begin{align}\label{rouquier}\begin{split}
\choch(B_{I_+})\longrightarrow\cdots \longrightarrow\bigoplus_{J\in \II^{t}\setminus\II^{t-1}}\choch(B_J)\{n_J\}\xlongrightarrow{d_v} &\bigoplus_{J\in \II^{t+1}\setminus\II^{t}}\choch(B_J)\{n_J\} \longrightarrow\\&\cdots \longrightarrow \choch(B_{I_-})\{n_I\}.\end{split}
\end{align}
Here we write $\II^t\subseteq \II$ for the subposet consisting of subsequences that are at most $t$ edges away from the maximal word $I_+$, so in this \emph{cubical grading} the elements of $\hoch(B_J)$ have degree equal to the distance from $J$ to $I_+$ in $\II$. The brackets $\{n_J\}$ express shifts in cohomological degree by $n_J$, twice the number of negative indices in $J$. This ensures that negative edge maps also preserve cohomological degree. We denote the entire cubical complex by
\begin{equation}\label{choch} CH(D):=\bigoplus_{J\in\II} \choch(B_J)\{n_{J}\}.\end{equation}
Given an edge $e$ of the form $J_1\leq J_2$, let $|e|$ be the amount of indices appearing in $J_1$ (or $J_2$) to the left of the index in which they differ. Then the differential $d_v$ in (\ref{rouquier}) is given by
$$d_v=\bigoplus_e (-1)^{|e|}d_e$$
running over all possible edges between subsequences of cubical degree $t$ and $t+1$. These signs are necessary to ensure that we have a chain complex. Since the maps $d_e$ come from bimodule maps, the differential $d_v$ commutes with $d_\mathsf{H}$.

\begin{theorem}\label{homflypt} (see \cite{krasnerintegral})
Let $L$ be a link represented by the closure of a braid diagram $D$. Up to overall degree shifts, the homology 
$$H_{v}H_{\mathsf{H}}(CH(D))=H_{v}\left(\bigoplus_{J\in\II} \hoch(B_J)\{n_J\}\right)$$
is a link invariant categorifying the HOMFLY-PT polynomial.
\end{theorem}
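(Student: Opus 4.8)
The plan is to identify the complex $(CH(D), d_v, d_{\mathsf H})$ with Krasner's integral Khovanov--Rozansky chain complex computing triply graded homology, and then invoke his invariance theorem \cite{krasnerintegral}. First I would recall Krasner's model: he builds a cube of resolutions indexed by $\II$, where each vertex carries the matrix factorization (or, after passing to the stable category, the bimodule) associated to the corresponding singular diagram $D_J$, and the closing relations are imposed via Hochschild homology. The content of the first part of the statement is that our cube $\choch(B_\bullet)$ agrees with his. This breaks into three checks: (i) the objects match, i.e. $\hoch(B_J) \cong \hoch(R(D_J))$ with the correct Hochschild and cohomological gradings --- this follows from the explicit presentation (\ref{Xform}) together with the identification $B_J \cong R(D_J)$ already recorded in the text, and the Koszul model for Hochschild homology; (ii) the positive edge maps agree --- both are induced by the evident "merge" map of polynomial rings $\Z[X_{j,l}] \to \Z[X_{j,l}]$, corresponding to resolving a crossing from its oriented to its singular resolution; (iii) the negative edge maps agree up to sign --- here one uses that the negative crossing is handled by the shifted complex $\choch(B_J)\{n_J\}$ with the multiplication map by $X_{j,|l_j|} - X_{j+1,|l_j|+1}$, which is precisely the chain map realizing the other differential in the Rouquier complex of a $\sigma^{-1}$ generator.

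Next I would address the sign conventions and the passage from the cubical diagram $\II^{op} \to \Ch(\mathsf{Mod}_{\htor^*})$ to an honest double complex. The signs $(-1)^{|e|}$ in $d_v = \bigoplus_e (-1)^{|e|} d_e$ are the standard ones making a commuting cube into an anticommuting one, hence $d_v^2 = 0$; since each $d_e$ is induced by a bimodule map it commutes with the Koszul differential $d_{\mathsf H}$, so $(CH(D), d_v + d_{\mathsf H})$ is a genuine complex and the two spectral sequences of the double complex are available. Because $d_{\mathsf H}$ preserves the cubical grading, the homology $H_v H_{\mathsf H}(CH(D))$ is well-defined as stated, and one must note it is finitely generated in each tridegree (the diagrams $D_J$ are finite, so each $B_J$ is a finitely generated module over a polynomial ring and its Koszul homology is finitely generated over $\htor^*$).

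Then the invariance claim itself is reduced to Krasner's theorem: one checks that under Markov moves and braid relations the complex $CH(D)$ changes by a chain homotopy equivalence up to the overall grading shifts indicated, exactly as in \cite{krasnerintegral}, and that the resulting invariant categorifies $P_\infty$ because taking graded Euler characteristic with respect to $\gr_t$ (combining the cubical and Hochschild gradings) recovers the skein-theoretic state sum for the HOMFLY--PT polynomial --- at the level of the unknot this is the computation $\hoch(B_\emptyset) = \htor^*/(x_i - y_i) \otimes \Lambda(\hat x) = \Z[x] \otimes \Lambda(\hat x)$ quoted in the introduction. The honest work is entirely in the dictionary of step one, so I would organize it as a lemma comparing $R(D_J)$-language with Krasner's matrix-factorization-language; modulo that translation, invariance and the categorification statement are quotations.

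The main obstacle I anticipate is the careful bookkeeping of the three gradings (cubical, Hochschild, cohomological) together with the shifts $\{n_J\}$ and the global normalizing shift, so that the comparison with Krasner's complex is an isomorphism of \emph{graded} complexes and not merely of ungraded ones --- in particular verifying that the shift $\{n_J\}$ (twice the number of negative letters in $J$) is exactly what is needed for the negative edge maps to be degree-preserving, and that it matches Krasner's convention for negative crossings after accounting for the reversal of the $0$- and $1$-resolutions in Figure \ref{resol}. A secondary subtlety is making sure the cube-to-double-complex flattening and its sign rule agree with the orientation conventions in the source, since a different but equally valid sign choice would give an isomorphic but not identical complex; this is routine but must be stated.
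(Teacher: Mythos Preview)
Your proposal is correct and matches the paper's treatment: the paper does not prove this theorem in place but cites \cite{krasnerintegral}, and the comparison you outline between $CH(D)$ and the Khovanov--Rozansky/Krasner complex is precisely what the paper carries out later as Proposition~\ref{coeffshomflypt}. Your bookkeeping concerns about gradings, shifts $\{n_J\}$, and sign conventions are legitimate but routine, and the paper handles them exactly as you anticipate.
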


The differential $d_v$ in this complex can be referred to as \emph{Rouquier}, \emph{cubical} or \emph{vertical} differential. We stick with $v$ for vertical according to the notation in \cite{rasmussen}. In principle, $\choch(D)$ is a triply graded double complex of abelian group endowed with the cubical, Hochschild and cohomological gradings. For this reason, HOMFLY-PT homology is sometimes referred to as triply graded link homology, although our choice of degrees might not be the most standard. 

We end this subsection with the remark that each $\choch(B_J)$ is in fact an algebra over $\choch(\coebs)$, and in consequence $\hoch(B_J)$ is an $\hoch(\coebs)$-algebra. This structure is preserved by the differential $d_v$, and it is valuable for obtaining $\sln$ link homologies from the complex $\choch(D)$.

\subsection{The topological model}\label{topmodelpositive}
In this section we define torus-equivariant spaces that produce $B_J$ and $\hoch(B_J)$ for a braid word $w_J$ after taking Borel equivariant cohomology, and interpret the edge maps $B_{J_2}\to B_{J_1}$ geometrically.

Recall that if $G$ is a compact Lie group and $X$ is a $G$-space, the Borel $G$-equivariant cohomology of $X$ is defined as
$$\HH_G^*(X):=\HH(EG\times_G X),$$
where $\HH$ is singular cohomology with integral coefficients, $EG$ is any contractible right $G$-space with free action, and $EG\times_G X$ denotes the quotient of $EG\times X$ by the equivalence relation generated by $(eg,x)\sim(e,gx)$ for $e\in EG$, $x\in X$ and $g\in G$. The quotient space $EG\times_G X$ is sometimes known as the \emph{Borel construction} or the \emph{homotopy orbits} of the action, and is also denoted  $X_{hG}=EG\times_G X$. The homotopy orbits of the one point $G$-space $*$ constitute the classifying space $BG=*_{hG}$. Borel cohomology of $X$ has the structure of an algebra over $\HH_G^*:=\HH_G^*(*)=\HH^*(BG)$, and it is an invariant up to $G$-equivariant homotopy. An example of interest is a rank $r$ torus $G=\tT$. In such case a natural model is $E\tT=(S^\infty)^r$, so that $E\tT\times_\tT *=(S^\infty)^r/(S^1)^r=(\C P^\infty)^r$, meaning that the coefficient ring is $\HH_\tT^*=\Z[x_1,\dots, x_r]$ with $x_i$ of degree 2. For the unitary group $G=\Ur$, a model for $E\Ur$ is given the Stiefel manifold, which is the space of unitary $r$-frames in $\C^\infty$.

When $K\leq G$ is a closed subgroup, any $G$-free contractible space $EG$ is also an $EK$ after restricting the action. So, for instance, $r$-frames in $\C^\infty$ is also a model for $E\tT$. One consequence is that, given any $K$-space $X$, we have an isomorphism $\HH_K^*(X)=\HH_G^*(G^{\mathsf{r}}\times_K X)$, where the superscript $G^{\mathsf{r}}$ indicates that the $K$-action by right multiplication is being used. Another consequence is that the Borel cohomology $\HH_K(G/K)$ of the homogeneous space $G/K$, considered as a $K$-space by left multiplication, has the structure of an $\HH_K^*$-bimodule. This is obtained from two maps $$\lambda,\rho:EG\times_K G/K\to EG\times_K *$$ inducing the left and right $\HH_K^*$-algebra structures, respectively. Namely, the map $\lambda$ is induced by the unique map $G/K\to *$, and it gives the standard $\HH_K^*$ module structure, while the map $\rho$ sends $[e,g]\mapsto[eg,*]$. Notice that $\rho$ makes use of the whole $G$-action on $EG$.

The latter observation is relevant for us in the particular case of $\Ur$ and its standard maximal torus $\tT$. The homogeneous space $\Ur/\tT$ is the usual flag variety, the moduli space of complete flags in $\C^r$. Various characterizations of its equivariant cohomology $\htor^*(\Ur/\tT)$ as a ring and as an $\htor^*$-bimodule are elaborated in the Appendix. Among these, there is an identification of $\htor^*(\Ur/\tT)$ with $\coebs=\htor^*\otimes_{(\htor^*)^\Sigma}\htor^*$.

To build our spaces of interest we will be looking at various subgroups of $\Ur$. Denote by $G_i\leq\Ur$ the subgroup consisting of block diagonal unitary matrices having a 2 by 2 block in positions $i$, $i+1$, and 1 by 1 blocks elsewhere along the diagonal. The torus $\tT$ still acts on each $G_i$ by both left and right multiplication. Given $J=(i_1,\dots,i_k)$, first define $$\BB_{\tT\times\tT}(w_J):=G_{i_1}\times_\tT\cdots\times_\tT G_{i_k}.$$ This is a $\tT\times\tT$-space in which one copy of $\tT$ acts by left multiplication on $G_{i_1}$, and the other $\tT$ acts by right multiplication on $G_{i_k}$. Next we obtain $\tT$ spaces out of $\BB_{\tT\times\tT}(w_J)$ in two different ways. On the one hand, we may restrict the action along the inclusion of a diagonal subtorus $\iota:\tT=\lbrace (t,t^{-1})\rbrace\hookrightarrow \tT\times\tT$ and get
$$\BB_{\tT}(w_J):=\iota^*(\BB_{\tT\times\tT}(w_J))=G_{i_1}\times_\tT\cdots\times_\tT G_{i_k},$$
so that the space is the same but the action is now by conjugation in the first and last factor. On the other hand, we can mod out the right action to obtain
$$\BTS_{\tT}(w_J):=\BB_{\tT\times\tT}(w_J)/\tT=G_{i_1}\times_\tT\cdots\times_\tT G_{i_k}/\tT,$$
so the only $\tT$-action is on the first factor. These two spaces are related by the projection
$$\BB_{\tT}(w_J)\to\BTS_{\tT}(w_J),$$
which is a $\tT$-equivariant principal $\tT$-bundle. This extends to a commutative square
$$\begin{tikzcd}\BB_{\tT}(w_J)\ar[r]\ar[d]&\BTS_{\tT}(w_J)\ar[d]\\ \Ur^c \ar[r]&\Ur/\tT
\end{tikzcd}$$
where vertical maps are both given by multiplication $[g_1,\dots,g_k]\mapsto [g_1\dots g_k]$, and the bottom map is projection. The small $c$ in $\Ur^c$ says that the $\tT$-action is by conjugation. All maps are $\tT$-equivariant and this square is in fact a morphism of principal $\tT$-bundles.

The map $\BTS_\tT(w_J)\to\Ur/\tT$ then induces an $\coebs$-algebra structure $\coebs\to\htor^*(\BTS_\tT(w_J))$, which in turn yields an $\htor^*$-bimodule structure on $\htor^*(\BTS_\tT(w_J))$ by precomposition with the two maps $\htor^*\to\coebs$. Now, another description of $\coebs$ (see Appendix) is as a free left $\htor^*$-module on the Schubert classes $S_w$ indexed by the Weyl group, which is the symmetric group $\Sigma_r$ of permutations of $r$-elements. The right $\htor^*$-action is explicitly given by $y_i= x_i+S_{\sigma_i}-S_{\sigma_{i-1}}$. 

\begin{proposition}
Let $w_J$ be a positive braid word. Then the $\htor^*$-bimodule $\htor^*(\BTS_\tT(w_J))$ is isomorphic to the Soergel bimodule $B_J$.
\end{proposition}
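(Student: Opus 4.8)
The plan is to induct on the length $k$ of the word $J=(i_1,\dots,i_k)$, exploiting the classical description of $\BTS_\tT(w_J)$ as an iterated $\C P^1$-bundle and computing its Borel cohomology by the projective bundle theorem.

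The base of the induction is $k=1$: here $\BTS_\tT(\sigma_i)=G_i/\tT$ is a copy of $\C P^1$ on which $\tT$ acts through the two characters $\chi_i,\chi_{i+1}$, so $E\tT\times_\tT(G_i/\tT)$ is the projectivization $\mathbb{P}(\gamma_i\oplus\gamma_{i+1})\to B\tT$ of the corresponding sum of tautological line bundles. The projective bundle theorem gives $\htor^*(G_i/\tT)\cong\htor^*[\zeta]/\big((\zeta-x_i)(\zeta-x_{i+1})\big)$, which is exactly $B_{(i)}=\htor^*\otimes_{(\htor^*)^i}\htor^*$; that this matches the $\htor^*$-bimodule structures is seen by identifying the tautological line of $\mathbb{P}(\gamma_i\oplus\gamma_{i+1})$ with the line bundle associated to the character $\chi_i$ and the principal $\tT$-bundle $E\tT\times_\tT G_i\to E\tT\times_\tT(G_i/\tT)$ — the bundle whose classifying map induces the right $\htor^*$-algebra structure via $\rho$. (This step is also contained in the description of $\htor^*(\Ur/\tT)$ recorded in the Appendix and in \cite{webwil}.) The degenerate case $k=0$, where $\BTS_\tT(\emptyset)=*$ and $B_\emptyset=\htor^*$ with its tautological bimodule structure, requires nothing.

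For the inductive step, set $J'=(i_1,\dots,i_{k-1})$ and write $\BTS_\tT(w_J)=\BB_{\tT\times\tT}(w_{J'})\times_\tT(G_{i_k}/\tT)$, the balanced product formed using the right $\tT$-action on $\BB_{\tT\times\tT}(w_{J'})$ against the left $\tT$-action on $G_{i_k}/\tT$. Since $G_{i_k}/\tT\cong\mathbb{P}(\C_{i_k}\oplus\C_{i_k+1})$ as a $\tT$-space, this realizes the forgetful map $q\colon\BTS_\tT(w_J)\to\BTS_\tT(w_{J'})$ as the projectivization $\mathbb{P}(\mathcal E)$ of the rank-two bundle $\mathcal E=\BB_{\tT\times\tT}(w_{J'})\times_\tT(\C_{i_k}\oplus\C_{i_k+1})$, which splits as $\mathcal L_{i_k}\oplus\mathcal L_{i_k+1}$ with $\mathcal L_j$ the line bundle associated to the character $\chi_j$ and the principal $\tT$-bundle $\BB_{\tT\times\tT}(w_{J'})\to\BTS_\tT(w_{J'})$. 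Applying $E\tT\times_\tT(-)$ and the projective bundle theorem, and noting that the classifying map of the principal bundle $E\tT\times_\tT\BB_{\tT\times\tT}(w_{J'})\to E\tT\times_\tT\BTS_\tT(w_{J'})$ is exactly the map inducing the right $\htor^*$-structure on $\htor^*(\BTS_\tT(w_{J'}))$, one obtains
\[
\htor^*(\BTS_\tT(w_J))\;\cong\;\htor^*(\BTS_\tT(w_{J'}))[\zeta]\big/\big((\zeta-\rho_{J'}^*(x_{i_k}))(\zeta-\rho_{J'}^*(x_{i_k+1}))\big).
\]
By the inductive identification $\htor^*(\BTS_\tT(w_{J'}))\cong B_{J'}$ the classes $\rho_{J'}^*(x_{i_k}),\rho_{J'}^*(x_{i_k+1})$ become the right variables $y^{J'}_{i_k},y^{J'}_{i_k+1}$, so the right-hand side is $B_{J'}[\zeta]/\big((\zeta-y^{J'}_{i_k})(\zeta-y^{J'}_{i_k+1})\big)\cong B_{J'}\otimes_{(\htor^*)^{i_k}}\htor^*=B_J$, with $\zeta$ matched to the new right variable $y^J_{i_k}$. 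The left $\htor^*$-module structure is transported along $q^*$ together with the base case; for the right structure one checks that $\rho_J^*$ agrees with $q^*\circ\rho_{J'}^*$ on $(\htor^*)^{i_k}$ and sends $x_{i_k}$ to $\zeta$, which follows by identifying the tautological line $\mathcal O_{\mathbb{P}(\mathcal E)}(-1)$ with the line bundle associated to the character $\chi_{i_k}$ and the torsor $\BB_{\tT\times\tT}(w_J)\to\BTS_\tT(w_J)$.

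The main obstacle is precisely this last matching of the right $\htor^*$-structure: one has to keep careful track of the two torus factors (the Borel $\tT$ versus the residual right $\tT$), of equivariant versus non-equivariant Chern classes, and of the duality convention relating $\chi_j$ to $\pm c_1$ of the tautological bundles, in order to be sure that the projective-bundle generator $\zeta$ is the $i_k$-th right variable and not $y^J_{i_k+1}$ or a sign variant. Granting the base case (for which the Appendix and \cite{webwil} are available), the iteration itself is routine; the only other point needing a word is the verification that $G_{i_k}/\tT$ is $\tT$-equivariantly $\mathbb{P}(\C_{i_k}\oplus\C_{i_k+1})$, and hence that $q$ is the asserted projective bundle — a standard ingredient of the Bott–Samelson picture.
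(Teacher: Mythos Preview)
Your argument is correct and is essentially the same classical Bott--Samelson computation the paper invokes: both rest on the iterated $\mathbb{P}^1$-bundle structure $\BTS_\tT(w_J)\to\BTS_\tT(w_{J'})$ and the projective bundle formula. The only difference is packaging---the paper quotes the resulting recursive presentation in generators $\delta_t$ from \cite{nitubroken2} and then writes down the explicit change of variables $\delta_t\mapsto X_{j_t,i_t}-X_{j_t-1,i_t}$ to the presentation~(\ref{Xform}), whereas you carry out the induction by hand and land directly in the tensor-product form $B_{J'}\otimes_{(\htor^*)^{i_k}}\htor^*$; the paper's route has the small practical advantage that the classes $\delta_t$ are reused immediately afterward to describe the edge maps $\iota^*$ and $\iota_*$.
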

\vspace{-0.3cm}
\begin{proof} An explicit formula, given in a recursive form in \cite{nitubroken2} through a classical argument of Bott and Samelson, is
\begin{equation}\label{deltaform}\htor^*(\BTS_\tT(w_J))=\frac{\Z[x_1,\dots,x_r,\delta_1,\dots,\delta_k]}{\left\langle \delta_t^2 + \left(x_{i_t}-x_{i_{t+1}}+ 2 \sum_{s<t,\, i_s=i_t} \delta_s -\sum_{s<t,\, |i_s-i_t|=1} \delta_s \right)\delta_t\right\rangle}\, ,\end{equation}
with left and right actions given respectively by
\begin{align*}
l:\htor^*&\to \htor^*(\BTS_\tT(w_J)) & r:\htor^*&\to \htor^*(\BTS_\tT(w_J))\\
x_i&\mapsto x_i, & y_i & \mapsto y_i= x_i +  \hat\delta_i-\hat\delta_{i-1},
\end{align*}
 where $\hat\delta_i=\sum_{i_t=i} \delta_j$ is the image of $S_{\sigma_i}$ under the multiplication map. An $\htor^*$-bimodule (and algebra) isomorphism onto $B_J$ as prescribed in the formula (\ref{Xform}) is defined on generators as follows: if $J=(i_1,\dots,i_k)=(l_{j_1},\dots,l_{j_k})$ as a subword of $I$, then we map $\delta_t\mapsto X_{j_t,i_t}-X_{j_t-1,i_t}$ .
\end{proof}
 
\begin{remark}
Under the isomorphism in the proof, the expression in parenthesis in (\ref{deltaform}) is
 $$x_{i_t}-x_{i_{t+1}}+ 2 \sum_{s<t,\, i_s=i_t} \delta_s -\sum_{s<t,\, |i_s-i_t|=1} \delta_s=X_{j_t-1,i_t}-X_{j_t-1,i_t+1}.$$ Using the notation $$[\alpha_i]_t:=X_{j_t-1,i}-X_{j_t-1,i+1},$$ the relations in the presentation $(\ref{deltaform})$ take the compact form $(\delta_t+[\alpha_{i_t}]_t)\delta_t=0$.
\end{remark}

For a positive edge $J_1\leq J_2$, say $i_t>0$, $J_2=(i_1,\dots,i_t,\dots,i_k)$ and $J_2=J_1\setminus i_t$, the map $B_{J_2}\to B_{J_1}$ appearing in the Rouquier complex (\ref{rouquier}) has a simple topological lift: from the inclusion $\tT\hookrightarrow G_{i_s}$ we induce
 $$\BTS_\tT(w_{J_1})=G_{i_1}\times_{\tT}\cdots\times_\tT\tT\times_\tT\cdots G_{i_k}\xlongrightarrow{\iota} G_{i_1}\times_{\tT}\cdots\times_\tT G_{i_t}\times_\tT\cdots \times_{\tT}G_{i_k}=\BTS_\tT(w_{J_2}).$$ The $\htor^*$-algebra map $$\iota^*:\htor^*(\BTS_\tT(w_{J_2}))\to \htor^*(\BTS_\tT(w_{J_1}))$$ in cohomology is easily described as killing $\delta_{t}$ while preserving all the other variables. This map coincides with the positive edge map $B_{J_2}\to B_{J_1}$ through the relevant isomorphism.
 
 When $J_1\leq J_2$ is a negative edge, say $i_t<0$, $J_1=(i_1,\dots,i_t,\dots,i_k)$ and $J_2=J_1\setminus i_t$, we get $B_{J_2}\to B_{J_1}$ by starting with $\tT\hookrightarrow G_{i_k}$ and inducing from an inclusion $\iota:\BTS_\tT(w_{J_2})\to\BTS_\tT(w_{J_1})$. In this case, we have a wrong-way map $\iota_*$ in cohomology that can be defined via the Thom-Pontryagin construction applied to the normal bundle of the inclusion $\iota$. More specifically, we have a map of $\htor^{*}(\BTS_\tT(w_{J_1}))$-modules (in particular $\coebs$-modules) $$\iota_*:\htor^{*}(\BTS_\tT(w_{J_2}))\to \htor^{*+2}(\BTS_\tT(w_{J_1}))$$ 
that is completely determined by
$$\iota_*(1)=\delta_t+[\alpha_{i_t}]_t,\qquad \iota_*(v\cdot\iota^*(u))=\iota_*(v)\cdot u.$$
These conditions are sufficient to identify $\iota_*$ with the negative edge map $B_{J_2}\to B_{J_1}$.

To give an interpretation of the Hochschild complex $\choch(B_J)$ and the respective Hochschild homology, we turn our attention to the cohomology of $\BB_\tT(w_J)$. We summarize the relevant results in \cite{nitubroken2} in the following proposition. For convenience, when the notation for a space already indicates the acting group in a subindex, like in $\BB_\tT$ or $\BTS_\tT$, we simply add the lowercase $h$ to indicate Borel construction.

\begin{proposition}\label{serressprop} The Serre spectral sequence associated to the fibration $$\tT\to \BB_{h\tT}(w_J)\to \BTS_{h\tT}(w_J)$$ has as its $E_2$-page the Koszul complex that computes Hochschild homology of the bimodule $B_J=\htor^*(\BTS_\tT(w_J))$. It collapses at $E_3$, where it is free over $\Z$. In consequence, $\htor^*(\BB_\tT(w_J))$ is non-canonically isomorphic to $\hoch(B_J)$ as abelian groups.
\end{proposition}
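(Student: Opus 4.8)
The plan is to run the Serre spectral sequence of the displayed fibration and match it, page by page, with the Koszul complex $\choch(B_J)$ computing Hochschild homology. First I would note that applying the Borel construction $\E\tT\times_\tT(-)$ to the $\tT$-equivariant principal $\tT$-bundle $\BB_\tT(w_J)\to\BTS_\tT(w_J)$ (where the left factor carries the residual conjugation action and the structure group is the right-multiplication torus) again produces a principal $\tT$-bundle, hence a fibration with connected fibre $\tT\simeq(S^1)^r$. Since the monodromy of a principal bundle acts on the fibre through left translations, which are homotopic to the identity because $\tT$ is connected, the local system on $H^*(\tT)$ is trivial; and since $H^*(\tT;\Z)=\Lambda_\Z(\hat x_1,\dots,\hat x_r)$ is $\Z$-free, the universal coefficient theorem gives
$$E_2^{p,q}=H^p(\BTS_{h\tT}(w_J);\Z)\otimes_\Z\Lambda^q_\Z(\hat x_1,\dots,\hat x_r)=B_J\otimes\Lambda^q(\hat x_1,\dots,\hat x_r),$$
which as a bigraded abelian group is the module underlying $\choch(B_J)$.

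Next I would identify $d_2$ with the Hochschild differential $d_\mathsf{H}$. The Serre spectral sequence is multiplicative and $d_2$ is a derivation, so it is determined by the transgressions $d_2(\hat x_i)\in E_2^{2,0}=B_J^2$, which for a principal $\tT$-bundle are the components of the Euler class: $d_2(\hat x_i)=c^*(\xi_i)$, where $c\colon\BTS_{h\tT}(w_J)\to B\tT$ classifies the bundle and $\xi_i\in H^2(B\tT)$ is the $i$-th generator. The commuting square relating $\BB_\tT(w_J),\BTS_\tT(w_J)$ to $\Ur^c,\Ur/\tT$ is a Cartesian square of principal $\tT$-bundles and remains Cartesian after $\E\tT\times_\tT(-)$, so $c$ factors as $\BTS_{h\tT}(w_J)\xrightarrow{\mu_{h\tT}}(\Ur/\tT)_{h\tT}\xrightarrow{b}B\tT$, with $b$ classifying $(\Ur^c)_{h\tT}\to(\Ur/\tT)_{h\tT}$. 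Now $b^*(\xi_i)$ is the $\tT$-equivariant first Chern class of the $i$-th tautological line bundle on $\Ur/\tT$; in the Borel presentation recalled in the Appendix this is $y_i-x_i$ up to sign — crucially it is the \emph{difference} of the right and left structure maps precisely because $\tT$ acts by conjugation rather than one-sidedly (one can also pin it down by restriction to the finitely many $\tT$-fixed flags). Pulling back along $\mu_{h\tT}$, which sends $x_i\mapsto x_i$ and $y_i\mapsto y_i$ by the definition of the bimodule $B_J$, yields $d_2(\hat x_i)=\pm(x_i-y_i)$; after the harmless rescaling $\hat x_i\mapsto\pm\hat x_i$ this is exactly $d_\mathsf{H}$. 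Hence $(E_2,d_2)\cong\choch(B_J)$ as differential graded algebras and $E_3\cong\hoch(B_J)$.

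It then remains to show the spectral sequence degenerates at $E_3$ and that $E_3=\hoch(B_J)$ is $\Z$-free. Both I would obtain from the Bott--Samelson recursion behind (\ref{deltaform}) (cf. \cite{nitubroken2}): inducting on the length $k$ of $w_J$ via $\BB_\tT(w_J)=\BB_\tT(w_{J'})\times_\tT G_{i_k}$ and the auxiliary $\mathbb{P}^1=G_{i_k}/\tT$-bundles, one checks that $\htor^*(\BB_\tT(w_J))$ is $\Z$-free and computes its Poincar\'e series, and likewise that the Koszul homology $\hoch(B_J)$ is $\Z$-free with the same Poincar\'e series. Since $\bigoplus_{p,q}E_\infty^{p,q}$ is a subquotient of $E_3=\hoch(B_J)$ but must have total graded rank equal to that of $\htor^*(\BB_\tT(w_J))$, there is no room for higher differentials, so $E_3=E_\infty$. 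Finally $\htor^*(\BB_\tT(w_J))$ carries a finite filtration with associated graded $E_\infty=\hoch(B_J)$, which is $\Z$-free in each bidegree, so the filtration splits as graded abelian groups and yields the asserted non-canonical isomorphism $\htor^*(\BB_\tT(w_J))\cong\hoch(B_J)$.

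I expect this degeneration at $E_3$, together with the $\Z$-freeness of $\hoch(B_J)$, to be the main obstacle. It does not follow formally: already for a single Soergel bimodule $\hoch_{\geq2}(B_J)$ can contain classes that are not products of classes of Hochschild degree $0$ and $1$, so the vanishing of $d_3,d_4,\dots$ cannot be read off from low-filtration generation, and the argument genuinely uses the iterated $\mathbb{P}^1$-bundle geometry of the Bott--Samelson spaces — equivalently, an a priori computation of the Betti numbers of $\BB_\tT(w_J)$. By contrast, the identification of the $E_2$-page and of $d_2$ is routine once the classifying map of the principal $\tT$-bundle has been pinned down.
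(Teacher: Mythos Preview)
Your argument is correct and essentially supplies what the paper omits: the paper does not prove this proposition at all but states it as a summary of results from \cite{nitubroken2}, recording only the explicit description of $E_2$ and the formula $d_2(\hat x_i)=x_i-y_i=\hat\delta_{i-1}-\hat\delta_i$ immediately after the statement. Your identification of the $E_2$-page and of $d_2$ via the classifying map of the principal $\tT$-bundle (reduced, by the pullback square, to the universal case over $(\Ur/\tT)_{h\tT}$) lands exactly on the formulas the paper records, so on that part there is nothing to compare.

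For the collapse at $E_3$ and the $\Z$-freeness, your rank-comparison strategy is a legitimate route and you are right to flag it as the crux. One small point worth tightening: your argument needs the $\Z$-freeness of $\hoch(B_J)$ \emph{before} the rank count can force $E_3=E_\infty$ (a subquotient of a non-free abelian group can have the same rank without being the whole group). You gesture at obtaining this from the Bott--Samelson recursion, which is fine, but note that the bimodule recursion $B_J\cong B_{J'}\otimes_{(\htor^*)^{i_k}}\htor^*$ does not immediately split $\hoch$; one needs either the $\coebs$-module splitting $B_J\cong B_{J'}\oplus B_{J'}\{2\}$ (which the paper establishes later, in the proof of Lemma~\ref{tophochdeg}, case \emph{(iii)}) or an independent inductive computation of $\hoch(B_J)$. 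Either way the conclusion stands; just be aware that the freeness of $\hoch(B_J)$ is genuinely an input, not a byproduct, of the rank argument.
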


More explicitly,
\begin{equation}\label{serress}E_2=\htor^*(\BTS_\tT(w_J))\otimes\Lambda(\hat x_1,\dots,\hat x_r)\, , \end{equation}
where the $\hat x_i$ form the standard generators for $\HH^1(\tT)$, and the differential, being a derivation with respect to the exterior $\htor^*(\BTS_\tT(w_J))$-algebra structure, is completely determined by
$$d_2(\hat x_i)= x_i-y_i=\hat \delta_{i-1} -\hat\delta_i.$$

The identification $\htor^*(\BB_\tT(w_J))\cong\hoch(B_J)$ as abelian groups is non-canonical over $\Z$ but can be made canonical over $\Q$. As $\htor^*$-modules, we have an isomorphism between $\hoch(B_J)$ and the page $E_\infty=E_3$ in the spectral sequence, which is an associated graded $\gr_{Serre}\left[ \htor^*(\BB_\tT(w_J))\right]$ with respect to the \emph{Serre filtration} $\{\mathcal F^\bullet \htor^*(\BB_\tT(w_J))\}$. This filtration is induced from the cellular filtration of the base $\BTS_{h\tT}(w_J)$. The base-fiber bigrading in $E_3^{*,*}$ coincides with the Soergel-Hochschild bigrading in $\hoch(B_J)$. 

We observe that given positive or negative edges $J_1\leq J_2$, we still have corresponding inclusions $\iota:\BB_\tT(w_{J_1})\to \BB_\tT(w_{J_2})$ (reversed in the negative case) induced from $\tT\hookrightarrow G_{i_t}$. The induced maps $\iota^*$ or $\iota_*$ still give the correct positive and negative edge maps at the level of Hochschild homology.

\subsection{Spectrum of strict broken symmetries and INS conditions} 

Let $w_I$ be a braid word in $r$ strands, with possibly negative indexes, and let $\II$ be its poset of subwords.
The discussion from the previous section expresses the cubical diagram $\hoch(B_\bullet):\II^{op}\to\mathsf{Mod}_{\htor^*}$ in terms of Borel equivariant cohomology of spaces $\BB_\tT(w_J)$ and maps between them.

In \cite{nitubroken1}, Kitchloo defines a \emph{filtered $\Ur$-equivariant spectrum of strict broken symmetries} $s\BB(L)$ for the link $L$ represented by the closure of $w_I$. It is built from homotopy colimits of the cubical diagram $\BB_\tT(w_\bullet)$. With adequate suspensions and desuspensions, analogous to the overall shifts usually needed in the construction of algebraic link invariants, the filtered object $s\BB(L)$ is itself an invariant of $L$ up to a notion of \emph{quasi-equivalence}, a sort of chain homotopy equivalence at the level of associated graded objects.

One may apply equivariant cohomology theories to the filtered object $s\BB(L)$ to obtain algebraic link invariants. In the case of Borel equivariant (singular) cohomology, the complex  $(\bigoplus_J\hoch(B_J)\{n_J\},d_v)$ appears as the $E_1$-page of the spectral sequence associated to the filtration of $s\BB(L)$, so that $E_2$ is HOMFLY-PT homology and all further pages are also link invariants. In order for a more general cohomology theory $\E^*$ to produce algebraic invariants following this same recipe, some formal conditions related to braid and Markov moves should be met.

To make this precise, let $\E^*$ be a generalized cohomology theory (possibly twisted in the sense of Section \ref{twisted}). The homology of the complex $\bigoplus_J\E_\tT^{*+n_J}(\BB_\tT(w_J))$ is a link invariant (up to overall shift) provided that the maps below are injective, null or surjective  (abbreviated \emph{INS}) for all $r$ and all braid words $w_J\in\Br(r)$ of the indicated forms. We refer to these as \emph{INS conditions}.
\begin{enumerate}
\item For a positive edge $J_1\leq J_2$ with the sequence $J_1=(i_1,\dots,i_{l-1})$ not containing either $\pm(r-1)$ and $J_2=(i_1,\dots,i_{l-1},r-1)$, the inclusion $\BB_\tT(w_{J_1})\to\BB_\tT(w_{J_2})$ induces an INS homomorphism $\iota^*$ in $\E_\tT^*$.\\

\item For a negative edge $J_1\leq J_2$ with the sequence $J_2=(i_1,\dots,i_{l-1})$ not containing either $\pm(r-1)$ and $J_1=(i_1,\dots,i_{l-1},-(r-1))$, the inclusion $\BB_\tT(w_{J_2})\to\BB_\tT(w_{J_1})$ induces an INS (wrong-way) homomorphism $\iota_*$ in $\E_\tT^*$.\\

\item For positive $s$ and a sequence $J=(i_1,\dots,i_{l-3},s,s-1,s)$ or $J=(i_1,\dots,i_{l-3},s-1,s,s-1)$, let $\mathcal{X}=G_{i_1}\times_{\tT}\cdots\times_{\tT}G_{i_{l-3}}\times_{\tT}  (\tT^{s-2}\times  \mathrm U (3)\times \tT^{r-s-1})$. Then the map $\BB_\tT(w_J)\to \mathcal{X}$ that does multiplication in the last three components induces an INS homomorphism in $\E^*_\tT$.\end{enumerate}

In the particular case of singular cohomology $\HH^*$, induced homomorphisms of all forms (1), (2) and (3) are injective. We will return to INS conditions for other theories in later sections.

\section{The Khovanov-Rozansky complex and Rasmussen spectral sequences}

In this section we review Khovanov-Rozansky homology via Koszul (exterior) complexes in such a way that it becomes easily comparable with the geometric formalism. The comparison will be made explicit in subsequent sections.

 \subsection{Khovanov-Rozansky link homologies from Koszul complexes}  Khovanov and Rozansky (\cite{krmatrix}) resolve link diagrams in a similar way to the cubes of singular diagrams already discussed. Associated to each resolution $J$ there is a Koszul complex $(K_p(D_J),d_+)$ with Koszul generators prescribed by the various crossings and marks in the singular diagram $D_J$. There is, however, a second differential $d_-$ in this Koszul complex given by multiplication by a certain $d_+$-cycle, the choice of which relates to the subindex $p$. Put together, $(K_p(D_J),d_+,d_-)$ forms a double complex. Now considering all resolutions of a braid diagram at the same time, there is also a cubical differential $d_v$ which commutes with $d_+$ and $d_-$, and thus one has the entire Khovanov-Rozansky triple complex
$$KhR_p(D)=\left(\bigoplus_{J\in\II} K_p(D_J),d_+,d_-,d_v\right).$$
 The more general formalism of \emph{matrix factorizations} features eminently in the original references. These can be thought of as double complexes in which the differentials need not anti-commute but instead they satisfy an equation $d_+d_-+d_-d_+=w\cdot\mathrm{Id}$. We will avoid such language because we deal only with closed diagrams, in which case $w=0$ and therefore $(K_p(D_J),d_+,d_-)$ is an actual double complex. Most formulas and calculations will be described using the Koszul complex structure on $K_p(D_J)$. We generically refer to versions of this construction $KhR_p(D)$ as \emph{KR-complexes}. 
 
 The construction depends on the choice of a graded commutative coefficient ring $\R$ and a potential $p(X)\in\R[\![X]\!]$. The variable $X$ here has cohomological degree 2, and power series are taken in the graded sense so that $\R[\![X]\!]=\R[X]$ except only possibly when $\R$ is non-trivial in negative degrees. The coefficient rings $\R=\Z, \Q$ are thought of as living in degree 0. The original Khovanov-Rozansky $\sln$-link homology categorifying the $\sln$-link polynomial is obtained by taking homologies of $KhR_p(D)$ in the case $p(X) = X^{n+1}\in \Q[X]$. If one dispenses with the differential $d_-$, the other two differentials $d_+$ and $d_v$ produce HOMFLY-PT homology with coefficients extended into $\R$.  A primary goal of this section is to review how the KR-complex can be reduced, in a sense, to the complex $CH(D)$ that we previously had for HOMFLY-PT homology.  The differentials $d_+$ and $d_v$ in $KhR_p(D)$ correspond to the Hochschild and cubical differentials $d_\mathsf{H}$ and $d_v$ in $CH(D)$, and we keep track of the behavior of the extra differential $d_-$ under such reduction. The precise relation between $\sln$ and HOMFLY-PT is the so called Rasmussen spectral sequence, which will be also presented.

\begin{figure}[h!]
\centering\vspace{0.3cm}
\includegraphics{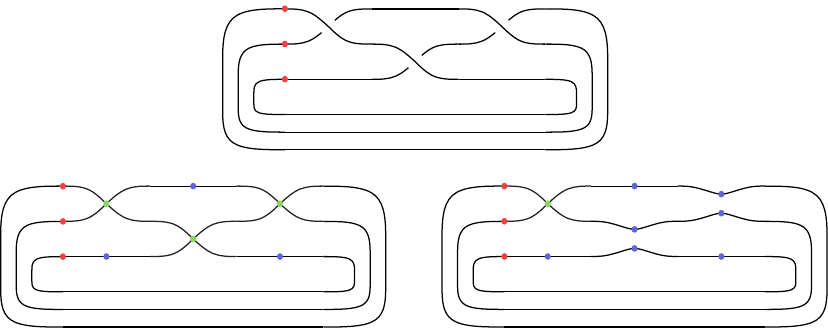}\vspace{0.3cm}
\caption{A closed braid diagram on top. On bottom we have closed singular diagrams, which correspond to the resolutions $s=000$ and $s'=011$, out of eight possible. See Figure \ref{resol} below for resolutions at a single crossing.}
\label{closeddiags}
\end{figure}

Let us begin with the definition of $K_p(D_J)$ for a singular diagram $D_J$. We will make use of the graded power series ring $\R[\![X_e]\!]$ with one variable of cohomological degree 2 for each edge $e$ of the diagram. It is convenient to use the labeling scheme  indicated in Figure \ref{vertexlabel} when working locally around a mark $m$ or a crossing $c$. This stands in contrast with the previously established global labeling by layer and strand.

 \begin{figure}[h!]
 \includegraphics{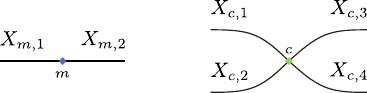}
\centering\caption{Local labeling for edge variables adjacent to a vertex.}\label{vertexlabel}
\end{figure}\phantom{a}\\
Let us fix notation for three kinds of elements in $\ringD$ which correspond to the three kind of relations associated to marks and singular crossings in the Soergel bimodule. These are 
$$\Gamma_m:= X_{m,2}-X_{m,1}$$ for each mark $m$, as well as $$\Upsilon_c:=X_{c,3}+X_{c,4}-X_{c,1}-X_{c,2}\quad\mbox{and} \quad\Xi_c:=X_{c,3}X_{c,4}-X_{c,1}X_{c,2}$$ for each singular crossing $c$.

\let\oldgamma\gamma
\renewcommand{\gamma}{\bm{\oldgamma}}
\let\oldxi\xi
\renewcommand{\xi}{\bm{\oldxi}}
\let\oldupsilon\upsilon
\renewcommand{\upsilon}{\bm{\oldupsilon}}
\let\oldkappa\kappa
\renewcommand{\kappa}{\bm{\oldkappa}}

Define $K_p(D_J)$ be the Koszul complex of all the $\Gamma_m$, $\Xi_c$  and $ \Upsilon_c$ with coefficients in $\ringD$, where $m$ runs through all the marks and $c$ runs through all the singular crossings in $D_J$. The Koszul differential is denoted by $d_+$, so we have that
$$K_p(D_J)=\ringD\otimes \Lambda(\gamma_m,\xi_c,\upsilon_c)$$
is an $\ringD$-algebra in which the Koszul generators $\gamma_m$ and $\upsilon_c$ have cohomological degree 1, while $\xi_c$ has cohomological degree 3. The differential $d_+$ is a derivation (in the graded sense) determined by
$$d_+(\gamma_m)=\Gamma_m,\quad d_+(\xi_c)= \Xi_c,\quad d_+(\upsilon_c) = \Upsilon_c.$$

The second differential $d_-$ depends on the choice of homogeneous potential $p(X)$ in $\R[\![ X]\!]$. Let $\pi$, $u_1$ and $u_2$ be multi-variable power series with coefficients in $\R$ satisfying
\begin{align*}
p(X)-p(Y)  &= (X-Y)\pi(X,Y), \\
p(X)+p(Y)-p(Z)-p(W)  = & (X+Y-Z-W)u_1(X,Y,Z,W)\\&+(XY-ZW)u_2(X,Y,Z,W).
\end{align*}

Whenever variables can be inferred from the context, we omit inputs for $u_1$ and $u_2$ and $\pi$, or indicate them in superscripts. In light of this, consider the $d_+$-cycle in $K_p(D_J)$ given by
$$\kappa_J:=\sum_m \pi^m\gamma_m+\sum_c u^c_1\upsilon_c+u^c_2\xi_c,$$
with the first sum running over all matchings $m$ and the second running over all crossings $c$. Let $d_-:K_p(D_J)\to K_p(D_J)$ be defined by $d_- (f)=\kappa_J\wedge f$. Notice that $d_-$ anticommutes with $d_+$ by virtue of $\kappa_J$ being a $d_+$-cycle.  We also remark that $\kappa_J$ is homogeneous both in cohomological and Koszul grading.

Now, given a link presented as the closure of a braid word $w$, we want to produce a cube of resolutions $$K_p(D_\bullet):\II^{op}\to\Ch(\ringD).$$
 Let us describe the edge maps. If an edge $J_1\leq J_2$ is positive, then generators $\xi_c$ and $\upsilon_c$ corresponding to a singular crossing disappear, and in turn we get generators $\gamma_{m_1}$ and $\gamma_{m_2}$ corresponding to two new markings. For a negative crossing we have the reciprocal situation. The total number of edges does not change, and the local picture at the crossing involves four variables which could be labeled either by $X_{c,1},X_{c,2},X_{c,3},X_{c,4}$ or $X_{m_1,1},X_{m_2,1},X_{m_1,2},X_{m_2,2}$ depending on whether we look at the singular or non-singular resolution. For convenience in the current definition, let us label these variables uniformly by $X_1,X_2,X_3,X_4$.
Both complexes $K_p(D_{J_1})$ and $ K_p(D_{J_2})$ contain the sub-algebra $A$ generated by $\ringD$ and all the $\gamma_m$, $\upsilon_c$ and $\xi_c$ not involved in the crossing, so that $K_p(D_{J_1})$ and $ K_p(D_{J_2})$ are both rank 4 free modules over $A$.  The positive edge maps $d_e:K_p(D_{J_1})\to K_p(D_{J_2})$ are determined as $A$-module maps by
\begin{align*}1&\mapsto 1+ \frac{u_1+X_1u_2-\pi^{2,4}}{X_{3}-X_{1}}\gamma_{m_1}\wedge\gamma_{m_2}\\
\upsilon_c&\mapsto \gamma_{m_1}+\gamma_{m_2}\\
\xi_c&\mapsto X_4\gamma_{m_1} +X_1\gamma_{m_2}\\\upsilon_c\wedge\xi_c&\mapsto (X_1-X_4)\gamma_{m_1}\wedge\gamma_{m_2}.
\end{align*}
The negative edge maps $d_e:K_p(D_{J_1})\to K_p(D_{J_2})$ are determined as $A$-module maps by
\begin{align*} 1&\mapsto X_{1}-X_{4}-\frac{u_1+X_1u_2-\pi^{2,4}}{X_{3}-X_{1}}\upsilon_c\wedge\xi_c\\ \gamma_{m_1}&\mapsto X_1\upsilon_c-\xi_c \\
\gamma_{m_2}&\mapsto\xi_c-X_4\upsilon_c\\
\gamma_{m_1}\wedge\gamma_{m_2}&\mapsto \upsilon_c\wedge\xi_c.
\end{align*}
These maps commute with both $d_+$ and $d_-$. These edge maps assemble into a cubical differential $d_v$ just as in the previous section.

\begin{definition}
Let $D$ be a braid diagram, $\R$ a graded commutative ring and $p(X)\in\R[\![X]\!]$ a homogeneous potential. The \emph{KR-complex} $KhR_p(D)$ is the flattening of the cube $K_p(D_\bullet)$ into 
$$KhR^\R_p(D)=\bigoplus_{J\in\II} K_p(D_J)\{n_J\},$$
with anti-commuting differentials $d_+$, $d_-$ and $d_v$. When $\R=\Q$, we omit it from the notation.
\end{definition}

Let us record how the KR-complex computes $\sln$-link homology and HOMFLY-PT homology. Since we are dealing with triple complexes, we lighten the notation for homologies by writing $H_+=H(-,d_+)$ and similarly for $H_-$ and $H_v$. We also have total homologies $H_{+-}=H(-,d_++d_-)$ and $H_{v-}=H(-,d_v+d_-)$.

\begin{theorem}\label{linkhomologies}
Let $\R=\Q$ and $p(X)=X^{n+1}$, and let $L$ be a link presented by the closure of a braid diagram $D$. 
\begin{enumerate}
\item \cite{krmatrix} After adequate degree shifts, the homology 
$$H_{v}H_{+-}(KhR_p(D)),$$
is a bigraded link invariant categorifying the $\sln$ polynomial.
\item \cite{krmatrix} After adequate degree shifts, the homology 
$$H_{v}H_+(KhR_p(D)),$$
is a triply graded link invariant categorifying the HOMFLY-PT polynomial.
\item \cite{rasmussen} There are canonical isomorphisms
$$H_{v}H_{+-}(KhR_p(D))\cong H_{v-}H_{+}(KhR_p(D))\cong H_{v}H_{-}H_{+}(KhR_p(D))$$
giving alternative definitions of (1) from the same complex.
\end{enumerate}
\end{theorem}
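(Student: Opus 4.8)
Parts (1) and (2) are the founding theorems of \cite{krmatrix}, so the only work is to match the Koszul-complex bookkeeping used here with the matrix factorizations of that reference. The key observation is that for a \emph{closed} singular diagram $D_J$ the total potential vanishes — the potential contributed by the outgoing edge of each closing arc cancels that contributed by the incoming edge — so the matrix factorization that \cite{krmatrix} attaches to $D_J$ is an honest $\Z$-graded complex, and upon unwinding the definitions it is exactly $(K_p(D_J),d_+)$: the exterior generators $\gamma_m$, $\upsilon_c$, $\xi_c$ together with the differential $d_+$ reproduce the Koszul factorizations attached to marks and wide edges, while multiplication by the $d_+$-cycle $\kappa_J$ is the potential-dependent differential $d_-$. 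One then verifies that the edge maps written above agree with the maps $\chi_0$, $\chi_1$ of \cite{krmatrix} under this identification, which is a direct comparison of the two $A$-module formulas. Granting the dictionary, invariance of $H_vH_{+-}(KhR_p(D))$ under the Markov moves and the identification of its graded Euler characteristic with $P_n$ are precisely the content of \cite{krmatrix}; discarding $d_-$ (equivalently setting $p=0$) yields the parallel statements for $H_vH_+(KhR_p(D))$ and $P_\infty$.

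For part (3) the plan is to exploit that $(KhR_p(D),d_+,d_-,d_v)$ is a triple complex whose three differentials anticommute pairwise, so that $H_+$ is an ``inner'' homology and the induced pair $(d_-,d_v)$ makes $H_+(KhR_p(D))$ a genuine bicomplex. I would split the chain of isomorphisms into two moves. The isomorphism $H_{v-}H_+(KhR_p(D))\cong H_vH_-H_+(KhR_p(D))$ is a degeneration statement: the spectral sequence of the bicomplex $(H_+(KhR_p(D)),d_-,d_v)$ filtered by the cubical grading has $E_1$-page $H_-H_+(KhR_p(D))$, $E_2$-page $H_vH_-H_+(KhR_p(D))$, and abuts to $H_{v-}H_+(KhR_p(D))$ — convergence being automatic because the cube $\II$ is finite — and the claim is that it degenerates at $E_2$. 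The isomorphism $H_vH_{+-}(KhR_p(D))\cong H_{v-}H_+(KhR_p(D))$ I would obtain ``one level down'', by first proving the local statement that for each fixed $J$ the bicomplex $(K_p(D_J),d_+,d_-)$ has its spectral sequence degenerate at $E_2=H_-H_+(K_p(D_J))$, so that $H_{+-}(K_p(D_J))\cong H_-H_+(K_p(D_J))$, and then observing that this isomorphism is natural in $J$ and hence compatible with assembling the cube and applying $H_v$; chaining this with the previous move makes all three terms isomorphic. Both degenerations are exactly what is established in \cite{rasmussen}: the mechanism is that for $p(X)=X^{n+1}$ there is a $\delta$-grading — a fixed integer combination of the cohomological and Koszul degrees — which $d_+$ and $d_v$ preserve but $d_-$ strictly shifts, together with the fact that at each singular diagram $H_+(K_p(D_J))$ is computed explicitly (a $\mathrm{Tor}$ over the relevant edge ring, flat in the directions that matter) with $d_-$ acting by a concrete multiplication.

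The step I expect to be the genuine obstacle is this $E_2$-degeneration. The $\delta$-grading by itself only forces the higher spectral sequence differentials to strictly shift $\delta$; to rule them out one must show that a cocycle representing a higher-page class can always be corrected by $d_+$- and $d_-$-boundaries, and this is precisely where the explicit local structure of $H_+(K_p(D_J))$ — and the particular form of $d_-$ upon it — becomes indispensable. Rather than reproduce this computation, I would import it from \cite{rasmussen}.
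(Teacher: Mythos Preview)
Your proposal is essentially correct: this theorem is stated in the paper as a citation to \cite{krmatrix} and \cite{rasmussen}, with no proof supplied, and your dictionary for parts (1)--(2) together with your two-move decomposition of part (3) are both sound outlines of what those references do.

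One point of comparison is worth making. For the degeneration giving $H_{v-}H_+\cong H_vH_-H_+$, the mechanism the paper highlights (in its discussion of the Rasmussen spectral sequence in Section~2.2) is not a $\delta$-grading argument but the sharper lemma that $H_-H_+(KhR_p(D))$ is concentrated entirely in top Hochschild degree $r$. Once that is known, the spectral sequence $\prescript{I}{}E_1^{*,*}=H_-H_+\Rightarrow H_{v-}H_+$ lives on a single horizontal line, so $d_1=d_v$ is forced to be the last nonzero differential and collapse at $E_2$ is immediate for bidegree reasons---no correction-of-cocycles argument is needed. This ``top Hochschild degree'' lemma is the actual workhorse, and it is precisely what the paper goes on to reprove integrally in Lemma~\ref{tophochdeg}. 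Your $\delta$-grading sketch gestures in the right direction but does not isolate this key fact, which is why you end up anticipating an obstacle that, with the right lemma in hand, simply does not arise.
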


Literal (2) is already suggestive of the relation between $(KhR^\Z_p(D),d_+,d_v)$ and the complex $(CH(D),d_{\mathsf{H}},d_v)$. Given a potential $p(X)\in\Z[X]$, one defines the corresponding differential $d_-$ in $CH(B_J)$ by multiplication by the Hochschild cycle 
$$\kappa'_J=\sum_{j=1}^r \frac{p(x_j)-p(y_j)}{x_j-y_j}\hat x_j.$$
Notice that this formula does not depend on $J$, and in fact it can be seen as coming from a cycle $\kappa'\in\choch(\coebs)$. Since the $\choch(\coebs)$-algebra structure on the various $\choch(B_J)$ is respected by the edge maps $d_e$ that constitute $d_v$, this implies that $d_v$ and $d_-$ anti-commute.

\begin{proposition}\label{coeffshomflypt} 
Let $\R=\Z$ and $p(X)\in \Z[X]$, and let $D$ be a closed braid diagram for the link $L$. There is a morphism of triple complexes
$$(KhR^\Z_p(D),d_+,d_-,d_v)\to(CH(D),d_{\mathsf{H}},d_-,d_v)$$
which induces at each vertex an isomorphism $H_{+}(K_p(D_J))\cong \hoch(B_J)$. Therefore, as double complexes,
$$\left(H_+(KhR^\Z_p(D)),d_-,d_v\right)=\left(\bigoplus_{J\in\II}\hoch(B_J)\{n_J\},d_-,d_v\right)$$
\end{proposition}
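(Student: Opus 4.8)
The statement asserts the existence of a morphism of triple complexes from the Khovanov--Rozansky complex (over $\Z$, for a potential $p(X)\in\Z[X]$) to the Hochschild--Rouquier complex, inducing at each vertex $J$ of the cube an isomorphism $H_+(K_p(D_J))\cong\hoch(B_J)$. The plan is to work vertex by vertex first, and then check compatibility with the three differentials. At a fixed singular diagram $D_J$, the complex $K_p(D_J)=\ringD\otimes\Lambda(\gamma_m,\xi_c,\upsilon_c)$ is a Koszul complex on the elements $\Gamma_m$, $\Upsilon_c$, $\Xi_c$ in the polynomial ring on the edge variables $X_e$. The ring $R(D_J)\cong B_J$ is precisely the quotient of $\ringD$ by the ideal generated by all the $\Gamma_m$ (for non-closing marks), $\Upsilon_c$ and $\Xi_c$. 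The first step is therefore to observe that this family of elements forms a regular sequence in $\ringD$: each singular crossing relation pair $(\Upsilon_c,\Xi_c)$ cuts the dimension by two (these are the standard symmetric-polynomial relations $X+Y-Z-W$, $XY-ZW$), and each non-closing mark relation $\Gamma_m$ eliminates one variable by a linear substitution. Consequently $H_+(K_p(D_J))$ is concentrated in homological degree zero and equals $R(D_J)\cong B_J$ as a ring. But wait---the Koszul generators $\gamma_m$ attached to the \emph{closing} marks have $d_+(\gamma_m)=\Gamma_m=x_i-y_i$, and these are exactly the Hochschild/Koszul differentials $d_\mathsf{H}(\hat x_i)$. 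So one must be careful to split the generators into two groups: the ``internal'' ones (non-closing marks and all crossings), whose $d_+$ is acyclic onto the defining ideal of $B_J$, and the ``closing'' ones $\gamma_{m}$ for the $r$ closing marks, which after passing to homology in the internal part become exactly the exterior generators $\hat x_i$ of the Hochschild complex $\choch(B_J)=B_J\otimes\Lambda(\hat x_1,\dots,\hat x_r)$ with $d_\mathsf{H}(\hat x_i)=x_i-y_i$.

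Concretely, I would filter or simply decompose $d_+ = d_+^{\mathrm{int}} + d_+^{\mathrm{cl}}$, where $d_+^{\mathrm{cl}}$ is the part sending the closing generators $\gamma_{m_i}\mapsto x_i - y_i$ and $d_+^{\mathrm{int}}$ is the Koszul differential on the internal generators. Taking homology with respect to $d_+^{\mathrm{int}}$ first: since the internal relations form a regular sequence in $\ringD$, this homology is $R(D)/(\text{internal relations})$ in exterior degree $0$ on the remaining closing generators, i.e.\ exactly $B_J\otimes\Lambda(\hat x_1,\dots,\hat x_r)$ with the induced differential being $d_+^{\mathrm{cl}}$, which is now $d_\mathsf{H}$. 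A spectral sequence of the double complex (internal vs.\ closing), or simply the standard fact that for a regular sequence the Koszul complex is a resolution, then gives a quasi-isomorphism $K_p(D_J)\xrightarrow{\ \sim\ }\choch(B_J)$ of complexes with $d_+$ mapping to $d_\mathsf{H}$, hence $H_+(K_p(D_J))\cong H_\mathsf{H}(\choch(B_J))$ at the level of $d_+$-homology equals $\choch(B_J)$ in positive Hochschild degree and $B_J$ in degree zero; in particular $H_+(K_p(D_J))\cong\choch(B_J)$ as a complex under $d_\mathsf{H}$, giving $H_+H_\mathsf{H}$-wise the claimed $\hoch(B_J)$. (More precisely: the zeroth $d_+^{\mathrm{int}}$-homology is $\choch(B_J)$ as a $d_\mathsf{H}$-complex, and higher $d_+^{\mathrm{int}}$-homology vanishes, so $H_+(K_p(D_J))$ \emph{is} the complex $\choch(B_J)$.)

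Next I would verify that the chain map is compatible with the remaining two differentials. For $d_v$: the positive and negative edge maps on $K_p(D_\bullet)$ given by the explicit $A$-module formulas must be shown to descend, under the quasi-isomorphism $K_p(D_J)\to\choch(B_J)$, to the positive and negative edge maps of Section~1 (the identity map on $\Z[X_{j,l}]$ in the positive case, multiplication by $X_{j,|l_j|}-X_{j+1,|l_j|+1}$ in the negative case). This is a local computation at a single crossing: in the positive edge map, $1\mapsto 1+(\cdots)\gamma_{m_1}\wedge\gamma_{m_2}$, and after passing to $d_+$-homology the correction term $\gamma_{m_1}\wedge\gamma_{m_2}$ becomes a boundary (since $\Upsilon_c$, $\Xi_c$ become zero), so the induced map is the identity on $B_J$; similarly for the negative edge map one recovers multiplication by $X_1-X_4$, matching the Section~1 formula $\iota_*(1)=\delta_t+[\alpha_{i_t}]_t$. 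For $d_-$: the key point is that the $d_+$-cycle $\kappa_J=\sum_m\pi^m\gamma_m+\sum_c u_1^c\upsilon_c+u_2^c\xi_c$ maps, under $K_p(D_J)\to\choch(B_J)$, to the Hochschild cycle $\kappa'_J=\sum_{j=1}^r\frac{p(x_j)-p(y_j)}{x_j-y_j}\hat x_j$. Indeed the internal generators $\upsilon_c,\xi_c$ (and the non-closing $\gamma_m$) die in $d_+$-homology, while the closing generators $\gamma_{m_i}\mapsto\hat x_i$, and the coefficient $\pi^{m_i}$, which satisfies $p(X)-p(Y)=(X-Y)\pi(X,Y)$, becomes exactly $\frac{p(x_i)-p(y_i)}{x_i-y_i}$ evaluated at the closing variables. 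Hence $d_-$ on $K_p$ induces multiplication by $\kappa'_J$, which is the $d_-$ on $\choch(B_J)$ by definition. Assembling over all $J\in\II$ with the shifts $\{n_J\}$ then yields the morphism of triple complexes, and the last displayed equality follows by applying $H_+$ vertex-wise.

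\textbf{Main obstacle.} The principal technical point is the regularity of the internal relation sequence in $\ringD$ (so that the Koszul complex on the internal generators really is a resolution of $B_J$ and $H_+^{\mathrm{int}}$ is concentrated in degree zero), together with the bookkeeping that cleanly separates the closing-mark Koszul generators from the rest so that the residual differential is precisely $d_\mathsf{H}$. Once that structural statement is in hand, the compatibility with $d_v$ and $d_-$ reduces to the local vertex computations indicated above, using crucially that the correction terms in the edge-map formulas are $d_+$-exact and that $\pi$ specializes to the divided difference $\frac{p(X)-p(Y)}{X-Y}$.
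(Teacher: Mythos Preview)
Your proposal is correct and follows essentially the same route as the paper's own proof: one separates the Koszul generators into the internal ones (non-closing marks and all crossings, whose relations form a regular sequence in $\Z[X_e]$ cutting out $B_J$) and the $r$ closing-mark generators, then defines the chain map $K_p(D_J)\to\choch(B_J)$ by $\gamma_{m_i}\mapsto\hat x_i$ for closing marks and all other exterior generators $\mapsto 0$, which is a quasi-isomorphism by regularity and is checked to intertwine $d_v$ and $d_-$. One small wording correction: the correction terms in the edge-map formulas (e.g.\ the $(\cdots)\gamma_{m_1}\wedge\gamma_{m_2}$ in the positive edge map) and the internal summands $u_1^c\upsilon_c+u_2^c\xi_c$ of $\kappa_J$ do not ``become boundaries'' in $d_+$-homology---they simply map to $0$ under the chain map, since $m_1,m_2$ are non-closing marks and $\upsilon_c,\xi_c$ are sent to $0$ by definition; this is how the paper phrases it, and it is slightly cleaner than invoking exactness.
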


\begin{proof}
This proof is done with more detail in \cite{khovanovtriply}. For a given resolution $D_J$, consider the sequence $\mathcal S$ formed by all $\Gamma_m$, $\Xi_c$ and $\Upsilon_c$, except for those $\Gamma_m$ corresponding to closing marks. The sequence $\mathcal S$ is regular. The ring $\Z[X_e]/\langle\mathcal{S}\rangle$ is isomorphic to the Soergel bimodule $B_{J}$, which is directly evidenced in the presentation (\ref{Xform}). This is a bimodule isomorphism when we take the left $\Z[x_1,\dots,x_r]$-module structure given by $x_i\mapsto X_{m_i,1}$ and the right module structure given by $x_i\mapsto X_{m_i,2}$, where $m_1,\dots,m_r$ are the closing marks. The quotient $\Z[X_e]\to B_J$ extends to a map of complexes
$$(K_p(D_J),d_+)\mapsto (\choch(B_J),d_{\mathsf{H}}),$$
with $\gamma_{m_i}\mapsto \hat x_i$ and all other exterior generators mapping to 0. This is a quasi-isomorphism by the regularity of $\mathcal S$. Thus $H_{+}(D_J)\cong\hoch(B_J)$.
When the Koszul generators coming from crossings map to 0, the edge maps $d_e$ in $KhR^\Z_p(D)$ simplify into those of $CH(D)$, which guarantees preservation of $d_v$. Moreover, the formula for $\kappa_J$ simplifies into the formula for $\kappa'_J$, which guarantees the preservation of $d_-$. \end{proof}

In view of this result, we replace the notation for the Hochschild differential $d_{\mathsf{H}}$ in $CH(D)$ for $d_+$. 

It is an interesting consequence of the Proposition that $d_v$ becomes homogeneous after taking $H_+$ homology. More precisely, the explicit formulas given above for positive and negative edge maps $d_e:K_p(D_{J_1})\to K_p(D_{J_2})$ do not respect cohomological or Koszul gradings due to the non-homogeneous terms in $d_e(1)$. Those terms vanish after applying the quasi-isomorphism $(K_p(D_J),d_+)\mapsto (\choch(B_J),d_{\mathsf{H}})$.

\begin{remark}
The Proposition can also be formulated over $\Q$ if one changes coefficients $CH(D)\otimes \Q$. As a matter of fact, the Hochschild homology $\hoch(B_j)$ is free over $\Z$ and
$$H_{v}H_{+}(CH(D)\otimes \Q)\cong H_{v}H_+(CH(D))\otimes \Q$$
is the same HOMFLY-PT homology computed by the KR-complex.
\end{remark}

We can now summarize the rational link homologies in the following definition.

\begin{definition}\label{deflinkhomologies}
Let $L$ be a link presented by a braid diagram $D$, and let $p(X)=X^{n+1}\in\Q [X]$. The \emph{Khovanov-Rozansky $\sln$ link homology} of $L$ is
$$KRH_n(L):=H_{v}H_{-}H_{+}(KhR_{p}(D))=H_{v}H_{-}H_{+}(CH(D))$$
The \emph{Khovanov-Rozansky triply graded link homology} or \emph{HOMFLY-PT homology} of $L$ is
$$KRH_\infty(L):=H_{v}H_{+}(KhR_{p}(D))=H_{v}H_{+}(CH(D)).$$
The latter is independent of the choice of $n$. 
\end{definition}

So far we have been working with the cubical, Hochschild and cohomological grading in $CH(D)$. The way we just defined them, these link homologies are only link invariants after overall grading shifts. An argument of Rasmussen, which we touch on in the following section, shows that the Hochschild grading collapses into a single level in $H_{-}H_{+}(CH(D))$. This is the reason why $KRH_n(L)$ is considered a bigraded invariant.

\subsection{Rasmussen spectral sequence and integral $\sln$ homology}
We have seen how integral HOMFLY-PT homology $H_{v}H_+(CH(D))$ is a link invariant after appropriate shifts. In view of Theorem \ref{linkhomologies} and Definition \ref{deflinkhomologies}, one could expect $H_{v}H_{-}H_+(KhR^\Z_p(D))$ or $H_{v-}H_+(KhR^\Z_p(D))$ to give integral $\sln$ link homology invariants for $p(X)=X^{n+1}\in\Z [X]$. 

A fruitful approach first taken by Rasmussen in \cite{rasmussen} over $\Q$ is to think about the spectral sequences associated to the double complex $(H_+(KhR_p(D)),d_-,d_v)$ for a link $L$ with diagram $D$. Let us consider the bigrading $(\gr_-,\gr_v)$ by Hochschild and cubical degree. Here $d_-$ can be thought of as a horizontal differential of degree $(1,0)$ and $d_v$ is the vertical differential of degree $(0,1)$. Filtering the complex horizontally affords one possible spectral sequence with signature
$$\prescript{I}{}E_1^{*,*}(D)=H_-H_+(KhR_p(D))\Longrightarrow H_{v-}H_+(KhR_p(D)).$$
Rasmussen proves that this spectral sequence collapses at the second page. In fact, $\prescript{I}{}E_1^{*,*}$ already lives in top Hochschild degree $r$, meaning that the spectral sequence is concentrated in a single line $\prescript{I}{}E_1^{r,*}$. Since $d_s$ has bidegree $(s,1-s)$ it follows that $d_1=d_v$ is the last non-trivial differential. The spectral sequence collapses at the second page, giving an isomorphism $\prescript{I}{}E_2^{*,*}=H_vH_-H_+(KhR_p(D))\cong H_{v-}H_+(KhR_p(D))$.

The vertical filtration gives another spectral sequence
$$\prescript{II}{}E_1^{*,*}(D)=H_vH_+(KhR_p(D))\Longrightarrow H_{v-}H_+(KhR_p(D)).$$
This is ordinarily referred to as the \emph{Rasmussen spectral sequence}. In the case $p(X)=X^{n+1}\in \Q[X]$, the Rasmussen spectral sequence relates the rational homologies $KRH_\infty$ and $KRH_n$. Notice that, since this is all over the field $\Q$, the extension problems are trivial and thus $\prescript{II}{}E_\infty^{*,*}(D)\cong H_{v-}H_+(KhR_p(D))=KRH_n(L)$. 

Integrally, one can start with the double complex $(H_+(KhR^\Z_p(D)),d_-,d_v)$ and use the fact that the integral counterpart of $\prescript{II}{}E_1^{*,*}$, denoted by $\prescript{II}{\Z}E_1^{*,*}$, is already a link invariant to conclude that all subsequent pages, as well as $H_{v-}H_+(KhR^\Z_p(D))$, are also link invariants. This is the approach taken by Krasner in \cite{krasnerintegral}, which leads him to define integral $\sln$-link homology as $\prescript{II}{\Z}E_\infty^{*,*}(D)$.

Over $\Z$, however, there may be non-trivial extension problems preventing the existence of isomorphisms $\prescript{I}{\Z}E_2^{*,*}(D)\cong H_{v-}H_+(KhR^\Z_p(D))\cong \prescript{II}{\Z}E_\infty^{*,*}(D)$. Part of the issue is the presence of integral torsion even in the most basic examples. To illustrate, let us compute the homology of the unknot represented by the trivial braid word $1\in\Br(1)$. We use the complex $CH(D)$ associated to the trivial diagram $D$, a circle in one strand with one closing mark. We have a cube with a single vertex $\choch(D_\emptyset)= \Z[x]\otimes\Lambda(\hat x)$ where $x_1=y_1=x$. The Hochschild differential is trivial while $\kappa'_\emptyset=p'(x)\hat x.$ Hence
 $$H_{v}H_{-}H_{+}(CH(D))=H_{-}(\choch(D_\emptyset))=\frac{\Z[x]}{(p'(x))}\hat x.$$
For $p(X)=X^{n+1}\in\Z[X]$, we get $\Z[x]/((n+1)x^n)$ in Hochschild degree 1. 

In the second easiest example, namely the word $1\in\Br(2)$, it turns out that
$\prescript{I}{\Z}E_1^{*,*}(D)=H_-H_+(KhR^\Z_p(D))$ already has nontrivial components in Hochschild degrees 1 and 2. This renders the basic idea used to prove the collapse of the rational $\prescript{I}{}E_*^{*,*}(D)$ unsuitable for the integral $\prescript{I}{\Z}E_*^{*,*}(D)$. In particular, it cannot be guaranteed that the second page $\prescript{I}{\Z}E_2^{*,*}(D)=H_vH_-H_+(KhR^\Z_p(D))$ is a link invariant. For this reason, an appropriate definition is:

\begin{definition}
\label{deflinkhomologiesint}
Let $L$ be a link presented by a braid diagram $D$, and let $p(X)=X^{n+1}\in\Z [X]$. The \emph{un-normalized integral Khovanov-Rozansky $\sln$ link homology} of $L$ defined by
$$KRH^\Z_n(L):=H_{v-}H_{+}(KhR^\Z_{p}(D)).$$
This is a link invariant after overall degree shifts, with value at the unknot given by
$$KRH^\Z_n(\unlink)=\frac{\Z[x]}{((n+1)x^{n+1})}.$$
The \emph{Krasner $\sln$ link homology} is the associated graded $\prescript{II}{\Z}E_\infty^{*,*}(D)$ from the Rasmussen spectral sequence. Both of these homologies coincide with the rational invariant $KRH_n(L)$ after rationalization.

\end{definition}

 Using the constructions from this chapter, the torsion $p'(x)$ we can impose on $\sln$ link homology of the unlink is constrained by the existence of the antiderivative $p(X)$ used in the formula for the class $\kappa'\in\choch(\coebs)$. Rationally, torsion $p'(x)=x^n$ could be directly obtained by using the potential $p(X)=\frac{1}{n+1}X^{n+1}$. To get torsion $p'(x)=x^n$ integrally we would need that the (a priori) rational class $\kappa'=\frac{1}{n+1}\sum_{j=1}^r \frac{x^{n+1}_j-y^{n+1}_j}{x_j-y_j}\hat x_j\in\choch(\coebs)\otimes\Q$ turned out to be integral. This is proved in the following section using equivariant cohomology, where this class has a special origin.

\section{Borel equivariant cohomology and $\sln$ link homology}\label{boreleqcohlink}

Algebraic $\sln$-link homology is computed through a complex having terms $\hoch(B_J)$ and differentials $d_-$ and $d_v$. In the topological setting, we already have an interpretation of $\hoch(B_J)$ as the equivariant cohomology $\htor^*(\BB_\tT(w_J))$, at least up to associated graded, with the $d_v$ differential induced by inclusions between different $\BB_\tT(w_J)$ or their suspensions. We now explore how $d_-$ differentials admit a geometric interpretation through the multiplication map $\BB_\tT(w_J)\to \Ur$. 

Let us review some facts about the equivariant cohomology of $\Ur$ from \cite{nitubroken2}. Denote $\coe=\htor^*(\Ur)$. We always think of $\Ur$  as a $\tT$-space by conjugation, so the set of $\tT$-fixed points in $\Ur$ is precisely the torus $\tT$. The map $\coe=\htor^*(\Ur)\to\htor^*(\tT)$ induced by inclusion $\iota:\tT\to\Ur$ is injective (see Appendix). We can then think of the ring $\coe$ as a subring of $\htor^*(\tT)=\HH^*(B\tT\times\tT)=\Z[x_1,\dots,x_r]\otimes \Lambda (\hat x_1,\dots,\hat x_r)$. We can also identify $\coe$ as Hochschild homology of $\coebs$.

Our main geometric input in this section is a map
$$ \lambda: \Ur_{h\tT}=E\Ur\times_\tT \Ur \longrightarrow \U := \colim \Ur.$$ 
Considering the concrete geometric model for $E\tT=E\Ur$ is as $r$-frames in $\C^\infty$, one defines $\lambda$ by taking an $r$-frame $F$ in $\C^\infty$ and a unitary matrix $g\in\Ur$ and sending the pair to the element in $\U$ that does the transformation prescribed by $g$ in the basis $F$, while doing the identity in the complement of $\mathrm{span}(F)$. To describe this map in cohomology, we first recall that $\HH^*(\U)$ is a Hopf algebra, exterior in the canonical primitive generators $\beta_0, \beta_1,\dots$ of degree $|\beta_n|=2n+1$.

\begin{proposition}(\cite{nitubroken2})
In cohomology, the map induced map by $\lambda$ is determined by further pullback along the maximal torus inclusion $\iota$,
$$\iota^*\lambda^*(\beta_n)=\sum_{i=1}^{r}x_i^n \hat{x_i}\in\htor^*(\tT).$$

\end{proposition}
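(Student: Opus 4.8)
The plan is to reduce the computation to a known description of the map $\lambda$ restricted to the torus, and then to identify the pullback of each primitive $\beta_n$ by a direct calculation in $\htor^*(\tT) = \Z[x_1,\dots,x_r]\otimes\Lambda(\hat x_1,\dots,\hat x_r)$. First I would consider the restriction of $\lambda$ along the fixed-point inclusion $\iota\colon \tT \hookrightarrow \Ur$, which after applying the Borel construction gives a map $E\tT\times_\tT \tT \to \U$. Since $\tT$ acts on itself by conjugation trivially, $E\tT\times_\tT\tT \simeq B\tT\times\tT$, and under the geometric model of $E\tT$ as $r$-frames the composite $B\tT\times\tT \to \U$ sends a frame $F$ together with a diagonal unitary $t=\mathrm{diag}(t_1,\dots,t_r)$ to the transformation acting by the scalar $t_j$ along the $j$-th frame vector and trivially on the orthogonal complement of $\mathrm{span}(F)$. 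This map therefore factors through the $r$-fold product $(\C P^\infty \times S^1)^{\times r} \to \U$ where the $j$-th factor $\C P^\infty\times S^1 = BS^1\times S^1 = \U(1)_{h\U(1)}$ maps to $\U$ by the $\U(1)$-analogue of $\lambda$; the factorization is by first including each line into a coordinate direction and then composing with block inclusions into $\U$, using that the images commute.

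The key step is then to compute the rank-one case: the map $\lambda_1\colon BS^1\times S^1 \to \U$ sending $(\ell, z)$ to the unitary acting by $z$ on the line $\ell$ and trivially elsewhere. Its effect on $\beta_n$ is classical and can be extracted from Bott periodicity or from the standard computation of $\HH^*(\U) \cong \HH^*(\colim_r \U(r))$ via the cell structure of $\U(r)$; concretely $\lambda_1^*(\beta_n) = x^n\hat x$ where $x\in\HH^2(BS^1)$ is the first Chern class and $\hat x\in\HH^1(S^1)$ is the fundamental class. This identity is exactly where the shape of the answer originates: the generator $\beta_n$ is dual to a cell whose attaching data records the degree-$n$ part of the $BS^1$-direction. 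I would cite the relevant statement in \cite{nitubroken2} (or reprove it) rather than grind through the cell chase here. Once this rank-one identity is in hand, functoriality and the multiplicativity of the Hopf-algebra structure give, for the product map, $\iota^*\lambda^*(\beta_n) = \sum_{i=1}^r p_i^*(x^n\hat x) = \sum_{i=1}^r x_i^n\hat x_i$, because $\beta_n$ is primitive and the map to $\U$ is built from commuting block inclusions, so the pullback of a primitive is the sum of the pullbacks along the individual factors.

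The main obstacle I anticipate is pinning down the rank-one computation $\lambda_1^*(\beta_n) = x^n\hat x$ with correct normalization of the primitive generators $\beta_n$, i.e.\ checking that the canonical primitive generators of $\HH^*(\U)$ used throughout the paper are the ones for which this clean formula holds (as opposed to differing by a sign, a scalar, or a decomposable correction term). Resolving this requires being careful about how $\HH^*(\U)$ is identified with the stable cohomology of the unitary groups and how the suspension/Bott map enters; the primitivity of $\beta_n$ is what makes the passage from rank one to rank $r$ formal, so the whole argument hinges on having the rank-one case stated in a compatible normalization. Everything after that—the factorization through $(BS^1\times S^1)^{\times r}$, the commuting-blocks observation, and summing over $i$—is routine.
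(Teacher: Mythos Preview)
The paper does not supply a proof of this proposition: it is stated with a citation to \cite{nitubroken2} and used as input, so there is no in-paper argument to compare against. Your proposed route---restricting to the torus, factoring $B\tT\times\tT\to\U$ through the $r$-fold product of rank-one maps $BS^1\times S^1\to\U$, computing the rank-one case $\lambda_1^*(\beta_n)=x^n\hat x$, and then using primitivity of $\beta_n$ under the $H$-space multiplication of $\U$ to sum over the factors---is correct and is essentially the standard way one proves this identity; it is also the kind of argument one expects to find in \cite{nitubroken2}. The only point requiring care, which you already flag, is the normalization of the generators $\beta_n$: the paper takes the $\beta_n$ to be the canonical primitive generators of $\HH^*(\U)$, and one must check that these are the ones for which $\lambda_1^*(\beta_n)=x^n\hat x$ holds without extraneous signs or scalars. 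Once that is pinned down the rest is, as you say, routine.
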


Let us fix $n$. We will simplify our notation by writing $\beta_n$ for $\lambda^*(\beta_n)\in\coe$ and also for any further pullback $m^*\lambda^*(\beta_n)\in\htor^*(X)$ to a $\tT$-space over $\Ur$, $m:X\to \Ur$. Since $\beta_n\in\HH^*(\mathrm U)$ is itself an exterior generator, we know $\beta_n^2=0$. In consequence, (left) multiplication by $\beta_n$ is always a differential.

\begin{definition}
Let $m:X\to\Ur$ be a $\tT$-space over $\Ur$. Define
$$\nHH_\tT^*(X)=H(\htor^*(X),\beta_n),$$
homology with respect to the differential given by left multiplication by $\beta_n$. 
In the particular case $X=\BB_\tT(w_J)$ for a positive braid word $w_J$, denote
$$\nHH^*_\tT(w_J):=\nHH^*_\tT(\BB_\tT(w_J)).$$
\end{definition}

We think of $\nHH^*_\tT$ as a cohomological invariant of $\tT$-spaces over $\Ur$. We refer to $\nHH^*_\tT$ as an \emph{algebraically twisted} version of $\htor^*$. Note that the induced maps of our interest, namely those of the form $\iota_*$ and $\iota^*$, are $\coebs$-module (so also $\HH^*(\mathrm U)$-module) homomorphisms, thus inducing well defined maps at the level of $\nHH^*_\tT$.

In the simplest case $w_J=1\in\Br(r)$ we have $\htor^*(\BB_\tT(1))= \HH^*(B\tT\times\tT)=\Z[x_1,\dots,x_r]\otimes \Lambda (\hat x_1,\dots,\hat x_r)$ and $\beta_n=\sum_i x_i^n\hat x_i$. In consequence,
$$\nHH_\tT^*(\BB_\tT(1))=\frac{\Z[x_1,\dots,x_r]}{\left(\sum_i x_i^n\hat x_i\right)}\cdot\hat x_1\dots\hat x_r.$$

In terms of the KR-complex, these geometrically defined classes $\beta_n$ provide an alternative formulation of the $d_-$ differential. To see how this works,  let us consider a positive word $w_J$ and endow $\htor^*(\BB_\tT(w_J))$ with the structure of a filtered complex with respect to the Serre filtration (Section \ref{topmodelpositive}). Let $\widehat E_*^{*,*}(w_J)$ be the spectral sequence associated to this filtered complex $(\mathcal F^\bullet\htor^*(\BB_\tT(w_J)),\beta_n)$. Page zero is the associated graded $$\widehat E_0^{p,q}(w_J)=\assgr^p\left[\htor^{p+q}(\BB_\tT(w_J))\right]=\frac{\fil^p\htor^{p+q}(\BB_\tT(w_J))}{\fil^{p+1}\htor^{p+q}(\BB_\tT(w_J))},$$ which is also the $E_\infty$ page of the Serre spectral sequence. According to Proposition \ref{serressprop}, we have an isomorphism $\widehat E_0^{*,*}(w_J)\cong\hoch (B_J)$. We thus keep referring to the two gradings in this new spectral sequence $\widehat E_0^{*,*}$ as Soergel and Hochschild gradings, respectively. Denote the differential in $\widehat E_{s}^{*,*}(w_J)$ by $\widehat d_s$. With respect to our grading convention, $\widehat d_s$ has bidegree $(s,2n+1-s)$. We set out to prove that the only non-trivial differential in $\widehat E_*^{p,q}(w_J)$ is multiplication by a representative of $\beta_n$ in $\widehat E_{2n}^{2n,1}(w_J)=\widehat E_{0}^{2n,1}(w_J)\cong\hoch(B_J)$. 

\begin{lemma}
The class $\beta_n\in\coe$ lives in $\fil^{2n} \coe$. Thus, $\beta_n\in\fil^{2n}\htor^{*}(\BB_\tT(w_J))$.\end{lemma}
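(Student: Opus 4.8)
The plan is to pin down the Serre filtration of $\coe$ by comparing with the inclusion of the $\tT$-fixed points $\iota\colon\tT\hookrightarrow\Ur^c$, for which the image of $\beta_n$ is completely explicit. First I would recall that the Serre filtration on $\coe=\htor^*(\Ur)$ is the one attached to the fibration $\tT\to\Ur^c_{h\tT}\to(\Ur/\tT)_{h\tT}$, the $\Ur$-analogue of the fibration in Proposition \ref{serressprop}: here $E_2=\coebs\otimes\Lambda(\hat x_1,\dots,\hat x_r)$, the spectral sequence collapses at $E_3=E_\infty=\hoch(\coebs)$ which is free over $\Z$, and the base--fiber bigrading on $E_\infty$ is the Soergel--Hochschild bigrading. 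Thus, up to the (integrally non-canonical) splitting of the filtration, $\fil^{2n}\coe$ is spanned by the classes whose leading term in the associated graded $\gr_{Serre}\coe=\hoch(\coebs)$ has Soergel degree at least $2n$.

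Next I would observe that $\iota\colon\tT\hookrightarrow\Ur^c$ covers the inclusion of the $\tT$-fixed point $\{e\tT\}\hookrightarrow\Ur/\tT$, so it defines a morphism of fibrations from $\tT\to\tT_{h\tT}=B\tT\times\tT\to B\tT$ to the fibration above. Hence $\iota^*\colon\coe\to\htor^*(\tT)$ preserves Serre filtrations, and the induced map on $E_\infty$-pages is the bidegree-preserving map $\hoch(\coebs)\to\hoch(\htor^*)=\htor^*\otimes\Lambda(\hat x_1,\dots,\hat x_r)$ induced by the bimodule surjection $\mu\colon\coebs\to\htor^*$ that evaluates Schubert classes at $e\tT$ (equivalently, the multiplication map). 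Now $\iota^*\beta_n=\sum_i x_i^n\hat x_i$ sits purely in Soergel--Hochschild bidegree $(2n,1)$ of $\hoch(\htor^*)$, so once we know that $\hoch(\mu)$ is injective we are done: writing $\overline{\beta_n}\in\hoch(\coebs)$ for the leading term of $\beta_n$, its bidegree-$(p,q)$ components (with $p+q=2n+1$) are sent by $\hoch(\mu)$ to the corresponding components of $\sum_i x_i^n\hat x_i$, which vanish unless $(p,q)=(2n,1)$; injectivity then forces $\overline{\beta_n}$ to live in bidegree $(2n,1)$, that is, $\beta_n\in\fil^{2n}\coe$. The last assertion of the statement follows at once: the commutative square of Section \ref{topmodelpositive} realizes the multiplication map $\BB_\tT(w_J)\to\Ur^c$ as a morphism of principal $\tT$-bundles, hence of Serre fibrations, so the pullback $\coe\to\htor^*(\BB_\tT(w_J))$ is filtration-preserving and carries $\beta_n\in\fil^{2n}\coe$ to $\beta_n\in\fil^{2n}\htor^*(\BB_\tT(w_J))$.

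The step I expect to be the crux is the injectivity of $\hoch(\mu)$, equivalently the strictness of $\iota^*$ with respect to the two Serre filtrations (so that $\fil^{2n}\coe=\coe\cap\fil^{2n}\htor^*(\tT)$). This is where one must genuinely use the geometry, since injectivity of the ring map $\iota^*\colon\coe\to\htor^*(\tT)$ (Appendix) does not by itself pass to associated gradeds. I would close the gap by noting that over $\Q$ the identifications $\coe\cong\hoch(\coebs)$ and $\htor^*(\tT)\cong\hoch(\htor^*)$ can be taken canonically and compatibly with $\iota^*$, so that $\hoch(\mu)\otimes\Q=\iota^*\otimes\Q$ is injective, and then using that $\hoch(\coebs)$ is free over $\Z$ (collapse of the spectral sequence at $E_3$) to upgrade injectivity from $\Q$ to $\Z$; alternatively one can argue directly on the $E_3=E_\infty$ page from $d_2(\hat x_i)=x_i-y_i=S_{\sigma_{i-1}}-S_{\sigma_i}$ and the description of $\coebs$ as a free $\htor^*$-module on Schubert classes.
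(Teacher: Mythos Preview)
Your strategy hinges on the injectivity of $\hoch(\mu)$, and this is where the argument breaks down: that map is \emph{not} injective. In top Hochschild degree $r$ there are no boundaries, so $\hoch_r(\coebs)=\coebs\cdot\hat x_1\cdots\hat x_r\cong\coebs$, and on this piece $\hoch(\mu)$ is literally the multiplication map $\mu\colon\coebs\to\htor^*$, which kills every Schubert class $S_w$ with $w\neq e$. The proposed rational repair does not help either: even over $\Q$, a filtration-preserving injection need not induce an injection on associated gradeds, and a splitting of the Serre filtration on $\coe\otimes\Q$ that is ``compatible with $\iota^*$'' in the sense you need (making $\iota^*\otimes\Q$ block-diagonal rather than merely block-triangular) cannot exist, precisely because the diagonal blocks $\hoch(\mu)\otimes\Q$ are not injective. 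So the strictness of $\iota^*$ you are after does not follow from the ingredients you have assembled.

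The paper avoids associated gradeds altogether by exploiting a degree coincidence you did not use. Since $\coebs$ is concentrated in even degrees, $E_\infty^{2n+1,0}=0$, and hence $\fil^{2n+1}\coe^{2n+1}=0$; thus $E_\infty^{2n,1}=\fil^{2n}\coe^{2n+1}$ is an honest subgroup of $\coe^{2n+1}$ rather than a subquotient. The paper then writes down an explicit rational Hochschild cycle $\kappa'=\tfrac{1}{n+1}\sum_i\tfrac{x_i^{n+1}-y_i^{n+1}}{x_i-y_i}\,\hat x_i\in E_2^{2n,1}\otimes\Q$, which therefore names a genuine element of $\coe\otimes\Q$; one computes $\iota^*(\kappa')=\sum_i x_i^n\hat x_i=\iota^*(\beta_n)$ directly, and now the injectivity that is used is that of $\iota^*$ on $\coe\otimes\Q$ itself (from the Appendix), not on any associated graded. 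Freeness of $E_\infty$ over $\Z$ then upgrades $\beta_n\in\fil^{2n}\coe\otimes\Q$ to the integral statement. Your final step, pulling back along the multiplication map to get $\beta_n\in\fil^{2n}\htor^*(\BB_\tT(w_J))$, is fine and matches the paper.
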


\begin{proof}
Consider the Serre spectral sequence $E_*^{*,*}$  for the fibration 
$$\tT\to \Ur_{h\tT}\to (\Ur/\tT)_{h\tT}.$$
As in the case of the fibration with total space $\BB_{h\tT}(w_J)$, we have $E^{*,*}_2=\choch(\coebs)$ and $E^{*,*}_3=E^{*,*}_\infty=\hoch(\coebs)$. Since the cohomology of the base, $\coebs$, is entirely concentrated in even degree, we know that $E^{2n+1,0}_\infty$ vanishes and so $E^{2n,1}_\infty=\fil^{2n}\htor^{2n+1}(\Ur)$. The entire spectral sequence consists of free abelian groups, so every page injects into its rationalization. Note that the cycle 
$$\kappa'=\sum_{i=1}^r \frac{1}{n+1}\frac{x_i^{n+1}-y_i^{n+1}}{x_i-y_i}\hat x_i\in E_2^{2n,1}\otimes\Q.$$
represents $\beta_n$ because, at the infinite page, the map induced by fixed points inclusion $\iota: \tT\to \Ur$ sends $\kappa'\mapsto \sum_i x_i^n \hat x_i$. By injectivity we conclude that $\beta_n=\kappa'\in\fil^{2n}\htor^{2n+1}(\Ur)$. The second part of the proposition follows by pulling back along the multiplication map, which respects the relevant fibrations.
\end{proof}

Since the filtration is multiplicative, the following is a direct consequence of the lemma.

\begin{corollary}
The differentials $\widehat d_s$ in $\widehat E_*^{*,*}(w_J)$ are all zero for $s<2n$. At the page $\widehat E_{2n}^{*,*}(w_J)$ the differential is multiplication by $\beta_n\in \widehat E^{2n,1}_{2n}(w_J)$.
\end{corollary}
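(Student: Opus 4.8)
The plan is to deduce the Corollary directly from the Lemma together with the multiplicativity of the Serre filtration, with essentially no new computation. Recall that $\widehat E_*^{*,*}(w_J)$ is the spectral sequence of the filtered complex $(\mathcal F^\bullet\htor^*(\BB_\tT(w_J)),\beta_n)$, where the differential on the underlying (unfiltered) complex is multiplication by $\beta_n$, and $\widehat d_s$ has bidegree $(s,2n+1-s)$. The key input is that, by the Lemma, $\beta_n\in\fil^{2n}\htor^{2n+1}(\BB_\tT(w_J))$.

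First I would argue that $\widehat d_s=0$ for $s<2n$. The differential $\widehat d_s$ on $\widehat E_s^{p,q}(w_J)$ is induced, on the page level, by the full differential $\beta_n\wedge(-)$ of the filtered complex: if $x\in\fil^p\htor^{p+q}$ represents a class in $\widehat E_s^{p,q}$, then $\beta_n\cdot x\in\fil^{p}\cdot\fil^{2n}\subseteq\fil^{p+2n}$ by multiplicativity of the Serre filtration. Hence the image of $\beta_n\cdot x$ in the associated graded piece $\gr^{p+s}$ — which is what $\widehat d_s(x)$ records — vanishes whenever $s<2n$, since $\beta_n\cdot x$ already lies in filtration degree at least $p+2n>p+s$. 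Therefore every class is a cycle through page $2n-1$, no boundaries are created, and all pages $\widehat E_s^{*,*}(w_J)$ for $s\le 2n$ coincide with $\widehat E_0^{*,*}(w_J)\cong\hoch(B_J)$.

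Next, for the page $\widehat E_{2n}^{*,*}(w_J)$, the same computation with $s=2n$ shows that $\widehat d_{2n}(x)$ is exactly the class of $\beta_n\cdot x$ in $\gr^{p+2n}$. Since $\beta_n$ itself lies in $\fil^{2n}\htor^{2n+1}$ and $\widehat E_{2n}^{2n,1}(w_J)=\widehat E_0^{2n,1}(w_J)$, it determines a well-defined class $\bar\beta_n\in\widehat E_{2n}^{2n,1}(w_J)$, namely its image in $\gr^{2n}\htor^{2n+1}(\BB_\tT(w_J))$, which under the identification $\widehat E_0^{*,*}(w_J)\cong\hoch(B_J)$ of Proposition \ref{serressprop} is represented by the Hochschild cycle $\kappa'_J$. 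By the Leibniz rule for the associated graded of a multiplicative filtration, $\widehat d_{2n}(x)=\bar\beta_n\cdot x$ in $\widehat E_{2n}^{*,*}(w_J)=\hoch(B_J)$, i.e. $\widehat d_{2n}$ is multiplication by $\beta_n$.

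I do not expect a serious obstacle here; the content is entirely in the Lemma, which has already been established. The one point requiring a little care — and the only place a careless argument could go wrong — is the compatibility between "the differential of the filtered complex is $\beta_n\wedge(-)$" and "the page-$s$ differential $\widehat d_s$ is induced by it": one must check that on $\widehat E_{2n}$ the induced differential is genuinely \emph{multiplication} by the class $\bar\beta_n$, not merely that it raises filtration by $2n$. This follows because multiplication by the fixed element $\beta_n$ is a filtered chain map (of degree $(2n,1)$ with respect to the bigrading) on $(\mathcal F^\bullet\htor^*(\BB_\tT(w_J)),\beta_n)$, so it descends to each page compatibly, and on $\widehat E_{2n}$ it agrees with the spectral sequence differential by the standard identification of $\widehat d_{2n}$ with the top-filtration part of the total differential. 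Hence the Corollary is immediate.
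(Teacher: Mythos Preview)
Your proposal is correct and follows exactly the paper's approach: the paper's entire proof is the single sentence ``Since the filtration is multiplicative, the following is a direct consequence of the lemma,'' and you have simply unpacked that sentence. The extra care you take in identifying $\widehat d_{2n}$ with multiplication by the class $\bar\beta_n$ is more detail than the paper gives, but it is the same argument.
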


A priori, the expression defining $\kappa'\in \choch(\coebs)\otimes\Q$  is rational as it involves the denominators in the potential $p(X)=\frac{1}{n+1}X^{n+1}$. In Hochschild homology, however, the class $\kappa'\in \hoch(\coebs)\otimes \Q$ is integral, being the same class of $\beta_n$. We provide explicit integral representatives $\bet_n\in\choch(\coebs)$ for $n\leq 2$:
\begin{align*}
\bet_0&=\sum_{k=1}^r \hat x_k,\\
\bet_1&=\sum_{k=1}^r (x_k-S_{k-1})\hat x_k,\\
\bet_2&=\sum_{k=1}^r (x^2_k+x_{k}(S_{k}-S_{k-1})-S_kS_{k-1})\hat x_k.\\
\end{align*}
The fact that these are Hochschild cycles is a matter of Schubert calculus. We refer the reader to Proposition \ref{schubertappendix} in the Appendix for the relevant formulas in $\coebs$ from which the cycle condition for $\bet_0$, $\bet_1$ and $\bet_2$ follow directly. It seems plausible, if laborious, to obtain explicit or recursive formulas for higher $n$.

Now we turn our attention to link homology. Let $D$ be the link diagram given by closure of a braid word $w_I$ and let 
$$\KhRn(D)=\bigoplus_{J\in\II}\choch(B_J)$$ be, at each vertex, the Hochschild complex of the associated Soergel bimodule $B_J$. We think of $\KhRn(D)$ as a triple complex with the usual $d_+$ and $d_v$, while $d_-$ is given by multiplication by (the pullback of) $\bet_n$ at each vertex. This should be contrasted with the complex $KhR_p^\Z(D)$ in the previous chapter. Both are the same as triply graded $\htor^*$-algebras. Nonetheless, if $p(X)=X^{n+1}\in\Z[X]$, we have $$\left(H_+(KhR_p^\Z(D)),d_-,d_+\right)=\left(H_+(\KhRn(D)),(n+1)\cdot d_-,d_+\right),$$ suggesting that this definition is correcting the torsion that appears in the formulation with potentials.

\begin{definition}
Given a braid diagram $D$ for a link $L$, we define the \emph{normalized integral Khovanov-Rozansky $\sln$ link homology} as the iterated homology
$$\KRHn(L):=H_vH_+H_-(\KhRn(D)).$$
\end{definition}

We now turn our attention into the verification of certain properties that this normalized integral theory has in common with the rational theory $KRH_n$. According to our discussion in the previous chapter, some of these properties do not hold for the un-normalized counterpart $KRH^\Z_n$.

\begin{theorem}
There is an isomorphism
$$\KRHn(L)=H_vH_-H_+(\KhRn(D))\cong H_{v-}H_+(\KhRn(D)).$$
Therefore $\KRHn(L)$ is (up to overall shifts) an integral invariant of $L$ with value at the unknot
$$\KRHn(\unlink)=\frac{\Z[x]}{(x^n)},$$
which is free over $\Z$. The link homology $\KRHn$ coincides with the usual Khovanov-Rozansky $\sln$ link homology $KRH_n$ after rationalization.
\end{theorem}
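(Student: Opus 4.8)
The plan is to mirror, over $\Z$, Rasmussen's argument for the collapse of the spectral sequence $\prescript{I}{}E_*^{*,*}$, but now using the geometrically defined class $\bet_n$ in place of the potential-derived class $\kappa'_J$. The crucial structural fact, established in the previous section, is that $H_+(\KhRn(D))$ has terms $\hoch(B_J)$, each of which is free over $\Z$; this is the input that was unavailable for $KhR^\Z_p(D)$ and is precisely what breaks in the un-normalized setting. First I would form the double complex $(H_+(\KhRn(D)),d_-,d_v) = (\bigoplus_J \hoch(B_J)\{n_J\},d_-,d_v)$ with $d_-$ given by multiplication by (the pullback of) $\bet_n$, and consider the spectral sequence obtained by filtering horizontally (by Hochschild degree $\gr_-$), with $\prescript{I}{}E_1^{*,*}(D) = H_-H_+(\KhRn(D))$ converging to $H_{v-}H_+(\KhRn(D))$. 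The key claim is that $H_-(\hoch(B_J))$ is concentrated in top Hochschild degree $r$ for every $J$, so that $\prescript{I}{}E_1^{*,*}$ lives in a single row $\prescript{I}{}E_1^{r,*}$; since $d_s$ has bidegree $(s,1-s)$, the only possibly nonzero differential is $d_1 = d_v$, giving $\prescript{I}{}E_2^{*,*} = H_vH_-H_+(\KhRn(D)) \cong H_{v-}H_+(\KhRn(D))$ with no extension problems because everything is free over $\Z$. Running the vertical filtration instead yields the second spectral sequence $\prescript{II}{}E_1^{*,*} = H_vH_+(\KhRn(D))$, identifies the $E_\infty$-page with $H_{v-}H_+(\KhRn(D))$, and shows it agrees with $H_vH_-H_+(\KhRn(D)) = \KRHn(L)$; link invariance then follows because $\prescript{II}{}E_1^{*,*}$ is integral HOMFLY-PT homology (a link invariant after shifts) and hence so are all later pages and the limit.

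The main obstacle is the claim that $H_-(\hoch(B_J))$ is concentrated in top Hochschild degree, i.e. that multiplication by $\bet_n$ on $\hoch(B_J)$ is a resolution-of-the-identity type acyclicity statement away from degree $r$. I would prove this by working one strand at a time: recall $\hoch(B_J) = \htor^*(\BB_\tT(w_J))$ is, as a module over the exterior algebra $\HH^*(\mathrm U)$, built from the $\hat x_i$, and the class $\bet_n$ pulls back from $\sum_i x_i^n \hat x_i \in \htor^*(\tT)$. Using the $\coebs$-algebra structure (which $d_v$ respects) one reduces to the universal case $\coebs$, and there one can filter by the Serre filtration: on the associated graded $\bet_n$ acts as $\sum_i x_i^n \hat x_i$ on $B_J \otimes \Lambda(\hat x_1,\dots,\hat x_r)$, and multiplication by such an odd class on a free exterior algebra over an integral domain (more precisely, over the ring $B_J$, which is free over $\Z[x_1,\dots,x_r]$ and a domain in the relevant quotients) has homology concentrated in top exterior degree exactly when the $x_i^n$ generate a regular-ish ideal. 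For the unknot $w_J = 1 \in \Br(1)$ this is transparent: $\nHH_\tT^*(\BB_\tT(1)) = \Z[x]/(x^n) \cdot \hat x$, giving the stated value $\KRHn(\unlink) = \Z[x]/(x^n)$, free over $\Z$. The general case requires checking that the relevant Koszul-type complex on $\hoch(B_J)$ built from $\bet_n$ is acyclic outside top degree, which I expect follows from the integrality of $\bet_n$ (the content of the preceding lemma and the explicit formulas $\bet_0,\bet_1,\bet_2$) together with the freeness statement in Proposition \ref{serressprop}; the subtlety is purely integral, since rationally this is exactly Rasmussen's theorem.

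Finally, the comparison with $KRH_n$ after rationalization is immediate: over $\Q$ the class $\bet_n$ and the potential class $\kappa'$ both represent $\beta_n$ (the potential $p(X) = \tfrac{1}{n+1}X^{n+1}$ differs from $X^{n+1}$ only by the scalar $n+1$, which is invertible over $\Q$), so $(H_+(\KhRn(D))\otimes\Q, d_-, d_v) \cong (H_+(KhR_p(D)), d_-, d_v)$ and hence $\KRHn(L)\otimes\Q \cong KRH_n(L)$ by Definition \ref{deflinkhomologies}. The scaling remark $\bigl(H_+(KhR^\Z_p(D)),d_-,d_v\bigr) = \bigl(H_+(\KhRn(D)),(n+1)d_-,d_v\bigr)$ for $p(X)=X^{n+1}$, already recorded before the definition of $\KRHn$, then explains how the un-normalized torsion is corrected, and in particular why the value at the unknot loses the factor $n+1$.
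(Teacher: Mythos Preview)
Your overall architecture is exactly the paper's: set up the two double-complex spectral sequences on $(H_+(\KhRn(D)),d_-,d_v)$, use $\prescript{II}{}E_1=$ integral HOMFLY-PT to get link invariance of $H_{v-}H_+$ and the comparison with $KRH_n$ after rationalization, and deduce the isomorphism $H_vH_-H_+\cong H_{v-}H_+$ from the collapse of $\prescript{I}{}E$ at page~2, which in turn follows once one knows that $H(\hoch(B_J),\beta_n)$ is concentrated in top Hochschild degree~$r$. Two minor corrections: the absence of extension problems is a consequence of the single-row collapse, not of freeness over~$\Z$; and $\hoch(B_J)$ is only the associated graded of $\htor^*(\BB_\tT(w_J))$, not equal to it.

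The genuine gap is your argument for the key claim itself. Your sketch (``Koszul-type acyclicity'', ``regular-ish ideal'', $B_J$ ``a domain in the relevant quotients'') does not go through as stated: the Soergel bimodule $B_J$ is not a domain (it satisfies $\delta_t(\delta_t+[\alpha_{i_t}]_t)=0$), and the class $\bet_n$ acting on $\hoch(B_J)$ is not a Koszul differential for a regular sequence in any obvious sense, so the one-strand-at-a-time reduction you describe does not directly yield concentration in top degree. The paper instead proves this as Lemma~\ref{tophochdeg} by induction on the \emph{complexity} $N(J)=r+\sum_j i_j$ of the braid word, splitting into four cases: (i) no occurrence of $r-1$, (ii) exactly one occurrence of $r-1$, (iii) a repeated consecutive index, (iv) a pattern $s,s-1,s$. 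Cases (i)--(ii) use the product decompositions $\BB_\tT(w_J)\cong \BB_{\tT^{(r-1)}}(w_I^{(r-1)})\times\Delta$ or $\times\,\mathrm{SU}(2)$ together with primitivity of $\beta_n$; cases (iii)--(iv) use explicit $\coebs$-module splittings of $B_J$ into lower-complexity pieces. This inductive decomposition is the missing idea in your proposal.
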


\begin{proof}

As usual, the double complex comes with two spectral sequences
\begin{align*}\prescript{I}{}E_1^{*,*}=H_-H_+(\KhRn(D))\Longrightarrow H_{v-}H_+(\KhRn(D)),\\
\prescript{II}{}E_1^{*,*}=H_vH_+(\KhRn(D))\Longrightarrow H_{v-}H_+(\KhRn(D)),
\end{align*}
graded by Hochschild-cubical bigrading. On the one hand, the Rasmussen spectral sequence $\prescript{II}{}E_*^{*,*}$ ensures that the total homology $H_{v-}H_+(\KhRn(D))$ is a link invariant from the fact that $\prescript{II}{}E_1^{*,*}$ is already integral HOMFLY-PT homology. Moreover, after rationalization this coincides with the rational Rasmussen spectral sequence, identifying $H_{v-}H_+(\KhRn(D))\otimes\Q$ with the usual Khovanov-Rozansky $\sln$ link homology.

 The isomorphism $H_{v-}H_+(\KhRn(D))\cong H_vH_-H_+(\KhRn(D))$ is a consequence of the fact that the other spectral sequence $\prescript{I}{}E_*^{*,*}$ collapses at the second page into a single vertical $\prescript{I}{}E_2^{r,*}$, just like in the rational case. This follows from Lemma \ref{tophochdeg} by noticing that $\prescript{I}{}E_1^{*,*}=\bigoplus_J H(\hoch(B_J),\beta_n)$ already lives entirely in Hochschild degree $r$. That is, the first page is already concentrated in a single horizontal line and so $d_1$ is the last possibly non-trivial differential.
\end{proof}

\subsection{Properties of $\nHH^*_\tT$}
 The next lemma, which is the technical core of this section, concludes the computation of the spectral sequence $\widehat E_{*}^{*,*}(w_J)$.  We have collapse into a single horizontal line at the page $2n+1$, giving us a canonical identification of the homology $\nHH_\tT^*(w_J)$ in terms of Hochschild homology of Soergel bimodules.

\begin{lemma}\label{tophochdeg}
\begin{enumerate}[label=(\alph*)]
\item Hochschild homology $\hoch(B_J)$ is a free $\htor^*$-module.
\item The homology $H(\hoch(B_J),\beta_n)$ is concentrated in top Hochschild degree $r$. 

\item The homology $H(\hoch(B_J),\beta_n)$ is free over $\Z$ and the $\Z[x_1,\dots,x_r]$-module structure factors through  $\Z[x_1,\dots,x_r]/(x_1^n,\dots,x_r^n)$.
\item The spectral sequence $\widehat E_{*}^{*,*}(w_J)$ collapses into a single line at the page $2n+1$, with $$\widehat E_{\infty}^{*,*}(w_J)=\widehat E_{2n+1}^{*,*}(w_J)=H(\hoch(B_J),\beta_n),$$ which is, in turn, canonically isomorphic to $\nHH_\tT^*(w_J)(w_J)$.
\end{enumerate}
\end{lemma}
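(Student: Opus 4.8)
The four parts should be proved roughly in the stated order, since later parts feed on earlier ones, but the crux is part (b). For part (a), the strategy is to recall that $B_J$ is obtained from $\coebs = \htor^*\otimes_{(\htor^*)^\Sigma}\htor^*$ by iterated tensoring over subrings $(\htor^*)^{i_t}$, each a free extension of rank $2$; dually, $\BTS_\tT(w_J)$ is an iterated $\mathbb{P}^1$-bundle, so $\htor^*(\BTS_\tT(w_J)) = B_J$ is free as a left $\htor^*$-module with an explicit monomial basis in the $\delta_t$ coming from (\ref{deltaform}). Then $\hoch(B_J)$ is computed by the Koszul complex $\choch(B_J) = B_J\otimes\Lambda(\hat x_1,\dots,\hat x_r)$; by Proposition \ref{serressprop} the Serre spectral sequence collapses at $E_3$ and $\hoch(B_J)$ is free over $\Z$, and I would upgrade this to freeness over $\htor^*$ by observing that the associated graded $E_\infty = E_3$ is $\htor^*$-free (it is a sub quotient built from $B_J$ and the exterior algebra, with the $d_2 = d_\mathsf{H}$ differential $\htor^*$-linear since $x_i - y_i \in \ker$ acts the same on both sides modulo...), hence the filtration on $\hoch(B_J)$ splits $\htor^*$-linearly. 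Concretely one can exhibit a basis: the Schubert-type monomials in $\delta_t$ times a subset of the $\hat x_i$.

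For part (b), the idea is to reduce to the case $w_J = 1$ already computed in the text, namely $\nHH^*_\tT(\BB_\tT(1)) = \tfrac{\Z[x_1,\dots,x_r]}{(\sum_i x_i^n\hat x_i)}\cdot\hat x_1\cdots\hat x_r$, which visibly lives in top Hochschild degree. For general $w_J$ I would use the $\coebs$-algebra (equivalently $\hoch(\coebs)$-module) structure: since $\beta_n$ is the image of a class in $\hoch(\coebs)$ and $\hoch(B_J)$ is a module over $\hoch(\coebs)$, the differential $\beta_n$ is $\hoch(\coebs)$-linear. The cleanest route is to use the Koszul model directly: $\choch(B_J) = B_J\otimes\Lambda(\hat x_1,\dots,\hat x_r)$ carries two anticommuting differentials, $d_\mathsf{H}$ (with $d_\mathsf{H}\hat x_i = x_i - y_i$) and $d_-$ (multiplication by $\kappa'_J = \sum_j \frac{p(x_j)-p(y_j)}{x_j-y_j}\hat x_j$ for $p = \tfrac{1}{n+1}X^{n+1}$, which after the integrality discussion is $\bet_n$); and $\hoch(B_J) = H_\mathsf{H}$, so $H(\hoch(B_J),\beta_n) = H_{-}H_{\mathsf{H}}(\choch(B_J))$. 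Running the other order, $H_\mathsf{H}H_{-}$, one first takes homology of $B_J\otimes\Lambda(\hat x)$ with respect to multiplication by $\kappa'_J$; since $B_J$ is $\htor^*$-free (part (a)) and $\kappa'_J$ is, up to a change of exterior basis, $\sum_j \big(\tfrac{1}{n+1}\sum_{a+b=n}x_j^a y_j^b\big)\hat x_j$, I would argue this Koszul-type homology on $\Lambda(\hat x_1,\dots,\hat x_r)$ with $B_J$-coefficients is concentrated in top degree $r$ — this is where one needs that the sequence of coefficients is "regular enough" on $B_J$, using the free bimodule structure over $\coebs$ and that modulo the $y_j$ the relevant element becomes $x_j^n$, a regular sequence on $\htor^*/(\text{left ideal})$. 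Comparing the two spectral sequences of the double complex $(\choch(B_J), d_\mathsf{H}, d_-)$ then forces $H_{-}H_{\mathsf{H}}$ to also be concentrated in Hochschild degree $r$. \textbf{This regularity/concentration argument is the main obstacle}; the delicate point is that $\kappa'_J$ is not literally a regular sequence (it is a single element with mixed $x,y$ coefficients), so one must set up the right filtration — e.g. filter $B_J$ by the $y$-degree, take the associated graded where the coefficient of $\hat x_j$ becomes $x_j^n$, and check regularity there, then lift.

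For part (c), freeness over $\Z$ follows from part (d)'s identification with the $E_\infty$-page of $\widehat E^{*,*}_*(w_J)$, which is a subquotient of $\htor^*(\BB_\tT(w_J))$, itself $\Z$-free by Proposition \ref{serressprop}; alternatively, directly from the computation in (b) on the associated graded, noting that the quotient of a free $\Z$-module by a monomial-type ideal generated by a non-zero-divisor is again $\Z$-free. The claim that the $\Z[x_1,\dots,x_r]$-action factors through $\Z[x_1,\dots,x_r]/(x_1^n,\dots,x_r^n)$ I would get by comparing with the $w_J = 1$ case under the (iterated positive edge) surjections, or more directly: on the top-degree part, modulo the ideal generated by $\beta_n$ and lower-filtration terms, one finds $x_j^n\cdot(\hat x_1\cdots\hat x_r) = \pm x_j\cdot\kappa'_j\cdot(\hat x_1\cdots\widehat{\hat x_j}\cdots\hat x_r)\cdot(n+1)/(n+1) \equiv 0$ up to the $d_-$-boundary (this is exactly the unknot computation localized at strand $j$), so each $x_j^n$ kills $H(\hoch(B_J),\beta_n)$.

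Finally, part (d) is essentially bookkeeping once (b) and (c) are in hand: by the Corollary, $\widehat d_s = 0$ for $s < 2n$ and $\widehat d_{2n} = $ multiplication by $\beta_n$ on $\widehat E_{2n}^{*,*}(w_J) \cong \hoch(B_J)$, so $\widehat E_{2n+1}^{*,*}(w_J) = H(\hoch(B_J),\beta_n)$; by (b) this is concentrated in Hochschild (fiber) degree $r$, i.e. in a single horizontal line. Since $\widehat d_s$ has bidegree $(s, 2n+1-s)$, any $\widehat d_s$ with $s \geq 2n+1$ would have to leave or land outside that line (its target has fiber degree $r - (2n+1-s) \neq r$ unless $s = 2n+1$, but then it shifts filtration out of range), so all higher differentials vanish and $\widehat E_{2n+1}^{*,*}(w_J) = \widehat E_\infty^{*,*}(w_J)$. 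The canonical isomorphism $\widehat E_\infty^{*,*}(w_J) \cong \nHH^*_\tT(w_J)$ is the general fact that the $E_\infty$-page of the spectral sequence of the filtered complex $(\fil^\bullet\htor^*(\BB_\tT(w_J)), \beta_n)$ is the associated graded of its homology $\nHH^*_\tT(w_J)$, together with the collapse just established making the associated graded a single line, hence equal to the homology itself; one should note the Serre filtration on $\htor^*(\BB_\tT(w_J))$ is bounded, so there are no $\lim^1$ or convergence subtleties.
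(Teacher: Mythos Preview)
Your argument for part (d) is correct and your sketch for (a) is reasonable. The real problem is part (b), where there are two genuine gaps.

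First, the claim that $H_-(\choch(B_J))$ is concentrated in top Koszul degree --- equivalently, that the coefficient sequence $(c_1,\dots,c_r)$ of a representative $\bet_n=\sum_j c_j\hat x_j$ is regular on $B_J$ --- is false. Take $r=2$, $n=1$, $w_J=\sigma_1$; after eliminating $y_2=x_1+x_2-y_1$ one has $B_J\cong\Z[x_1,x_2,y_1]/\big((y_1-x_1)(y_1-x_2)\big)$, and the paper's integral representative gives $(c_1,c_2)=(x_1,y_2)$. The element $(y_1-x_1)\hat x_1+(y_1-x_1)\hat x_2$ is then a $d_-$-cycle (its image is $(y_1-x_1)(x_1-y_2)\,\hat x_1\hat x_2=(y_1-x_1)(y_1-x_2)\,\hat x_1\hat x_2=0$) but not a $d_-$-boundary (no $c\in B_J$ satisfies $cx_1=y_1-x_1$, by a degree argument after setting $x_1=0$). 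Changing the representative by a $d_\mathsf{H}$-boundary does not help, and the proposed filtration ``by $y$-degree'' is not well-defined on $B_J$ because the relations entangle the $x$- and $y$-variables.

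Second, even if $H_-(\choch(B_J))$ were concentrated in top degree, your inference that $H_-H_\mathsf{H}$ is also concentrated there does not follow. The differentials $d_\mathsf{H}$ and $d_-$ move in \emph{opposite} directions in Hochschild degree, so $(\choch(B_J),d_\mathsf{H},d_-)$ is not a bicomplex for which the standard pair of spectral sequences is available; and in any event, knowing that one spectral sequence has $E_\infty$ concentrated in a single row never forces the $E_2$ page of the companion spectral sequence to be concentrated there.

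The paper's route to (b) and (c) is entirely different: an induction on the complexity $N(J)=r+\sum_t i_t$, with four cases depending on how the top index $r-1$ occurs and whether braid-type simplifications apply. Each case uses either a product decomposition of $\BB_\tT(w_J)$ (when $r-1$ appears at most once) or a direct-sum splitting of $B_J$ as an $\coebs$-module (for repeated or Reidemeister-III type patterns), together with primitivity of $\beta_n$, to reduce to lower complexity. The induction simultaneously produces $\Z$-freeness in (c); your alternative argument for $\Z$-freeness via ``subquotient of a $\Z$-free module'' is incorrect, since for instance $\Z/2$ is a subquotient of $\Z$.
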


\begin{proof}
Let $J=(i_i,\dots, i_l)$. We prove (a), (b) and (c) by induction on $N(J)=r+\sum_j i_j$, quantity know as the \emph{complexity} of $w_J\in\mathrm{Br}(r)$. For $N=1$ we necessarily have $r=1$ and $J=\phi$, so that $\hoch(B_\phi)=\Z[ x ]\otimes\Lambda(\hat x)$ and $\beta_n=x^n\hat x$. The statements are clearly true in this case. For $N>1$ there exist four possibilities: 
\begin{enumerate}[label=\emph{(\roman*)}]
\item the index $r-1$ does not appear in $J$, 
\item the index $r-1$ appears exactly once in $J$, 
\item there are two repeated consecutive indices $i_j=i_{j+1}$ in $J$ or 
\item there are three consecutive indices satisfying $i_{j}=i_{j+1}+1=i_{j+2}$ in $J$.
\end{enumerate}
 We do induction by reducing to sequences of lower complexity.

We shall recall a few useful facts from \cite{nitubroken1} and \cite{nitubroken2}. First of all, we can always permute indices in $J$ cyclically without changing $\BB_{\tT}(w_J)$ up to $\tT$-equivariant homeomorphism. For this reason, in all cases we assume that the situation at hand is happening in the last indices of the sequence $J$. Next, for case (i) and (ii) we use the following $\tT$-equivariant isomorphisms:
\begin{align}
\label{fibr1}\BB_{\tT}(w_J)&\cong \BB_{\tT^{(r-1)}}\left(w^{(r-1)}_{I}\right)\times \mathrm{SU}(2),\\
\label{fibr2}\BB_{\tT}(w_{I})&\cong \BB_{\tT^{(r-1)}}\left(w^{(r-1)}_{I}\right)\times \Delta.
\end{align}
The notation here is the following: in $J=(i_1,\dots,i_l)$ the last index $i_l=r-1$ is the only index with that value, so that $I=(i_1,\dots,i_{l-1})$ has no occurrence of $r-1$. The word $w^{(r-1)}_{I}$ lives in a braid group with one less strand $\mathrm{Br}(r-1)$. The circle $\Delta$ is the maximal torus in $\mathrm{SU}(2)$, both thought of as groups of $r\times r$ matrices differing from the identity only in the last 2 by 2 block. The subtorus $\tT^{(r-1)}\leq \tT$ consists of diagonal matrices in which the first $r-1$ entries multiply to 1.  The torus $\tT$ acts by conjugation on all terms, with the last entry acting trivially on $\BB_{\tT^{(r-1)}}\left(w^{(r-1)}_{I}\right)$ and the first $r-2$ entries acting trivially on $\mathrm{SU}(2)$ and $\Delta$.

\emph{Case (i)}. We deal with a sequence $I$ where the index $r-1$ does not appear, so we might consider the word $w_I^{(r-1)}$ which is the same $w_I$ thought of as living in $\Br(r-1)$, thus of lower complexity. The identity (\ref{fibr2}) induces a pullback diagram
$$\begin{tikzcd}\BB_{h\tT}(w_I) \ar[r] \ar[d] &\Delta_{h\tT}\ar[d] \\ \BB_{\tT^{(r-1)}}\left(w^{(r-1)}_{I}\right)_{h\tT}\ar[r] & B\tT.
\end{tikzcd}
$$
whence one gets a canonical induced map
\begin{equation}\label{decomp}\htor^*\left(\BB_{\tT^{(r-1)}}\left(w^{(r-1)}_{I}\right)\right)\otimes_{\htor^*} \htor^*(\Delta) \to\htor^*(\BB_\tT(w_I)).\end{equation}
that turns out to be an isomorphism by collapse of the Eilenberg-Moore spectral sequence. The map $\BB_{h\tT}(w_J)\to\mathrm U$ factors as $$\BB_{h\tT}(w_J)\to\BB_{\tT^{(r-1)}}\left(w^{(r-1)}_J\right)_{h\tT}\times \Delta_{h\tT}\to \mathrm U\times\mathrm U\to\mathrm U,$$
where the last map is multiplication. In cohomology, we use this factorization and the fact that $\beta_n\in\HH^*(\mathrm U)$ is primitive to guarantee that the isomorphism (\ref{decomp}) indeed corresponds $\beta_n\otimes 1+1\otimes \beta_n$ to $\beta_n$, being thus enhanced into an isomorphism of chain complexes. At the level of associated graded, and after cancelling out a spare variable, we have an isomorphism 
\begin{equation}\label{decompHH}\hoch\left(B^{(r-1)}_{I}\right)\otimes_{\HH_{\tT^{(r-1)}}^*} (\htor^*\otimes\Lambda(\hat x))\to\hoch(B_I).\end{equation}
Consequently, one could think of $\hoch(B_I)$ as a double complex with horizontal (resp. vertical) differential $\beta_n\otimes 1$ (resp. $1\otimes\beta_n$) and horizontal grading by Hochschild degree in $\hoch(B^{(r-1)}_{I})$ (resp. Koszul degree in $\htor^*\otimes\Lambda(\hat x)$).  As such, it comes with a double complex spectral sequence $E^{*,*}_*$ associated to the horizontal filtration. This should not be confused with the spectral sequence $\widehat E_{*}^{*,*}$ that we ultimately want to compute. The factor $\htor^*\otimes\Lambda(\hat x)$ is free over $\htor^*$ and the $d_-$-differential there is given by $\beta_n=(x_{r-1}^n-x_r^n)\hat x$. The homology $H(\htor^*\otimes\Lambda(\hat x),\beta_n)=\frac{\htor^*}{(x_{r-1}^n-x_r^n)}\hat x$ is still free over $\HH_{\tT^{(r-1)}}^*$. Such freeness suffices to guarantee that the second page $E^{*,*}_2$ (i.e. the iterated homology) has the form
\begin{equation}\label{decompHn}E^{*,*}_2=H\left(\hoch(B^{(r-1)}_I),\beta_n\right)\otimes_{\HH_{\tT^{(r-1)}}^*}\frac{\htor^*}{(x_{r-1}^n-x_r^n)}\hat x.\end{equation}
Note that $H(\hoch(B^{(r-1)}_I),\beta_n)$ lives in Hochschild degree $r-1$ by inductive hypothesis on the lower complexity word $w^{(r-1)}_I$, while $\hat x$ is in degree 1. We see now that this spectral sequence has already collapsed into a single spot $E^{r-1,1}_2$ at the second page. This term is therefore isomorphic to $H(\hoch(B_I),\beta_n)$ and the total Hochschild degree is $r$ as desired. Regarding property (b), notice that the isomorphism (\ref{decompHH}) already presents $\hoch(B_I)$ as a free $\htor^*$-algebra. 

For property (c) note from (\ref{decompHn}) that, as $\Z[x_1,\dots,x_r]$-modules,
$$H(\hoch(B_I),\beta_n)\cong\frac{M[x_r]}{(x_{r-1}^n-x_r^n)M[x_r]},$$
where $M=H\left(\hoch(B^{(r-1)}_I),\beta_n\right)$ has trivial action of $x_r$. By inductive hypothesis we know $x_{r-1}^nM=0$. Thus $$H(\hoch(B_I),\beta_n)\cong\frac{M[x_r]}{x_r^nM[x_r]}.$$ In view of this, the properties in (c) follow for $H(\hoch(B_I),\beta_n)$.

\emph{Case (ii)}. We start with $J$ in which the index $r-1$ appears only once and in the last position. We want to look at $\BB(w_J)$ in terms of the sequence $I$ in which the last index $i_l=r-1$ is removed. Completely analogous arguments to part (i) give us an isomorphism $\htor^*(\BB(w_J))= \htor^*(\BB(w^{(r-1)}_{I}))\otimes_{\htor^*}\htor^*(\mathrm{SU}(2))$ and then at the level of Hochschild homology $\hoch(B^{(r-1)}_{I})\otimes_{\HH_{\tT^{(r-1)}}^*} (\htor^*\otimes\Lambda(\oldgamma))=\hoch(B_I)$. In this case  $(\htor^*\otimes\Lambda(\oldgamma))$ has $d_-$ differential $\beta_n=\frac{x^n_{r-1}-x_r^n}{x_{r-1}-x_r}\oldgamma$. The conclusions follow similarly.

\emph{Case (iii)}. Take $J=(i_1,\dots,i_l)$ with $i_{l-1}=i_{l}=s$, and let $J'$ be the subsequence in which $i_{l}$ omitted. The multiplication map $G_s\times_{\tT}G_s \to G_s$ induces a map at the level of Soergel bimodules (and in fact, at the level of $\coebs$-algebras) $B_{J'}\to B_J$. This map can be expressed in terms of the formula (\ref{deltaform}), where generators are respectively $\delta'_k\in B_{J'}$ and $\delta_k\in B_{J}$, by
$$\delta'_{k} \mapsto \left\lbrace \begin{matrix}
\delta_k & \mbox{for }k<l-1,\\
\delta_{l-1}+\delta_{l} & \mbox{for }k=l-1. 
\end{matrix}\right. $$
One verifies that $B_J$ is a free $B_{J'}$-module with basis $\lbrace 1,\delta_{l}\rbrace$. This means that
$$B_J\cong B_{J'}\oplus B_{J'}\{2\}$$
as modules over $\coebs$, where curly brackets represent a shift in cohomological degree. Taking Hochschild homology we have an isomorphism
$$\hoch(B_J)\cong\hoch(B_{J'})\oplus \hoch(B_{J'})\lbrace 2\rbrace$$
as $\hoch(\coebs )$-modules. Notice that there is no shift in Hochschild degree. The induction hypotheses applies to each of the summands.

\emph{Case (iv)}. Take $J=(i_1,\dots,i_l)$ with $i_{l-2}=i_{l}=s$ and $i_{l-1}=s-1$. We have a multiplication map $G_{s}\times_{\tT}G_{s-1}\times_{\tT}G_{s}\longrightarrow\tT^{s-2}\times  \mathrm U (3)\times \tT^{r-s-1}$ which induces the inclusion of the subalgebra $A\subseteq B_J$ generated by $A=\langle\delta_1,\dots,\delta_{l-3},\delta_{l-2}+\delta_{l},\delta_{l-1}\rangle$. In order to obtain a useful decomposition, we want a complement of $A$ inside $B_J$. Consider the class 
$$\lambda:= \delta_{l-1} + (x_{s-1}-x_{s})+ 2 \textstyle\sum_{k<l-1,\, i_k=s-1} \delta_k -\textstyle\sum_{k<l-1,\, |s-1-i_k|=1} \delta_k $$
Note that the map $A\to\lambda A$ which multiplies by $\lambda$ has kernel $\delta_{l-1} A$, so $\lambda A\cong A/(\delta_{l-1})\{2\}$ as an $A$-module. But we have an alternate description of this: if $J'$ is the subsequence of $J$ with the indices $i_{l-1},i_{l}$ omitted, then the map $A\to B_{J'}$ induced from inclusion $G_s\to \mathrm{U}(3)$ is surjective and also has kernel $\delta_{l-1} A$. This way we have an isomorphism $\lambda A\cong B_{J'}\{2\}$ as $A$-modules, and therefore a decomposition
\begin{equation}\label{splittingU3}B_J\cong A\oplus B_{J'}\{2\}.\end{equation}
This is also a decomposition of $\coebs$-modules by further composing with the map $\coebs\to A$ induced by multiplication. One can follow the same argument switching the roles of $s$ and $s-1$, letting $\widetilde J$ be $J$ except that $i_{l-2}=i_{l}=s-1$ and $i_{l-1}=s$, while $\widetilde J '$ is $\widetilde J$ with the indices $i_{l-1}$ and $i_{l}$ removed. In this case there is an isomorphism of $\coebs$-modules
$$B_{\widetilde{J}}\cong A\oplus B_{\widetilde J'}\{2\},$$
and so
$$B_J\oplus B_{\widetilde J'}\{2\} \cong B_{\widetilde J}\oplus B_{J'}\{2\}.$$
We then run Hochschild homology and $\beta_n$-homology and the result follows by observing that $J', \widetilde J$ and $\widetilde J '$ all have lower complexity than $J$.
This concludes the induction, so $\widehat{E}_1^{*,*}(w_J)$ lives in top Hochschild degree. Properties (b) and (c) also follow.

Regarding the spectral sequence $\widehat{E}_*^{*,*}(w_J)$, we have shown that the page $2n+1$ collapses into the single horizontal line $\widehat E^{*,r}_{2n+1}(w_J)$. There can be no further non-zero differentials, and so we have the canonical identifications
$$H(\hoch(B_J),\beta_n)=\widehat E^{*,*}_{2n+1}(w_J)=\widehat E^{*,*}_{\infty}(w_J)= H(\htor^*(\BB_\tT(w_J)),\beta_n)=\nHH_\tT^*(w_J).$$\end{proof}

\begin{remark}Observe that the $\Z[x_1,\dots,x_r]/(x_1^n,\dots,x_r^n)$ is, up to shifts, the homology of the unlink in $r$-components. The $\Z[x_1,\dots,x_r]/(x_1^n,\dots,x_r^n)$-module in the Lemma above is compatible with edge maps and thus it yields a module structure on $\KRHn(L)$ when the link $L$ is presented by a braid diagram $D$ in $n$ strands. This module structure is not a link invariant on the nose, but one can expect it to be after identifying variables that correspond to the same component of the link.\end{remark}

The link invariance of $\KRHn$ as a $\Z$-module, albeit directly established through the Rasmussen spectral sequence, admits an alternative verification by checking INS conditions, as sketched at the end of our first chapter. We close this section with the verification of such conditions for $\nHH_\tT^*$.

\begin{corollary}\label{twistedisisn}
\begin{enumerate}[label=(\alph*)]
\item\label{a1} For a sequence $J_1=(i_1,\dots,i_{l-1})$ not containing either $\pm(r-1)$ and $J_2=(i_1,\dots,i_{l-1},r-1)$, the inclusion $\iota:\BB_\tT(w_{J_1})\to\BB_\tT(w_{J_2})$ induces an injective positive edge map $\iota^*$ in $\nHH_\tT^*$.\\

\item\label{b1} For a sequence $J_1=(i_1,\dots,i_{l-1})$ not containing either $\pm(r-1)$ and $J_2=(i_1,\dots,i_{l-1},-(r-1))$, the inclusion $\iota:\BB_\tT(w_{J_1})\to\BB_\tT(w_{J_2})$ induces a surjective negative edge map $\iota_*$ in $\nHH_\tT^*$.\\

\item\label{c1} For positive $s$ and a sequence $J=(i_1,\dots,i_{l-3},s,s-1,s)$ or $J=(i_1,\dots,i_{l-3},s-1,s,s-1)$, let $\mathcal{X}=G_{i_1}\times_{\tT}\cdots\times_{\tT}G_{i_{l-3}}\times_{\tT}  (\tT^{s-2}\times  \mathrm U (3)\times \tT^{r-s-1})$. Then the map $\BB_\tT(w_J)\to \mathcal{X}$ induced by multiplication is an injective homomorphism in $\nHH_\tT^*$.\end{enumerate}

\end{corollary}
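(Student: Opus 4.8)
The plan is to derive all three parts from the structural decompositions already established in the proof of Lemma~\ref{tophochdeg}: parts \ref{a1} and \ref{b1} correspond to Cases (i)--(ii) there (an edge at the last strand), and part \ref{c1} is exactly Case (iv) (the $\mathrm{U}(3)$ collapse). The uniform principle I would invoke is that every map occurring in these statements --- a positive edge inclusion, a negative (wrong-way) edge map, or the $\mathrm{U}(3)$-multiplication map --- is a morphism of $\coebs$-modules, equivalently of $\HH^*(\mathrm U)$-modules over $\mathrm U$, so it commutes with multiplication by $\beta_n$ and descends to a map of $\nHH^*_\tT$. The work is then only to identify the descended map explicitly and read off injectivity or surjectivity.

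For \ref{a1} and \ref{b1}, let $J_1$ play the role of the word $I$ of Case (i) and let $|J_2|=(i_1,\dots,i_{l-1},r-1)$ play the role of the word with a single $r-1$ in Case (ii). First I would check that the product decompositions \ref{fibr1} and \ref{fibr2} are natural with respect to $\iota$, so that under them $\iota$ becomes $\mathrm{id}\times(\Delta\hookrightarrow\mathrm{SU}(2))$, the maximal-torus inclusion in the last $\mathrm{SU}(2)$ factor. Passing to Borel cohomology and then to $\beta_n$-homology --- where, exactly as in the proof of Lemma~\ref{tophochdeg}, the collapse of the double-complex spectral sequence presents $\nHH^*_\tT$ as the tensor product over $\HH^*_{\tT^{(r-1)}}$ of the Bott--Samelson factor $H(\hoch(B^{(r-1)}_{J_1}),\beta_n)$ with the one-factor contribution --- reduces everything to the pair $(\mathrm{SU}(2),\Delta)$. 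There the $\beta_n$-differentials are $\tfrac{x_{r-1}^n-x_r^n}{x_{r-1}-x_r}\oldgamma$ on $\htor^*\otimes\Lambda(\oldgamma)$ and $(x_{r-1}^n-x_r^n)\hat x$ on $\htor^*\otimes\Lambda(\hat x)$, so for degree reasons together with compatibility with these differentials the restriction $j^*$ is forced to send $\oldgamma\mapsto(x_{r-1}-x_r)\hat x$, and the Gysin map $j_*$ is forced to send $1\mapsto(x_{r-1}-x_r)$ (the equivariant Euler class of the normal bundle of $\Delta$) and $\hat x\mapsto\oldgamma$. Taking homology, $j^*$ becomes multiplication by $x_{r-1}-x_r$ from $\htor^*/(x_{r-1}^{n-1}+\cdots+x_r^{n-1})$ to $\htor^*/(x_{r-1}^n-x_r^n)$, injective since $x_{r-1}-x_r$ is a non-zero-divisor, while $j_*$ becomes the evident surjection $\htor^*/(x_{r-1}^n-x_r^n)\twoheadrightarrow\htor^*/(x_{r-1}^{n-1}+\cdots+x_r^{n-1})$ coming from the ideal containment $(x_{r-1}^n-x_r^n)\subseteq(x_{r-1}^{n-1}+\cdots+x_r^{n-1})$. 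Finally I would tensor back up over $\HH^*_{\tT^{(r-1)}}$: surjectivity survives by right-exactness, and for injectivity I would note that the cokernel of the first map is $\htor^*/(x_{r-1}-x_r)$, free over $\HH^*_{\tT^{(r-1)}}$ (after the spare-variable cancellation of loc.\ cit.\ one has $\htor^*\cong\HH^*_{\tT^{(r-1)}}[x_{r-1}]$, making all the relevant quotients free), so the monomorphism splits and stays injective after tensoring.

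For \ref{c1} I would use Case (iv) directly: the splitting \ref{splittingU3}, $B_J\cong A\oplus B_{J'}\{2\}$, is an isomorphism of $\coebs$-modules in which $A$ is the image of the equivariant cohomology of $\mathcal X$. Applying Hochschild homology and then $\beta_n$-homology preserves the $\coebs$-module structure, hence the direct-sum decomposition, so $\nHH^*_\tT(\mathcal X)=H(\hoch(A),\beta_n)$ sits inside $\nHH^*_\tT(\BB_\tT(w_J))$ as a direct summand and the map induced by $\BB_\tT(w_J)\to\mathcal X$ is precisely the inclusion of that summand, which is injective; the case $J=(i_1,\dots,i_{l-3},s-1,s,s-1)$ follows from the symmetric splitting used in the same argument.

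The only genuinely non-formal points are the verifications inside the second paragraph: that \ref{fibr1}--\ref{fibr2} are natural in the edge maps (so $\iota$ and its wrong-way map literally split off the $\mathrm{SU}(2)$ factor), that the equivariant Euler class of $\Delta\subset\mathrm{SU}(2)$ for the conjugation action equals $x_{r-1}-x_r$ up to sign, and that $\htor^*/(x_{r-1}-x_r)$ is free over $\HH^*_{\tT^{(r-1)}}$. I expect the normal-bundle Euler-class bookkeeping to be the point where a careless argument could slip, since the explicit identification of $j^*$ and $j_*$ rests on it; everything else is either a one-line ideal computation or a formal consequence of Lemma~\ref{tophochdeg}.
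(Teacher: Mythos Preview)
Your proposal is correct and follows essentially the same route as the paper's proof. Both reduce parts \ref{a1}--\ref{b1} to the rank-one maps $\iota^*(\oldgamma)=(x_{r-1}-x_r)\hat x$ and $\iota_*(\hat x)=\oldgamma$ between the $\Delta$ and $\mathrm{SU}(2)$ factors via the decompositions of Cases (i)--(ii) in Lemma~\ref{tophochdeg}, and both handle \ref{c1} by invoking the $\coebs$-module splitting $B_J\cong A\oplus B_{J'}\{2\}$ from Case (iv). The one place you go further than the paper is the tensoring step: where the paper simply asserts that injectivity and surjectivity of the rank-one maps survive the tensor product with $H(\hoch(B^{(r-1)}_{J_1}),\beta_n)$ over $\HH^*_{\tT^{(r-1)}}$, you supply the reason (right-exactness for surjectivity; freeness of the cokernel $\htor^*/(x_{r-1}-x_r)$ for injectivity), which is a welcome elaboration.
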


\begin{proof} This is a corollary of the proof of Lemma \ref{tophochdeg}, so we rely on the details thereof. For parts \ref{a1} and \ref{b1}, we have the two maps $\iota^*:\htor^*(\mathrm{SU(2)})\to \htor^*(\mathrm{\Delta})$ and $\iota_*:\htor^*(\mathrm{\Delta})\{2\}\to \htor^*(\mathrm{SU(2)})$ induced by inclusion $\iota:\Delta\to\mathrm{SU(2)}$. At the level of $\twE_{\infty}^{*,*}$ we have the decompositions 
\begin{align*}H(\hoch(B_{J_1}),\beta_n)&=H(\hoch(B^{(r-1)}_{J_1}),\beta_n)\otimes_{\HH_{\tT^{(r-1)}}^*} H(\htor^*\otimes\Lambda(\hat x),(x_{r-1}^n-x_r^n)\hat x),\\ H(\hoch(B_{J_2}),\beta_n)&=H(\hoch(B^{(r-1)}_{J_1}),\beta_n)\otimes_{\HH_{\tT^{(r-1)}}^*} H\left(\htor^*\otimes\Lambda(\oldgamma),\frac{x_{r-1}^n-x^n_r}{x_{r-1}-x_r}\gamma\right).\end{align*} The positive (resp. negative) edge map induces the identity on $H(\hoch(B^{(r-1)}_I),\beta_n)$ tensored with the left to right (resp. right to left) map in $$\begin{tikzcd}[row sep=10]H\left(\htor^*\otimes\Lambda(\oldgamma),\frac{x_{r-1}^n-x^n_r}{x_{r-1}-x_r}\oldgamma\right)\ar[d,equal]&H(\htor^*\otimes\Lambda(\hat x),(x_{r-1}^n-x_r^n)\hat x)\ar[d,equal]\\ \oldgamma\cdot\htor^* \left/{\left(\frac{x^n_{r-1}-x_r^n}{x_{r-1}-x_r}\right)}\right.\ar[r,shift left=.5ex,"\iota^*"] & \hat x\cdot{\htor^*}/{(x_{r-1}^n-x_r^n)}\ar[l,shift left=.5ex,"\iota_*"],
\end{tikzcd}$$ which are determined by $\iota^*(\oldgamma)=(x_{r-1}-x_r)\hat x$ and $\iota_*(\hat x)=\oldgamma$. The map $\iota^*$ is injective, while $\iota_*$ is surjective, and it is easy to verify that this remains true after tensoring with $H(\hoch(B_I^{(r-1)}),\beta_n)$ over $\HH^*_{\tT^{(r-1)}}$. This gives the desired INS properties on $\twE_{\infty}^{*,*}$ and thus on $\nHH_\tT^*$.

For part \ref{c1}, the splitting in (\ref{splittingU3}) induces a splitting of $\hoch(\coebs)$-modules
$$\hoch(B_J)= \hoch(A)\oplus \hoch(B_{J'})\{2\}.$$
Since the differentials in $\twE^{*,*}_*$ respect this splitting, the map $\BB_{\tT}(w_J)\to\mathcal X$ still gives a splitting at $\twE^{*,*}_\infty$, hence at $\nHH_\tT^*(w_J)$.
\end{proof}

\section{Equivariant $\sln$ link homology and twisted cohomology}\label{twisted}

In this chapter we consider what can be called an integral universal equivariant $\sln$-link homology over the extended coefficient ring $\R=\Z [b_1,b_2,\dots]$, in which $b_i$ has cohomological degree $-2i$. Up to shifts, the homology of the unknot $\unlink$ will be
$$\KRHu(\unlink)=\frac{\R [\![x]\!]}{(b_1x+b_2x^2+b_3x^3+\cdots)}.$$

One way to do this is to follow the recipe from the previous section by enlarging the base ring: instead of $\htor^*(X)$ we work with extended coefficients $\hcal_\tT^*(X)=\htor^*(X)\ctp\R$ and then we get a $d_-$ differential by pulling back the `universal' class
$$\beta_u=\sum_{i=1}^\infty \beta_i b_i \in \hcal^*(\mathrm U).$$ 
Taking 
$$\uHH_\tT^*(X)=H(\hcal_\tT^*(X),\beta_u)$$
for any $\tT$-space $X$ over $\Ur$, and in particular
$$\uHH_\tT^*(w_J)=H(\hcal_\tT^*(\BB_\tT(w_J)),\beta_u),$$
we get a theory which is formally similar to $\nHH_\tT^*$. We should refer to $\uHH_\tT^*$ as an algebraically twisted version of $\hcal_\tT^*$. In fact, the integral version $\nHH_\tT^*(w_J)$ is easily recovered through the following specialization. Let $\R\to \Z$ be the evaluation that sets $b_n\mapsto 1$ and $b_i\mapsto 0$ for $i\neq n$. Then 
$$(\HH_\tT^*(\BB_\tT(w_J))\ctp\R,\beta_u)\otimes_\R\Z=(\HH_\tT^*(\BB_\tT(w_J)),\beta_n),$$
which is precisely the chain complex computing $\nHH_\tT^*(w_J)$. Notice that the total cohomological degree gets raised by $2n$ since $|b_n|=-2n$. 

The results about $\nHH_\tT^*$ from the previous section apply with minor adjustments to $\uHH_\tT^*$ as well.

\begin{lemma}\label{tophochdeguniv} Given a positive braid word $w_J\in\Br(r)$, the complex $(\htor^*(\BB_\tT(w_J))\ctp\R,\beta_u)$ can be equipped with the Serre filtration to obtain a spectral sequence of signature
$$\widehat E_1^{*,*}=\hoch(B_J)\ctp\R\Longrightarrow\uHH_\tT^*(w_J)$$
with differential $d_1$ given by the action of $\beta_u$.
\begin{enumerate}[label=(\alph*)]
\item The homology $\widehat E_2^{*,*}=H(\hoch(B_J)\ctp\R,\beta_u)$ is concentrated in top Hochschild degree $r$. It is torsion free over $\R$ and the $\R[\![x_1,\dots,x_r]\!]$-module structure factors through $\R[\![x_1,\dots,x_r]\!]/(p(x_1),\dots,p(x_r))$, where $p(x)=\sum_{i>0}b_ix^i$.
\item\label{b2} The spectral sequence $\widehat E^{*,*}_*$ collapses into a single line at the second page, giving canonical isomorphisms
$$\uHH_\tT^*(w_J)\cong \widehat E_2^{*,r}=H(\hoch(B_J)\ctp\R,\beta_u).$$
In particular, $\uHH_\tT^*(w_J)$ lives only in total cohomological degrees with the same parity as $r$.
\item\label{c2} The functor $\uHH_\tT^*$ satisfies the INS conditions. More specifically, the three maps indicated in Corollary \ref{twistedisisn} also induce injections (surjection in the case of $\iota_*$) in $\uHH_\tT^*$.
\end{enumerate}
\end{lemma}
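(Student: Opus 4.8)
The plan is to reproduce the proof of Lemma~\ref{tophochdeg} essentially word for word, extending scalars along $\Z\to\R$ and replacing the odd class $\beta_n$ by the universal class $\beta_u=\sum_{i>0}b_i\beta_i$. First I would set up the spectral sequence: equip $\hcal_\tT^*(\BB_\tT(w_J))=\htor^*(\BB_\tT(w_J))\ctp\R$ with the $\R$-linear extension of the Serre filtration of the fibration $\tT\to\BB_{h\tT}(w_J)\to\BTS_{h\tT}(w_J)$. By Proposition~\ref{serressprop} the associated graded of $\htor^*(\BB_\tT(w_J))$ is $\hoch(B_J)$, which is $\Z$-free, so the associated graded of $\hcal_\tT^*(\BB_\tT(w_J))$ is $\hoch(B_J)\ctp\R$ and the filtration spectral sequence converges (it will degenerate at a finite page). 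The one feature absent from the finite case is this: the argument that puts $\beta_n$ in $\fil^{2n}$ (the filtration lemma of Section~\ref{boreleqcohlink}) puts each $\beta_i$ in $\fil^{2i}$, but this shift by $2i$ is cancelled cohomologically by $|b_i|=-2i$, so every summand $b_i\beta_i$ of $\beta_u$ acts with cohomological degree $1$ and Hochschild degree $1$. Regrading $\widehat E_1=\hoch(B_J)\ctp\R$ by its Hochschild degree, so that multiplication by $\beta_u$ raises the index by exactly one, $\beta_u$ assembles into a single differential $d_1$, giving $\widehat E_2=H(\hoch(B_J)\ctp\R,\beta_u)$. Verifying that $\beta_u$ contributes as this single differential, rather than as the infinite family $b_1\beta_1,b_2\beta_2,\dots$ that the naive Serre filtration would produce, together with the flatness bookkeeping over the non-Noetherian ring $\R$ and the power-series completions hidden in $\ctp$, is the point I expect will require the most care.

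For part (a) I would run the induction on the complexity $N(J)=r+\sum_j i_j$ of Lemma~\ref{tophochdeg}. In the base case $r=1$, $J=\emptyset$, one has $\hoch(B_\emptyset)\ctp\R=\R[\![x]\!]\otimes\Lambda(\hat x)$ with $\beta_u=p(x)\hat x$, so $\widehat E_2=\bigl(\R[\![x]\!]/(p(x))\bigr)\hat x$; this is torsion free over $\R$ because the ideal $(p(x))$ is $\R$-saturated, no prime of $\R$ dividing all of $b_1,b_2,\dots$, and it is visibly a module over $\R[\![x]\!]/(p(x))$. The inductive step is the four cases of Lemma~\ref{tophochdeg} carried out with $x^n$ replaced everywhere by the power series $p(x)$ and all modules base changed to $\R$ (all of them being $\Z$-free, base change is exact). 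Cases (i) and (ii) use the $\tT$-equivariant splittings (\ref{fibr1}) and (\ref{fibr2}): collapse of the Eilenberg-Moore spectral sequence yields the isomorphism (\ref{decompHH}) after extension of scalars to $\R$, compatibly with $\beta_u$ since each $\beta_i$, hence $\beta_u$, is primitive; on the rightmost tensor factor $\beta_u$ acts as $(p(x_{r-1})-p(x_r))\hat x$ in case (i) and as $\frac{p(x_{r-1})-p(x_r)}{x_{r-1}-x_r}\gamma$ in case (ii), whose $\beta_u$-homology is again free over $\HH_{\tT^{(r-1)}}^*\ctp\R$; the associated double-complex spectral sequence then collapses at its second page into a single spot, the $\R$-coefficient analog of (\ref{decompHn}), which is $H(\hoch(B_I)\ctp\R,\beta_u)$ concentrated in Hochschild degree $r$; the module statement uses $p(x_{r-1})-p(x_r)\equiv-p(x_r)\pmod{p(x_{r-1})}$ together with the inductive vanishing $p(x_{r-1})M=0$. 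Cases (iii) and (iv) use the $\coebs$-module splittings $B_J\cong B_{J'}\oplus B_{J'}\{2\}$ and $B_J\oplus B_{\widetilde J'}\{2\}\cong B_{\widetilde J}\oplus B_{J'}\{2\}$ of (\ref{splittingU3}), which survive $\hoch(-)$, base change to $\R$, and multiplication by $\beta_u\in\hoch(\coebs)\ctp\R$, reducing to words of lower complexity.

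Parts (b) and (c) are then formal. For (b): by (a) the page $\widehat E_2=H(\hoch(B_J)\ctp\R,\beta_u)$ lies in the single Hochschild degree $r$, so every later differential $\widehat d_s$ ($s\geq2$) must change Hochschild degree and hence vanishes; thus $\widehat E_2=\widehat E_\infty$, the induced filtration on $\uHH_\tT^*(w_J)$ has a single nonzero quotient, and $\uHH_\tT^*(w_J)\cong\widehat E_2^{*,r}=H(\hoch(B_J)\ctp\R,\beta_u)$ canonically; the parity statement follows because sitting in Hochschild degree $r$ forces the exterior factor $\hat x_1\cdots\hat x_r$, contributing $r$ to the cohomological degree on top of even contributions from the polynomial part. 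For (c) I would argue exactly as in Corollary~\ref{twistedisisn}: in cases (i) and (ii) the inclusion $\Delta\hookrightarrow\mathrm{SU}(2)$ induces $\iota^*$ and $\iota_*$ between the $\mathrm{SU}(2)$- and $\Delta$-factors, with $\iota^*(\gamma)=(x_{r-1}-x_r)\hat x$ injective and $\iota_*(\hat x)=\gamma$ surjective on the respective $\beta_u$-homologies, and these properties persist after tensoring over $\HH_{\tT^{(r-1)}}^*\ctp\R$ with $H(\hoch(B^{(r-1)}_I)\ctp\R,\beta_u)$; in the situation of cases (iii) and (iv) the $\coebs$-module splitting $\hoch(B_J)\ctp\R\cong\hoch(A)\ctp\R\oplus\hoch(B_{J'})\ctp\R\{2\}$ is respected by every differential of $\widehat E_*$, so the multiplication map $\BB_\tT(w_J)\to\mathcal{X}$ induces a split injection on $\widehat E_\infty$, hence on $\uHH_\tT^*$.
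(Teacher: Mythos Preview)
Your proposal is correct and follows exactly the route the paper intends: the paper does not supply a separate proof of this lemma, remarking only that ``the results about $\nHH_\tT^*$ from the previous section apply with minor adjustments to $\uHH_\tT^*$ as well,'' and your write-up is precisely a careful execution of those adjustments to Lemma~\ref{tophochdeg} and Corollary~\ref{twistedisisn}. The one point you flag as needing the most care---that with the literal Serre (Soergel) filtration the summands $b_i\beta_i$ would land on different pages $d_{2i}$, and that one must regrade by Hochschild degree so that $\beta_u$ becomes a single $d_1$---is exactly the ``minor adjustment'' the paper is silently making when it writes $\widehat E_1=\hoch(B_J)\ctp\R$ with $d_1=\beta_u$; your observation that every $b_i\beta_i$ raises Hochschild degree by $1$ is the mechanism that makes this regrading legitimate, and since the Hochschild degree is bounded by $r$ the resulting filtration is finite and convergence is automatic.
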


Given then a link $L$ represented by a braid word $w_I$, we can apply $\uHH_\tT^*$ to the cube $ \sS_J(w_\bullet):\II\to \mathrm{Spaces}^\tT_* $ and obtain a cubical complex $\bigoplus_{J\in\II}\uHH_\tT^*$ with cubical differential $d_v$ defined in the usual way.

\begin{theorem}
Let $L$ be a link presented by a braid word $w_I$ and $\R=\Z[b_1,b_2,\dots]$. The homology
$$\KRHu(L)=H_v\left(\bigoplus_{J\in\II}\uHH_\tT^*(w_J)\right)=H_v \left(\bigoplus_{J\in\II}H(\hoch(B_J)\ctp\R,\beta_u)\right)$$
is a link invariant up to overall degree shifts. Its value at the unlink is given by
$$\KRHu(\unlink)=\frac{\Z[b_1,b_2,\dots] [\![x]\!]}{(b_1x+b_2x^2+b_3x^3+\cdots)}.$$
\end{theorem}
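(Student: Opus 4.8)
The plan is to reproduce, over the extended ring $\R=\Z[b_1,b_2,\dots]$, the argument that established link invariance of $\KRHn$, with Lemma~\ref{tophochdeguniv} now playing the role of Lemma~\ref{tophochdeg}. First I would organize the relevant data as the double complex $\bigl(\bigoplus_{J\in\II}\hoch(B_J)\ctp\R,\ d_-,\ d_v\bigr)$, with $d_-$ given by (the pullback of) multiplication by the universal class $\beta_u\in\hcal^1(\U)$ and $d_v$ the usual cubical differential, bigraded by Hochschild and cubical degree so that $d_-$ has bidegree $(1,0)$ and $d_v$ has bidegree $(0,1)$. Since each $\hoch(B_J)$ is $\Z$-free (Lemma~\ref{tophochdeg}(a)), this is $H_+$ of the $\R$-linear triple complex $\bigoplus_J\choch(B_J)\ctp\R$, so its total homology is $H_{v-}H_+$ of the latter; and by part~\ref{b2} of Lemma~\ref{tophochdeguniv} one has $\KRHu(L)=H_v\bigl(\bigoplus_J\uHH_\tT^*(w_J)\bigr)=H_vH_-H_+\bigl(\bigoplus_J\choch(B_J)\ctp\R\bigr)$.

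Next I would bring in the two spectral sequences of this double complex, exactly as in the integral case. Filtering by Hochschild degree yields $\prescript{I}{}E_1^{*,*}=\bigoplus_JH(\hoch(B_J)\ctp\R,\beta_u)=\bigoplus_J\uHH_\tT^*(w_J)$, which by Lemma~\ref{tophochdeguniv}(a) is concentrated in the single Hochschild degree $r$; hence it is supported on one horizontal line, $d_1=d_v$ is the last possibly nonzero differential, the sequence degenerates at $\prescript{I}{}E_2$, and, only one filtration level being occupied, there is no extension problem, which gives the canonical identification $H_vH_-H_+\cong H_{v-}H_+$. Filtering by cubical degree yields the Rasmussen spectral sequence, whose first page $\prescript{II}{}E_1^{*,*}=H_vH_+\bigl(\bigoplus_J\choch(B_J)\ctp\R\bigr)$ is, by the $\Z$-freeness of $\hoch(B_J)$, the integral HOMFLY-PT homology of $L$ base-changed along $\Z\to\R$, hence a link invariant by Theorem~\ref{homflypt}; the same naturality argument as in the $\KRHn$ case then shows that all later pages and the abutment $H_{v-}H_+$ are link invariants. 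Combining these gives that $\KRHu(L)$ is a link invariant up to overall shift. As an alternative, link invariance also follows at once from part~\ref{c2} of Lemma~\ref{tophochdeguniv} together with the INS criterion recalled in the first section.

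For the value at the unknot I would take the one-strand trivial braid $w_\emptyset=1\in\Br(1)$, whose cube of resolutions has the single vertex $\choch(B_\emptyset)\ctp\R=\R[\![x]\!]\otimes\Lambda(\hat x)$ with $x_1=y_1=x$. Here $d_+=0$, and $\beta_u$ pulls back to $p(x)\,\hat x$ with $p(x)=\sum_{i>0}b_ix^i$, just as $\beta_n$ pulled back to $x^n\hat x$ in the integral computation. Since $\R[\![x]\!]$ is an integral domain, $p(x)$ is a non-zero-divisor, so the $d_-$-homology of $\R[\![x]\!]\xrightarrow{\,\cdot p(x)\,}\R[\![x]\!]\,\hat x$ is $\R[\![x]\!]/(p(x))$, placed in Hochschild degree $1$; and $H_v$ over a single-vertex cube is the identity, so $\KRHu(\unlink)\cong\R[\![x]\!]/(b_1x+b_2x^2+\cdots)$ up to overall shift.

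I do not anticipate a genuine obstacle: the essential work is Lemma~\ref{tophochdeguniv}, and what remains is a careful transcription of the $\KRHn$ argument. The only points that deserve attention are that the completed tensor product $\ctp\R$ commutes with the homology operations appearing above — which is where the finite-rank $\Z$-freeness of $\hoch(B_J)$ is used, so that HOMFLY-PT homology base-changes as claimed — and the integral-domain property of $\R[\![x]\!]$ needed in the unknot computation.
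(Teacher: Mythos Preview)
Your proposal is correct and follows essentially the same approach as the paper: the paper does not spell out a separate proof for this theorem but indicates that ``the results about $\nHH_\tT^*$ from the previous section apply with minor adjustments to $\uHH_\tT^*$ as well,'' i.e.\ one reruns the $\KRHn$ argument with Lemma~\ref{tophochdeguniv} in place of Lemma~\ref{tophochdeg}, and that is precisely what you do (including the same two-spectral-sequence reasoning, the collapse from concentration in Hochschild degree~$r$, the Rasmussen spectral sequence for invariance, the INS alternative, and the direct one-strand computation for the unknot).
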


The functor $\uHH_\tT^*$ and the link homology $\KRHu$ first appeared in \cite{nitubroken2}, in relation to a twisted equivariant cohomology theory that we will denote here by $\thetah^*_\tT$. The algebraically twisted cohomology $\uHH_\tT^*$ should be though of as an approximation to the topologically twisted $\thetah^*_\tT$, in the sense that there exists a spectral sequence from the former to the latter. The remainder of this chapter is devoted to explain this framework.

Generally speaking, the cohomology $\thetah^*_\tT$ is also a functor of $\tT$-spaces over $\Ur$, such as $\BB_\tT(w_J)$. It is a \emph{topologically twisted} version of the cohomology theory associated to the spectrum $\hcal=\HH\Z\wedge B\mathrm U_+$. Note that this spectrum satisfies $\pi_*(\hcal)=\HH_*(B\mathrm U)=\R$, and so $\hcal^*(-)=\HH^*(-)\ctp\R$ as the notation suggests. For completeness, we give a presentation of the formalism of topological twists in Section \ref{deftwisted}, then in Section \ref{untwtotw} we leap directly into the spectral sequence that relates $\uHH_\tT^*$ and $\thetah^*_\tT$.

\subsection{A definition of $
\thetah_{\tT}^*$} \label{deftwisted}
We proceed to give a definition using the conventions for twisted cohomology in \cite{satiwester}. This language may also come handy when exploring other known twisted cohomology theories of the spaces $\BB_\tT(w_J)$ in future work.

Let us start by reviewing the notion of twisted cohomology we will be working with. Recall that generalized cohomology theories are represented by spectra $\EE$, in the sense that the functor $\EE^\bullet(X)=\pi_0 F(X,\Sigma^\bullet\EE)$ satisfies the usual Eilenberg-Steenrod axioms. Here $F(-,-)$ denotes the mapping spectrum.

Let $\EE$ be a connective $A_\infty$-ring spectrum, and let $\GL(\EE)\subseteq\Omega^\infty \EE$ consist of those components which are invertible in $\pi_0(\EE)$. There is an associated principal fibration
$$\GL(\EE)\to E\GL(\EE) \to B\GL(\EE).$$ A \emph{(topological) twist} of $\EE$ by $Z$ is a map $\theta:Z\to B\GL(\EE)$. Given $f:X\to Z$, we have a principal $\GL(\EE)$-bundle $P_f\to X$ by pulling back the universal bundle along $\theta\circ f:X\to B\GL(\EE)$. The \emph{generalized Thom spectrum}, defined as $X^f := \Sigma_+^\infty P_f \wedge_{\Sigma_+^\infty\GL(\EE)}\EE$ promotes $X$ into an $\EE$-module by using the twist. We define the \emph{twisted $\EE$-cohomology of $X$ with twist $\theta$} as
$$\prescript{\theta}{}\EE^k(X;f)=\pi_{0}F_{\EE}\left(X^f,\Sigma^k\EE\right),$$
where $F_{\EE}(-,-)$ is now the $\EE$-module mapping spectrum. Homotopic choices of $f$ produce isomorphic cohomologies $\prescript{\theta}{}\EE^k(X;f)$, and we will often omit $f$ from the notation.

In the situation at hand, we consider the space $B\mathrm U$ with the $A_\infty$-space structure $\oplus$ classifying the Whitney sum of complex vector bundles. From this, we get an $A_\infty$-ring spectrum structure on $\hcal=\HH\Z\wedge \bup$. To get a twist for $\hcal$, let $B\mathrm U\simeq\Omega^\infty(\mathbb S\wedge B\mathrm U_+)\to \GL(\hcal)\subseteq \Omega^\infty(\HH\Z\wedge B\mathrm U_+)$ be the map induced from the unit $\mathbb{S}\to\HH\Z$. Composing with the projection $\Z\times B\U\to B\U$, we have a map of $A_\infty$-spaces $\Z\times B\U \to \GL(\hcal)$, which can be delooped into a map $$B_{\oplus}(\Z\times B\mathrm U)\to B\GL(\hcal).$$ Now, Bott periodicity gives an equivalence $\mathrm{U}\simeq B_{\oplus}(\Z\times B\mathrm U)$. Thus we obtain the desired twist $\theta: \mathrm U\to B\GL(\hcal)$.

Given a $\tT$-space $m:X\to\Ur$ we then use the sequence of maps
$$\begin{tikzcd}[column sep=8mm] X_{h\tT}\ar{r}{m_{h\tT}}& \Ur_{h\tT}\ar{r}{\lambda}&\mathrm U \ar{r}{\theta}& B\GL(\hcal).\end{tikzcd}$$
to define twisted equivariant cohomology.

\begin{definition}
Let $m:X\to\Ur$ be a $\tT$-space over $\Ur$. Define
$$\thetah^*_\tT(X)=\thetah^*(X_{h\tT};\lambda \circ m_{h\tT}).$$
\end{definition}
\begin{remark} 
Let us compare this definition to the one presented in the source \cite{nitubroken2}. In the latter, one begins with the spectrum $\hcal$ and gets a theory $\prescript{\theta}{}{\mathscr{H}}^*_{\Ur}$ for $\Ur$-spaces over $\Ur$ (acting on itself by conjugation). Given a $\tT$-space $X$ over $\Ur$, there is an induced $\Ur$-space over $\Ur$ by composing
$$\Ur^{\mathsf r}\times_{\tT} X\to \Ur^{\mathsf r}\times_{\tT} \Ur\to\Ur,$$
the second map sending $[g,h]\mapsto ghg^{-1}$. The superscript in $\Ur^{\mathsf r}$ indicates $\tT$-action by right multiplication as opposed to our usual conjugation action.  Then the precise identification is
$$\thetah_{\tT}^*(X)\cong \prescript{\theta}{}{\mathscr{H}}^*_{\Ur}(\Ur^{\mathsf r}\times_{\tT} X).$$

\end{remark}

\subsection{The untwisted-to-twisted spectral sequence and link homology}\label{untwtotw}
Given a $\tT$-space $X$, one has a $\hcal^*_{\tT}(X)$-module $\prescript{\theta}{}{\hcal}^*_{\tT}(X)$. Additionaly, this comes with an `untwisted-to-twisted' spectral sequence with differentials determined by multiplication by an odd class. This is precisely how an algebraically twisted cohomology like $\uHH_\tT^*$ appears in this setting. 

The spectral sequence can be parsed using the language of the previous section as follows. First notice that, besides the cohomological degree, there is a $b$-\emph{degree} in the ring $\R=\Z[b_1,b_2,\dots]$. This is the polynomial grading in which all $b_i$ are considered of degree 1.
The ring spectrum $\hcal$ admits a (co)-filtration by $\hcal$-modules 
\begin{equation}\label{bufiltration}\hcal_0=\hcal \leftarrow\hcal_1\leftarrow\hcal_2\leftarrow\cdots\end{equation} which lifts the decreasing filtration by $b$-degree on $\R$. This induces, in turn, a filtration $\left\{\pi_0F_\hcal\left(X^f,\Sigma^k\hcal_\bullet\right)\right\}$. The following spectral sequence is the one associated to this filtration.

\begin{proposition} \emph{(\cite{nitubroken2}, Sec. 4)}
Let $X$ be a $\tT$-space over $\Ur$. There is a spectral sequence induced by the filtration (\ref{bufiltration}) with signature
\begin{equation}\label{nitu}\thetae_1^{*,*}(X)=\hcal^*_{\tT}(X)=\htor^*(X)\ctp \R \Longrightarrow \thetah^{*}_\tT(X),\end{equation} where the bigrading is such that $\thetae_1^{p,q}(X)$ consists of the elements in $\hcal^{p+q}_{\tT}(X)$ of $b$-degree $p$, and the differential $d_s$ has bidegree $(s,1-s)$.  The spectral sequence is natural in $\tT$-spaces over $\Ur$. All pages $\thetae^{*,*}_*(X)$ are $\hcal^*_{\tT}(X)$-modules, so by restriction they are modules over $\hcal^*(\mathrm U)=\HH^*(\mathrm U)\ctp \R$. The differential $d_1$ is entirely determined by the formula
$$d_1(1)=\left(\sum_{n=1}^{\infty}  \beta_nb_n\right)$$
where $\beta_n$ are pulled back from the usual primitive generators in $\HH^*(\mathrm U)$. 
\end{proposition}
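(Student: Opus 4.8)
The plan is to realise $\thetae_*^{*,*}(X)$ as the homotopy spectral sequence of the tower of spectra obtained by applying $F_\hcal(X^f,\Sigma^{*}-)$ to the cofiltration (\ref{bufiltration}), to identify its first page by showing that the twist $\theta$ becomes trivial on the associated graded, and to compute $d_1$ by analysing $\theta$ modulo the square of the augmentation ideal $I=(b_1,b_2,\dots)\subseteq\R$. For the bookkeeping: write $L_p$ for the cofibre of $\hcal_{p+1}\to\hcal_p$. Since $\hcal_p$ lifts $I^p$ we have $\pi_*L_p\cong I^p/I^{p+1}=\R_p$, and since module multiplication carries $\hcal_1\otimes\hcal_p$ into $\hcal_{p+1}$ (lifting $I\cdot I^p\subseteq I^{p+1}$) the augmentation ideal acts trivially on $L_p$, so $L_p$ is a module over $\hcal/\hcal_1=\HH\Z$ and hence $L_p\simeq\HH(\R_p)$, a wedge of shifted copies of $\HH\Z$ because $\R_p$ is free over $\Z$. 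As $\pi_k\hcal_p=0$ whenever $2p>k$, the tower $\hcal_\bullet$ has trivial homotopy limit, so $F_\hcal(X^f,\Sigma^{*}\hcal_\bullet)$ does too; the associated spectral sequence therefore converges conditionally, and since the induced filtration on $\thetah^*_\tT(X)=\pi_{-*}F_\hcal(X^f,\hcal_0)$ is complete, $\thetae_\infty^{*,*}$ is its associated graded. The differential $d_s$, being the $s$-th connecting homomorphism of the tower, has bidegree $(s,1-s)$; naturality in $\tT$-spaces over $\Ur$ is clear because $X\mapsto X^f$ and everything built from it is functorial; and since $X^f$ is a module over the untwisted Thom spectrum $(X_{h\tT})_+\wedge\hcal$ and the cofiltration consists of $\hcal$-modules, the whole tower — hence every page — is a tower (resp. collection) of $\hcal^*_\tT(X)$-modules with $\hcal^*_\tT(X)$-linear differentials, and by restriction along $X_{h\tT}\to\U$ they are $\hcal^*(\U)$-modules.

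For the $E_1$-page the key point is that $\theta$ dies on the associated graded: the composite $\U\xrightarrow{\theta}B\GL(\hcal)\to B\GL(\HH\Z)$ induced by the augmentation $\hcal\to\HH\Z$ is null, because in the construction of $\theta$ the map $B\mathrm U\to\GL(\hcal)$ lands in the component of the unit while $\GL(\HH\Z)$ is discrete, and the virtual-rank $\Z$-summand of $\U\simeq B(\Z\times B\mathrm U)$ is projected away. Hence $X^f\wedge_\hcal\HH\Z\simeq(X_{h\tT})_+\wedge\HH\Z$, and for each $L_p$, an $\HH\Z$-module, one gets $F_\hcal(X^f,\Sigma^{*}L_p)\simeq F_{\HH\Z}((X_{h\tT})_+\wedge\HH\Z,\Sigma^{*}L_p)$, with homotopy $\HH^*(X_{h\tT};\R_p)=\htor^*(X)\otimes\R_p$, i.e. the $b$-degree-$p$ summand of $\hcal^*_\tT(X)=\htor^*(X)\ctp\R$. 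Summing over $p$ gives $\thetae_1^{*,*}(X)=\hcal^*_\tT(X)$ with $\thetae_1^{p,q}(X)$ the elements of $b$-degree $p$ and total degree $p+q$.

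It remains to compute $d_1$. Since the pages are $\hcal^*_\tT(X)$-modules and $\thetae_1^{*,*}=\hcal^*_\tT(X)$ is free of rank one over itself on $1$, $d_1$ is multiplication by $d_1(1)\in\thetae_1^{1,0}(X)$, so it suffices to evaluate $d_1(1)$. By construction this is the leading term, in the filtration $\hcal_1\supseteq\hcal_2\supseteq\cdots$, of the image of $1$ under the boundary map of the cofibre sequence $\hcal_1\to\hcal_0\to\HH\Z$; equivalently it is the obstruction to lifting the unit class from $F_\hcal(X^f,\HH\Z)$ to $F_\hcal(X^f,\hcal/\hcal_2)$, which is exactly the pullback along $\lambda\circ m_{h\tT}$ of the first-order part of the twist, i.e. of $\theta$ composed with $B\GL(\hcal)\to B\GL(\hcal/\hcal_2)$. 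One then computes this last map directly: $\hcal/\hcal_2$ is the $\HH\Z$-split square-zero extension of $\HH\Z$ by $L_1=\hcal_1/\hcal_2\simeq\bigvee_{n\geq1}\Sigma^{2n}\HH\Z$, so the identity component of $\GL(\hcal/\hcal_2)$ is, multiplicatively, $\Omega^\infty L_1\simeq\prod_{n\geq1}K(\Z,2n)$, and the relevant part of $\theta$ is the exponential of the linearisation $\Sigma^\infty B\mathrm U\to\hcal_1\to\hcal_1/\hcal_2=\HH(I/I^2)$, which on homotopy is the Hurewicz map $\pi^s_*(B\mathrm U)\to H_*(B\mathrm U)$ followed by the projection onto indecomposables $I\to I/I^2$. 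The corresponding tuple of classes in $\HH^*(B\mathrm U;\Z)$ is thus the basis of primitives dual to the generators $b_n$, namely the power-sum (Newton) classes; delooping to $\U$, pulling back along $\lambda\circ m_{h\tT}$, and matching normalisations via the identity $\iota^*\lambda^*(\beta_n)=\sum_i x_i^n\hat x_i$ recalled above, the $n$-th component of the obstruction is the pulled-back primitive generator $\beta_n$ placed in the $b_n$-summand of $\R_1$. Hence $d_1(1)=\sum_{n\geq1}\beta_n b_n=\beta_u$; that $\beta_u^2=0$, consistent with $d_1^2=0$, follows from oddness of the $\beta_n$ and commutativity of the $b_n$.

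The step I expect to be the main obstacle is this last identification: pinning down the first-order part of the Bott-periodicity twist $\theta:\U\simeq B(\Z\times B\mathrm U)\to B\GL(\hcal)$ modulo $\hcal_2$ and matching it, with the correct sign and normalisation, to the primitive generators $\beta_n\in\HH^{2n+1}(\U)$ — this requires careful handling of the split square-zero extension $\hcal/\hcal_2$, of the duality between indecomposables of $H_*(B\mathrm U)$ and primitives of $\HH^*(B\mathrm U)$, of transgression to $\U$, and of the spurious virtual-rank ($\beta_0$) factor, which drops out only because $\R$ contains no $b_0$. The homotopy-theoretic preliminaries — convergence of the tower, the $\HH\Z$-linearity of the layers, and the module structure — are routine by comparison.
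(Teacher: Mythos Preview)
The paper does not prove this proposition: it is quoted from \cite{nitubroken2}, Section~4, and stated here without argument. There is therefore no proof in the present paper against which to compare your proposal.

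That said, your outline is a reasonable reconstruction of how such a statement is established. The architecture you describe --- build the tower spectral sequence from $F_\hcal(X^f,\Sigma^*\hcal_\bullet)$, identify each layer $L_p$ as an $\HH\Z$-module via the augmentation $\hcal\to\HH\Z$ so that the twist trivialises on the associated graded, and then read off $d_1$ as the first-order obstruction coming from $\theta$ modulo $\hcal_2$ --- is the standard approach to untwisted-to-twisted spectral sequences of this type, and is in line with what the cited source does. Your own diagnosis of the delicate point is accurate: the identification of the linearisation of $\theta$ with the primitives $\beta_n$ (and the disposal of the $\beta_0$ term, which is handled by the absence of $b_0$ in $\R$) is where the real work lies, and for a fully rigorous treatment one should consult \cite{nitubroken2} directly.
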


The proposition is saying, in particular, that
$$\thetae^{*,*}_2(X)=\uHH_\tT^*(X).$$
We know enough about $\hcal_u^*(w_J)$ to establish the degeneration of the spectral sequence for $X=\BB_\tT(w_J)$.

\begin{proposition}
Let $w_J$ be a positive braid word. The spectral sequence $\thetae_*^{*,*}(\BB_{\tT}(w_J))$ degenerates at the second page. Hence, $\uHH_\tT^*(w_J)$ is an associated graded $\hcal^*_{\tT}(w_J)$-module of $\thetah^*_{\tT}(\BB_{\tT}(w_J))$ with respect to the $b$-degree.
\end{proposition}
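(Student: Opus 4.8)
The plan is to deduce the degeneration from a parity obstruction, with all the substantive input borrowed from Lemma~\ref{tophochdeguniv}. First I would settle the grading bookkeeping. By the Proposition establishing the spectral sequence (\ref{nitu}), the higher differential $d_s$ for $s\ge 2$ has bidegree $(s,1-s)$; and since $\thetae_1^{p,q}(\BB_\tT(w_J))$ consists of the classes in $\hcal^{p+q}_\tT(\BB_\tT(w_J))$ of $b$-degree $p$, the total cohomological degree of an element of $\thetae_s^{p,q}(\BB_\tT(w_J))$ is $p+q$. Hence $d_s$ carries $\thetae_s^{p,q}$ into $\thetae_s^{p+s,q+1-s}$, raising the total cohomological degree by $(p+s)+(q+1-s)-(p+q)=1$. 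So every higher differential raises total cohomological degree by exactly one.

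Next I would feed in what is already known about the second page. We have $\thetae_2^{*,*}(\BB_\tT(w_J))=\uHH_\tT^*(w_J)$, and part \ref{b2} of Lemma~\ref{tophochdeguniv} tells us that $\uHH_\tT^*(w_J)$ is concentrated in total cohomological degrees of a single parity, namely the parity of $r$. Consequently, for $s\ge 2$ the source $\thetae_s^{p,q}$ and the target $\thetae_s^{p+s,q+1-s}$ of $d_s$ lie in cohomological degrees of opposite parity, so at least one of them is of parity different from $r$ and therefore vanishes; hence $d_s=0$. Running this argument inductively --- each successive page coincides with its predecessor and so remains concentrated in parity $r$ --- yields $\thetae_\infty^{*,*}(\BB_\tT(w_J))=\thetae_2^{*,*}(\BB_\tT(w_J))=\uHH_\tT^*(w_J)$, which is the degeneration claim.

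For the consequence about associated gradeds I would simply observe that convergence of (\ref{nitu}) to $\thetah^*_\tT(\BB_\tT(w_J))$ is part of the Proposition establishing it, so the degeneration identifies $\uHH_\tT^*(w_J)=\thetae_\infty^{*,*}$ with the associated graded of $\thetah^*_\tT(\BB_\tT(w_J))$ for the filtration induced by (\ref{bufiltration}), which is precisely the filtration by $b$-degree. Since every page of the spectral sequence --- in particular $\thetae_\infty^{*,*}$ --- is an $\hcal^*_\tT(w_J)$-module and the filtration (\ref{bufiltration}) is by $\hcal$-submodules, this identification is one of $\hcal^*_\tT(w_J)$-modules.

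I do not expect a genuine obstacle here. The real work is Lemma~\ref{tophochdeguniv}, whose top-Hochschild-degree concentration is exactly what forces the single-parity statement; given that, the remaining argument is the standard principle that an odd-degree differential annihilates a page concentrated in a single parity. The one point that deserves a careful check is the grading conventions of the Proposition: one must confirm that $p+q$ really is the total cohomological degree (equivalently, that the bidegree $(s,1-s)$ of $d_s$ translates into a $+1$ shift of cohomological degree), which it does by inspection of the definitions of $\thetae_1^{p,q}$ and the $b$-degree.
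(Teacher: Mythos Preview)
Your argument is correct and is exactly the paper's approach: invoke part~\ref{b2} of Lemma~\ref{tophochdeguniv} to get the single-parity concentration of $\thetae_2^{*,*}$, then observe that every differential has odd total cohomological degree. Your write-up is more detailed (the explicit grading check, the inductive propagation of parity, and the associated-graded remark), but the substance is identical.
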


\begin{proof}
From \ref{b1} in Lemma \ref{tophochdeguniv} we know that $\thetae_2^{*,*}(\BB_{\tT}(w_J))=\uHH_\tT^*(w_J)$ lives entirely in total cohomological degree $r$ modulo 2. All differentials of $\thetae_*^{*,*}(\BB_{\tT}(w_J))$ have total degree 1, so the spectral sequence must degenerate at the second page.
\end{proof}

\begin{remark} It can be verified that the filtration yielding the spectral sequence $\thetae_*^{*,*}(\BB_{\tT}(w_J))$ satisfies the Mittag-Leffler condition, guaranteeing the strong convergence of the spectral sequence. In turn, this guarantees that injectivity and surjectivity are detected on the $E_\infty$ page. Consequently, it follows that $\thetah^*_{\tT}(\BB_{\tT}(w_J))$ satisfies the relevant INS conditions that are already satisfied at the associated graded level according to \ref{c2} in Lemma \ref{tophochdeguniv}.
\end{remark}

\section{Appendix: Review of torus equivariant cohomology of flag varieties}
Recall that given a sequence of indices $J=(i_1,\dots,i_k)$, the equivariant cohomology of the Bott-Samelson space $\BTS_\tT(w_J)$ is given by the formula (\ref{deltaform}) in terms of the classes $\delta_j$. When we add such classes over all instances of a subindex $l$, we get classes
$$\hat\delta_l=\sum_{i_j=l} \delta_j$$ which appear in the formula for the right $\htor^*$-module structure on $\htor^*(\BTS_\tT(w_J))$. In fact, the class $\hat\delta_l$, as well as the bimodule structure, are pulled back from the equivariant cohomology of the flag variety $\Ur/\tT$ by means of the multiplication map 
\begin{align*}
\BTS_{\tT}(w_J)=G_{i_1}\times_T\cdots\times_T G_{i_k}/\tT & \to \Ur /\tT\\
[g_1,\dots,g_k]&\mapsto [g_1\cdots g_k].
\end{align*}

We will adapt the exposition in  \cite{kajiflag} to present their three descriptions of the structure of the cohomology $\htor^* (\Ur/\tT)$, which we have denoted by $\coebs$ in other chapters.   Let $W=N\tT/\tT$ be the Weyl group of $\Ur$, which is isomorphic to the symmetric group and is generated by the simple reflections $\sigma_1,\dots,\sigma_{r-1}$. We will need to talk about reflections associated to more general reflections: for a root $\alpha=x_i-x_j\in\htor^*=\Z[x_1,\dots,x_r]$, $i<j$, we have an associated transposition that we denote by $\sigma_\alpha=(i \,j)$.  

The terminal map $\Ur/\tT\to *$ induces a morphism $\htor^*\to \coebs$ that defines the left $\htor^*$-module structure on $\coebs$. A first description of $\coebs$ is
$$\coebs=\Z[x_1,\dots,x_r]\langle S_w\mid w\in W\rangle.$$ 
That is,  $\coebs$ is a free (left) $\Z[x_1,\dots,x_r]$-module with a basis given by the so called \emph{Schubert classes} $S_w$, indexed over the Weyl group. This is the \emph{additive} or \emph{Chevalley description} of the equivariant cohomology of $\Ur/\tT$. Given a Weyl group element $w\in W$ represented by a reduced (i.e. minimal length) word $w=\sigma_{i_1},\dots,\sigma_{i_k}$, we denote $S_w=S_{i_1,\dots,i_k}$ and the minimal length $k$ by $\ell(w)$.

Secondly, we have the \emph{GKM description} for Goresky, Kottwitz and MacPherson (see \cite{gkmequivariant}). Notice that the torus $\tT$ acts on the left of $\Ur/\tT$, and the Weyl group $W$ forms precisely the fixed point set. We can assemble the morphisms induced by inclusion of fixed points $W \hookrightarrow \Ur/\tT$ into the \emph{localization map}
\begin{equation}\label{localiz} \coebs\longrightarrow \prod_{w\in W} \htor^*.\end{equation}
The fact that this map is injective plays a central role in the equivariant setting of Schubert calculus. This injectivity is part of the content of the \emph{localization theorem}, which applies in any situation where cohomology is free over $\htor^*$ to begin with. In particular, it also applies
\begin{itemize} \item to $\Ur$ (under conjugation), where the $\tT$-fixed points form the maximal torus $\tT$;
\item to $\BTS_\tT(w_I)$, where fixed points are discrete and have the form $[\sigma_{i_1}^{\epsilon_1},\dots,\sigma_{i_k}^{\epsilon_k}]$, where each $\epsilon_i$ is either 0 or 1 and so they can be indexed by subsequences $J\in 2^I$ of $I=(i_1,\dots,i_k)$;
\item and to $\BB_\tT(w_I)$, where fixed points are discrete and have the form $[\sigma_{i_1}^{\epsilon_1},\dots,\sigma_{i_k}^{\epsilon_k}\tT]$, indexed also by subsequences $J\in 2^I$ with the condition that $w_J=\sigma_{i_1}^{\epsilon_1}\dots\sigma_{i_k}^{\epsilon_k}$ is the identity element in the symmetric group.

\end{itemize}   The result in these cases says the following:

\begin{proposition}\label{localizprop} The maps
\begin{align*}
\coebs=\htor^*(\Ur/\tT)&\longrightarrow \prod_{w\in W} \htor^*\\\coe= \htor^* (\Ur)&\longrightarrow  \htor^*(\tT) \\ \htor^* (\BTS_\tT(w_I))&\longrightarrow \prod_{J\in 2^I} \htor^* \\ \htor^* (\BB_\tT(w_I))&\longrightarrow \prod_{J\in 2^I,\, w_J=1} \htor^*(\tT)\end{align*}
induced by inclusions of fixed points are all injections.
\end{proposition}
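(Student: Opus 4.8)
The plan is to reduce all four statements to a single freeness assertion over $\htor^*=\Z[x_1,\dots,x_r]$ and then invoke the localization theorem, exactly along the lines sketched in the paragraph preceding the statement. Concretely, for a finite $\tT$-CW complex $X$ whose fixed-point set $X^\tT$ has torsion-free equivariant cohomology, the restriction map $\htor^*(X)\to\htor^*(X^\tT)$ becomes an isomorphism after inverting the nonzero characters of $\tT$; in particular its kernel is a torsion $\htor^*$-module. Since every nonzero character is a nonzero linear form, hence a nonzerodivisor in the polynomial ring $\htor^*$, a torsion submodule of a torsion-free $\htor^*$-module is zero. Thus, once we know that each of $\coebs=\htor^*(\Ur/\tT)$, $\coe=\htor^*(\Ur)$, $\htor^*(\BTS_\tT(w_I))$ and $\htor^*(\BB_\tT(w_I))$ is torsion-free (indeed free) over $\htor^*$, injectivity of the four restriction maps follows; the relevant fixed-point sets $W$, $\tT$, $2^I$ and $\{J\in 2^I : w_J=1\}$ are the ones identified in the bulleted list, and each of them has torsion-free equivariant cohomology (a finite sum of copies of $\htor^*$, resp. of $\htor^*(\tT)=\htor^*\otimes\Lambda(\hat x_1,\dots,\hat x_r)$). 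One may also phrase this through $\Q$: each of the four rings embeds in its rationalization precisely because it is torsion-free, so the integral restriction map is recovered as the restriction of the rational one, which is injective by the classical (Borel) localization theorem.

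Two of the freeness statements are immediate. For $\Ur/\tT$, the additive (Chevalley) description recalled above already presents $\coebs$ as the free $\htor^*$-module on the Schubert classes $\{S_w\}_{w\in W}$ \cite{kajiflag}. For $\BTS_\tT(w_I)$, the presentation (\ref{deltaform}) writes $\htor^*(\BTS_\tT(w_I))$ as $\htor^*[\delta_1,\dots,\delta_k]$ modulo the monic quadratic relations $(\delta_t+[\alpha_{i_t}]_t)\delta_t=0$; eliminating $\delta_t^2$ for each $t$ exhibits it as a free $\htor^*$-module of rank $2^k$ with basis the squarefree monomials in the $\delta_t$. Equivalently, $\BTS_\tT(w_I)$ is an iterated $\C P^1$-bundle, so it has cells only in even degrees and the Serre spectral sequence of $\BTS_\tT(w_I)\to\BTS_{h\tT}(w_I)\to B\tT$ degenerates by parity.

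The remaining cases $\Ur$ and $\BB_\tT(w_I)$ carry odd-degree classes, so torsion-freeness is not a parity formality, and this is where the real work lies. Here I would use the Serre spectral sequence of the principal $\tT$-bundle $\tT\to\BB_{h\tT}(w_I)\to\BTS_{h\tT}(w_I)$, and of its analogue $\tT\to\Ur_{h\tT}\to(\Ur/\tT)_{h\tT}$: by Proposition \ref{serressprop} (respectively, the computation recalled in the proof that $\beta_n\in\fil^{2n}\coe$), this spectral sequence has $E_2=\choch(B_I)$ (resp. $\choch(\coebs)$), collapses at $E_3$, and has $E_\infty\cong\hoch(B_I)$ (resp. $E_\infty\cong\hoch(\coebs)=\coe$) as $\htor^*$-modules. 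Both $\hoch(B_I)$ and $\hoch(\coebs)$ are free over $\htor^*$ --- for $\hoch(B_I)$ this is Lemma \ref{tophochdeg}(a), and $\hoch(\coebs)$ is treated identically, or one cites \cite{nitubroken1,nitubroken2,webwil} --- so every graded piece of the Serre filtration is torsion-free over $\htor^*$; since that filtration is finite in each total degree (the fiber $\tT$ has cohomology only in degrees $0,\dots,r$) and torsion-freeness is preserved under extensions, $\coe$ and $\htor^*(\BB_\tT(w_I))$ are themselves torsion-free over $\htor^*$. The one point to watch --- and the main obstacle to a clean write-up --- is the direction of dependence: Lemma \ref{tophochdeg}(a) must be established without recourse to this proposition (its proof uses only the Bott--Samelson decompositions and the Eilenberg--Moore spectral sequence), so that no circularity arises; alternatively the freeness of $\hoch(B_I)$ over $\htor^*$ can be quoted directly from \cite{webwil}. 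With torsion-freeness in hand for all four cases, localization applies and yields the claimed injectivity.
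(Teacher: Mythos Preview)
Your proposal is correct and follows exactly the approach the paper indicates. The paper does not give a detailed proof of this proposition: it is stated in the appendix as review material, with the preceding paragraph asserting that injectivity ``is part of the content of the localization theorem, which applies in any situation where cohomology is free over $\htor^*$ to begin with,'' followed by the list of fixed-point sets and then the bare statement. Your write-up supplies precisely the details the paper leaves implicit---verifying freeness case by case, with the easy cases $\Ur/\tT$ and $\BTS_\tT(w_I)$ read off from the Schubert basis and the presentation~(\ref{deltaform}), and the harder cases $\Ur$ and $\BB_\tT(w_I)$ handled via the Serre spectral sequence of the principal $\tT$-bundle and the freeness of the relevant Hochschild homology. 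Your care about the direction of logical dependence is warranted and your resolution is correct: the proof of Lemma~\ref{tophochdeg}(a) proceeds entirely through the Bott--Samelson decompositions and Eilenberg--Moore collapse and does not invoke this proposition, so no circularity arises.
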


The Schubert classes can be completely characterized through the localization map. As it happens, the tuple $(h_v)_{v\in W}$ is the image of $S_w$ if the following conditions are met:
\begin{enumerate}
\item $h_v$ is homogeneous of degree $2\ell(w)$,
\item $h_v-h_{v'}$ is a multiple of the root $\alpha$ when $v=\sigma_{\alpha}v'$ and $\ell(v')<\ell(v)$,
\item $h_v=0$ if $\ell(v)<\ell(w)$, or $\ell(v)=\ell(w)$ and $v\neq w$,
\item $h_w= \prod_{v}(-\alpha)$, with product over the  $v\in W$ such that $w=\sigma_{\alpha}v$ and $\ell(v)<\ell(w)$.
\end{enumerate}
In fact, the whole image of the localization map  is given by tuples satisfying the second condition above, known as the \emph{GKM condition}. Thinking of the equivariant cohomology of $\Ur/\tT$ as this image is the GKM description. 

Now we move to the third and final description of $\coebs$. The space $(\Ur/\tT)_{h\tT}=E\Ur\times_\tT (\Ur/\tT)$ sits in a pullback diagram
$$\begin{tikzcd}
(\Ur/\tT)_{h\tT}\ar[r]\ar[d] & E\Ur\times_{\Ur} (\Ur/\tT)\ar[d] \\
E\Ur \times_{\tT} * \ar[r] & E\Ur \times_{\Ur} *,
\end{tikzcd}$$
in which top right and bottom left corners are naturally identified with $B\tT$, while the bottom right corner is naturally identified with $B\Ur$. Collapse of the Eilenberg-Moore spectral sequence for this pullback affords the \emph{polynomial} or \emph{Borel description}
\begin{align*}
\coebs &\cong \HH^*((\Ur/\tT)_{h\tT})\\&\cong \HH^*(B\tT)\otimes_{\HH^*(B\Ur)}\HH^*(B\tT)\\& \cong\Z[x_1,\dots,x_r]\otimes_{\sym} \Z[y_1,\dots,y_r],
\end{align*}
where $\sym$ denotes symmetric polynomials in $r$ variables. Elements here are represented by polynomials $f(x_1,\dots,x_r,y_1,\dots,y_r)$. Notice that the left vertical arrow in the diagram above is precisely the homomorphism inducing the left $\htor^*$-module structure on $\coebs$. This is seen in the polynomial description by $x_i$ acting as itself on the left factor. There is also a right $\htor^*$-module induced by the top arrow, which corresponds to $x_i$ acting as $y_i$ on the right factor. 

In order to relate the polynomial description to the previous two, let us introduce a couple of pieces of notation. We use abbreviation $f(\mathbf{x},\mathbf{y})=f(x_1,\dots,x_r,y_1,\dots,y_r)$ for an element of $\coebs$ in the polynomial description. Given a symmetric group element $w$, we write $w\cdot\mathbf x=(x_{w^{-1}(1)},\dots,x_{w^{-1}(r)})$. The \emph{right divided difference operator} is
$$\Delta_i f(\mathbf x,\mathbf y)=\frac{f(\mathbf x,\mathbf y)-f(\mathbf x,\sigma_i\cdot\mathbf y)}{\alpha_i},$$
with $i=1,\dots,r-1$. For a reduced word $w=\sigma_{i_1}\dots\sigma_{i_k}$ in the symmetric group, one defines $\Delta_w=\Delta_{i_1}\circ\cdots\Delta_{i_k}$.

To go between polynomial and GKM descriptions, the crucial remark is that the $w$ component of the localization map, $\iota_w^*$, is given by the formula $$\iota_w^*f(\mathbf x,\mathbf y)=f(\mathbf x,w^{-1}\cdot\mathbf x).$$
To relate the polynomial and additive descriptions one makes use of the \emph{Newton interpolation formula}
$$f(\mathbf x,\mathbf y)=\sum_{w\in W}\Delta_w f(\mathbf x,\mathbf x) S_w.$$
This interpolation formula is useful in computations of the multiplicative structure in $\coebs$. The following proposition gives some formulas obtained this way. These formulas are relevant to the normalized $\sln$ link homology for small $n$, as it appears in Chapter \ref{boreleqcohlink}.

\begin{proposition}\label{schubertappendix} The following formulas hold in $\coebs$ for $1\leq k\leq r-1$ (by convention, any Schubert class involving a subindex less than 1 or greater than $r-1$ is zero).
  \begin{align*}
  S_{k}&=\sum_{i=1}^{k} y_i-x_i\\
S_k^2&=-\alpha_k S_k+S_{k-1,k}+S_{k+1,k} \\
S_kS_{k+1}&=S_{k+1,k}+S_{k,k+1}\\
S_k^2S_{k+1}&=-\alpha_kS_{k,k+1}-(\alpha_k+\alpha_{k+1})S_{k+1,k}\\&\qquad+S_{k-1,k,k+1}+S_{k-1,k+1,k}+S_{k+2,k+1,k}+S_{k,k+1,k}\\
S_kS_{k+1}^2&=-(\alpha_k+\alpha_{k+1})S_{k,k+1}-\alpha_{k+1}S_{k+1,k }\\&\qquad+S_{k-1,k,k+1}+S_{k,k+2,k+1}+S_{k+2,k+1,k}+S_{k,k+1,k}\end{align*}

Adding over all $k$ yields formulas:
 \begin{align*}
  \sum_{k=1}^{r-1}(x_{k+1}-x_k)S_k& = \sum_{k=1}^{r-1} S_k(S_{k}-S_{k+1})  \\
  \sum_{k=1}^{r-1}(x_{k+1}^2-x_k^2)S_k& = \sum_{k=0}^{r-1} (x_{k+1}(S_{k}-S_{k+1})+S_kS_{k+1})(S_{k}-S_{k+1})
 \end{align*}
\end{proposition}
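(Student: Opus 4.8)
The plan is to compute entirely inside the polynomial (Borel) description $\coebs\cong\Z[\mathbf x]\otimes_{\sym}\Z[\mathbf y]$, where a product of Schubert classes is literally the product of polynomial representatives, and to read off the Schubert-basis expansion of any $f$ via the Newton interpolation formula $f(\mathbf x,\mathbf y)=\sum_{w\in W}\Delta_w f(\mathbf x,\mathbf x)\,S_w$ recalled in the Appendix. The first step is to pin down a polynomial representative for $S_{\sigma_k}$: I claim it is $\sum_{i=1}^{k}(y_i-x_i)$. This is either forced by the description $y_i=x_i+S_{\sigma_i}-S_{\sigma_{i-1}}$ of the right action (with $S_{\sigma_0}=0$), or checked directly against the four localization conditions characterizing $S_{\sigma_k}$ using $\iota_v^*f(\mathbf x,\mathbf y)=f(\mathbf x,v^{-1}\cdot\mathbf x)$: the element has degree $2$, it vanishes at $v=e$ and at every $v=\sigma_j$ with $j\neq k$ (because $\sigma_j$ either fixes $\{1,\dots,k\}$ pointwise or transposes a pair inside it), it equals $x_{k+1}-x_k=-\alpha_k$ at $v=\sigma_k$, and the GKM difference condition is visible term by term. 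This already gives the first displayed formula.

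\textbf{The product formulas.} For each of the remaining four identities I would multiply the relevant representatives $g_m:=\sum_{i\le m}(y_i-x_i)$ and then evaluate $\Delta_w(\text{product})(\mathbf x,\mathbf x)$ for every $w$ of length at most half the cohomological degree ($\ell(w)\le 2$ for $S_{\sigma_k}^2$ and $S_{\sigma_k}S_{\sigma_{k+1}}$; $\ell(w)\le 3$ for the two cubic identities). The set of $w$ that must actually be inspected is cut down by the standard support bound for equivariant structure constants (the coefficient of $S_w$ in $S_uS_v$ vanishes unless $w\ge u$ and $w\ge v$ in Bruhat order), so only the $w\ge\sigma_k$, resp.\ $\ge\sigma_{k+1}$, of the relevant length survive. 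The divided differences are handled by the twisted Leibniz rule $\Delta_i(fg)=(\Delta_if)\,g+(\sigma_i\cdot f)(\Delta_ig)$, with $\sigma_i$ acting on the $\mathbf y$-variables, together with the elementary facts that $\Delta_jg_m$ equals $1$ if $j=m$ and $0$ otherwise, and that $(\sigma_jg_m)|_{\mathbf y=\mathbf x}$ equals $-\alpha_m$ if $j=m$ and $0$ otherwise. Iterating this reproduces the coefficients $-\alpha_k$, $-(\alpha_k+\alpha_{k+1})$, $-\alpha_{k+1}$ on the length-$(\le 2)$ Schubert classes and the coefficient $1$ on each listed length-$3$ class, with all unlisted $w$ contributing $0$; the conventions ``$S_{\dots}=0$ whenever a subindex is outside $[1,r-1]$'' need no separate argument, since the corresponding simple reflection simply does not exist in $W=\Sigma_r$.

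\textbf{The summed formulas.} These follow formally from the individual ones. For the first, sum $S_{\sigma_k}^2=-\alpha_kS_k+S_{k-1,k}+S_{k+1,k}$ and $S_{\sigma_k}S_{\sigma_{k+1}}=S_{k+1,k}+S_{k,k+1}$ over $k=1,\dots,r-1$: then $\sum_kS_k(S_k-S_{k+1})=-\sum_k\alpha_kS_k+\sum_kS_{k-1,k}-\sum_kS_{k,k+1}$, and the last two sums are equal — both are $\sum_{j=1}^{r-2}S_{j,j+1}$, by the boundary conventions $S_{0,1}=S_{r-1,r}=0$ — so the right-hand side collapses to $-\sum_k\alpha_kS_k=\sum_k(x_{k+1}-x_k)S_k$. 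The second summed formula is the same kind of bookkeeping one degree up: sum the quadratic and the two cubic identities over $k$, rewrite $x_{k+1}^2S_k-x_k^2S_k=-\alpha_k(x_k+x_{k+1})S_k$ and expand $x_{k+1}(S_k-S_{k+1})^2+S_kS_{k+1}(S_k-S_{k+1})$ using the quadratic relations, and collect the length-$3$ Schubert classes; the ``interior'' ones again cancel in telescoping pairs, leaving the asserted identity.

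\textbf{Main obstacle.} The genuinely laborious part is the cubic case: enumerating the length-$3$ elements $w$ above $\sigma_k$ (resp.\ $\sigma_{k+1}$) and applying the twisted Leibniz rule twice in succession is bookkeeping-heavy, and one must be attentive to how the $\mathbf y$-action of $\sigma_i$ interacts with a divided difference already applied. A clean alternative, which trades this for a more mechanical computation, is to verify all five identities after the injective localization map $\coebs\hookrightarrow\prod_{w\in W}\htor^*$ of Proposition \ref{localizprop}, evaluating each side at every fixed point via known closed formulas for $\iota_v^*(S_w)$; by the translation-invariance of the statement one may even first reduce to a fixed small rank containing the indices $k-1,\dots,k+2$. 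Either route is entirely elementary, and neither is short.
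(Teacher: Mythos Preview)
Your proposal is correct and follows essentially the same route as the paper: the first five identities are obtained by applying the Newton interpolation formula $f=\sum_w\Delta_w f(\mathbf x,\mathbf x)\,S_w$ to the polynomial representatives $g_m=\sum_{i\le m}(y_i-x_i)$ and their products, and the two summed identities are then deduced by telescoping from the individual ones. The extra tools you bring in (the twisted Leibniz rule, the Bruhat support bound, and the localization alternative) are perfectly sound organizational aids, but the underlying argument is the same direct computation the paper carries out.
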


\begin{proof}
The first five formulas follow by direct application of the Newton interpolation formula. For instance, in the second formula one sees that
$$\Delta_i S_k^2=\begin{cases} y_{k+1}-y_k+2S_k,& i=k\\0,& i\neq k \end{cases} \qquad\Delta_{\sigma_j\sigma_k} S_k^2=\Delta_{j}\Delta_{k} S_k^2=\begin{cases} 1,& j=k\pm 1\\0,& j\neq k\pm 1 \end{cases}.$$
Thus the only nonzero coefficients in Newton's formula here are $\Delta_{\sigma_k}S_k^2(\mathbf x,\mathbf x)=-\alpha_k$ and $\Delta_{\sigma_{k\pm 1}\sigma_k}S_k^2(\mathbf x,\mathbf x)=1$.

The summation formulas are then obtained from the first five. For instance, using the second and third:
\begin{align*}
\textstyle\sum_k-\alpha_kS_k&=\textstyle\sum_k S_k^2 - \textstyle\sum_k (S_{k-1,k}+S_{k+1,k}) = \textstyle\sum_k S_k^2 - \textstyle\sum_k (S_{k,k+1}+S_{k+1,k}) \\& = \textstyle\sum_k S_k^2 - \textstyle\sum_k S_kS_{k+1} = \textstyle\sum_k S_k(S_k-S_{k+1})
\end{align*}
\end{proof}

\bibliographystyle{amsplain}\bibliography{bibliography}

\end{document}